\newcommand{\Alt}{\mathop{\mathrm{Alt}}}
\newcommand{\Sym}{\mathop{\mathrm{Sym}}}
\newcommand{\Aut}{\mathop{\mathrm{Aut}}}
\newcommand{\GL}{\mathop{\mathrm{GL}}}
\newcommand{\GU}{\mathop{\mathrm{GU}}}
\newcommand{\Sp}{\mathop{\mathrm{Sp}}}
\newcommand{\CSp}{\mathop{\mathrm{CSp}}}
\newcommand{\PSL}{\mathop{\mathrm{PSL}}}
\newcommand{\Hom}{\mathop{\mathrm{Hom}}}
\newcommand{\Ker}{\mathop{\mathrm{Ker}}}
\newcommand{\Gal}{\mathop{\mathrm{Gal}}}
\renewcommand{\Im}{\mathop{\mathrm{Im}}}
\renewcommand{\O}{\mathop{\mathrm{O}}}
\renewcommand{\wr}{\mathop{\mathrm{wr}}}
\newtheorem{theorem}{Theorem}[section]
\newtheorem{lemma}[theorem]{Lemma}
\newtheorem{corollary}[theorem]{Corollary}
\newtheorem{proposition}[theorem]{Proposition}
\newtheorem{conjB}[theorem]{Problem}
\newtheorem{example}[theorem]{Example}
\newtheorem{definition}[theorem]{Definition}
\def\cent#1#2{{\bf C}_{#1}(#2)}
\def\F#1{{\bf F}(#1)}
\begin{document}

\title[Coprime subdegrees]{
Coprime subdegrees for primitive permutation groups and completely
reducible linear groups}   

\author[S.~Dolfi]{Silvio Dolfi}
\address{Silvio Dolfi, Dipartimento di Matematica ``Ulisse
  Dini'', Universit\'a degli studi di Firenze,\newline Viale Morgagni 67/a, 50134 Firenze Italy}
\email{dolfi@math.unifi.it}

\author[R.~Guralnick]{Robert Guralnick}
\address{Robert Guralnick, Department of Mathematics, University of
  Southern California,\newline Los Angeles, California USA 90089--2532}
\email{guralnic@usc.edu}

\author[C. E. Praeger]{Cheryl E. Praeger}
\address{Cheryl E. Praeger, Centre for Mathematics of Symmetry and Computation,\newline
School of Mathematics and Statistics,\newline
The University of Western Australia,
 Crawley, WA 6009, Australia} \email{Cheryl.Praeger@uwa.edu.au}

\author[P. Spiga]{Pablo Spiga}
\address{Pablo Spiga, Centre for Mathematics of Symmetry and Computation,\newline
 School of Mathematics and Statistics,\newline
The University of Western Australia,
 Crawley, WA 6009, Australia} \email{pablo.spiga@uwa.edu.au}

\thanks{Address correspondence to P. Spiga,
E-mail: pablo.spiga@uwa.edu.au\\ 
The first author is supported by the MIUR project ``Teoria dei gruppi
ed applicazioni''. The second author was partially supported by NSF
grant DMS-1001962.    
The third author is supported by the ARC Federation
Fellowship Project 
FF0776186. The fourth author is supported by the University of Western Australia
as part of the  Federation Fellowship project.}

\subjclass[2000]{20B15, 20H30}
\keywords{coprime subdegrees; primitive permutation groups; completely reducible modules} 

\begin{abstract}
In this paper we answer a question of Gabriel Navarro about orbit sizes of
a finite linear group $H\subseteq \mathrm{GL}(V)$ acting completely reducibly on a
vector space $V$: if  the $H$-orbits containing the vectors $a$ and
$b$ have coprime lengths $m$ and $n$, we prove that the $H$-orbit
containing $a+b$ has length 
$mn$. Such groups $H$ are always reducible if $n,m>1$. In fact, if $H$ is an
irreducible linear group, we show that, for every pair of non-zero vectors,
their orbit lengths have a non-trivial common factor.

In the more general context of finite primitive permutation groups
$G$, we show that 
coprime non-identity subdegrees are possible if and only if $G$ is of
O'Nan-Scott 
type AS, PA or TW. In a forthcoming paper 
we will show that, for a
finite primitive permutation group, a set of pairwise coprime
subdegrees has size at most $2$.
Finally, as an application of our results, we prove that a field  has at most $2$ finite extensions of pairwise coprime indices with the same normal closure. 
\end{abstract}
\maketitle

\section{Introduction}\label{introduction}
\subsection{Completely reducible linear groups}\label{sub:1}
In this paper we are concerned with the orbit lengths of a \emph{completely
reducible} linear group and with the \emph{subdegrees} of a primitive
permutation group. Given a field $k$, a $kH$-module $V$ is said to be completely
reducible if $V$ is a direct sum of irreducible
$kH$-modules. Furthermore, the set of  subdegrees of a finite transitive
permutation group $G$ is the set of orbit lengths of  the stabilizer
$G_\omega$ of a point $\omega$.  
  
Our first main result is a positive answer to a question of Gabriel
Navarro~\cite{GN} about actions of a finite linear group. Indeed,
Navarro asked whether a finite completely reducible $kH$-module $V$
with two $H$-orbits of relatively prime lengths $m$ and $n$ 
has an orbit of size $mn$.

\begin{theorem}\label{thm1}Let $k$ be a field, let $H$ be a finite group, let $V$ be a completely reducible
  $kH$-module and let $a$ and $b$ be elements of $V$. If  the $H$-orbits 
  $a^H$ and $b^H$  have sizes $m$ and $n$, and
$m,n$ are  relatively prime, then $\cent H {a+b}=\cent Ha \cap \cent Hb$ and
the $H$-orbit $(a+b)^H$ has size $mn$.
\end{theorem}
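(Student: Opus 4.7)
The plan is to induct on $\dim_k V$ after a preliminary reduction. First I would replace $V$ by the finite-dimensional $kH$-submodule $kH\cdot a + kH\cdot b$, which remains completely reducible as a submodule of a completely reducible module. Next I observe that the containment $\cent{H}{a}\cap\cent{H}{b} \subseteq \cent{H}{a+b}$ holds trivially, and that coprimality of $m$ and $n$ forces $[H : \cent{H}{a}\cap\cent{H}{b}] = mn$ (both $m$ and $n$ divide this index, and the index is bounded above by $mn$). The theorem thus reduces to establishing the reverse containment $\cent{H}{a+b} \subseteq \cent{H}{a}\cap\cent{H}{b}$, after which the orbit-size claim follows.

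For the inductive step, suppose $V$ is reducible. Complete reducibility supplies a decomposition $V = V_1 \oplus V_2$ with $V_1, V_2$ proper (and completely reducible) $kH$-submodules. Write $a = a_1 + a_2$ and $b = b_1 + b_2$ with $a_j, b_j \in V_j$. Since the projections $V \to V_j$ are $H$-equivariant, $|a_j^H|$ divides $m$ and $|b_j^H|$ divides $n$, so $\gcd(|a_j^H|, |b_j^H|) = 1$ for $j=1,2$. The inductive hypothesis applied in each $V_j$ gives $\cent{H}{a_j + b_j} = \cent{H}{a_j}\cap\cent{H}{b_j}$, and combining through the direct sum yields
\[
\cent{H}{a+b} = \cent{H}{a_1+b_1}\cap\cent{H}{a_2+b_2} = \bigcap_{j=1,2}\bigl(\cent{H}{a_j}\cap\cent{H}{b_j}\bigr) = \cent{H}{a}\cap\cent{H}{b}.
\]

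For the base case, $V$ irreducible, I would invoke the companion result for irreducible linear groups announced in the abstract: in an irreducible $kH$-module, any two non-zero vectors have orbit lengths with a non-trivial common factor. With $\gcd(m,n)=1$, this forces at least one of $a,b$ to be zero; the conclusion is then immediate (if $a=0$, then $\cent{H}{a+b} = \cent{H}{b} = \cent{H}{a}\cap\cent{H}{b}$ and $|(a+b)^H| = n = mn$).

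The main obstacle is precisely this irreducible base case, which is essentially the companion ``common factor'' theorem for irreducible linear groups. That result is the technical heart; its proof presumably requires substantial machinery, likely including the classification of finite simple groups and a case-by-case analysis of irreducible finite linear groups (for instance via Aschbacher's subgroup classes and the structure of quasisimple overgroups in $\GL(V)$). The reduction from completely reducible to irreducible via complete reducibility, by contrast, is an essentially formal direct-sum computation.
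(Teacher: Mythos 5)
Your proposal is correct and follows essentially the same route as the paper: both arguments reduce Theorem~\ref{thm1} to Theorem~\ref{thm2} by projecting $a$ and $b$ into irreducible constituents and using that the orbit lengths of the projections divide $m$ and $n$; the paper merely organizes this not as an induction on dimension but by first splitting off $\cent VH$ and then showing directly that the submodules $V(a)$ and $V(b)$ generated by $a$ and $b$ intersect trivially (choosing a simple submodule $S$ of $V(a)\cap V(b)$ and applying Theorem~\ref{thm2} to the nonzero projections of $a$ and $b$ onto $S$), which immediately yields $\cent H{a+b}=\cent Ha\cap\cent Hb$. The one point to tidy in your base case is the trivial irreducible module, where Theorem~\ref{thm2} does not apply and neither $a$ nor $b$ need be zero, but the conclusion is immediate since then $m=n=1$.
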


We note that Theorem~\ref{thm1} explicitly exhibits an $H$-orbit of size
$mn$, namely the orbit containing $a+b$. Furthermore,
Example~\ref{Exa1} shows that the ``completely reducible'' hypothesis 
in Theorem~\ref{thm1} is essential. Martin Isaacs~\cite[Theorem]{Isaacs3} has proved a similar result 
under stronger arithmetical conditions on $m$, $n$ and the characteristic of $k$.

Our second main result is a somehow remarkable theorem  (in our opinion)
on irreducible linear groups. Theorem~\ref{thm2} shows that
the groups arising in Theorem~\ref{thm1} are always reducible if $n,m>1$.

\begin{theorem}\label{thm2}Let $k$ be a field, let $H$ be a finite group, let $V$ be a non-trivial irreducible
  $kH$-module and let
  $a$ and $b$ be in $V\setminus\{0\}$. Then the sizes of the
  $H$-orbits $a^H$ and  $b^H$ have a non-trivial common factor.  
\end{theorem}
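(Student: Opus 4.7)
I argue by contradiction. Suppose $V$ is an irreducible $kH$-module and $a,b\in V\setminus\{0\}$ have $H$-orbits of coprime sizes $m,n>1$, and set $A:=\cent H a$, $B:=\cent H b$. Theorem~\ref{thm1} gives at once $H=AB$, $\cent H{a+b}=A\cap B$, and $|H{:}A\cap B|=mn$. Applied to the pair $(ca,b)$ for any $c\in k^\times$ (orbit sizes unchanged and still coprime), Theorem~\ref{thm1} also yields $\cent H{ca+b}=A\cap B$. Hence on the plane $P:=\mathrm{span}_k(a,b)$ (which is genuinely $2$-dimensional, since $b\in k^\times a$ would force $m=n$ and then coprimality gives $m=n=1$) every nonzero element has stabiliser either $A$ on $k^\times a$, $B$ on $k^\times b$, or $A\cap B$ elsewhere.

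Next I bring in irreducibility. Replacing $H$ by its image in $\GL(V)$ I may assume $V$ is faithful. The core $\bigcap_{g\in H}A^g$ fixes every element of $a^H$, hence (by irreducibility of $V$) fixes $\mathrm{span}(a^H)=V$ pointwise, so it is trivial. Thus $H$ embeds into $\Sym(a^H)\cong\Sym_m$, and symmetrically into $\Sym_n$, forcing $|H|$ to divide $\gcd(m!,n!)=\min(m,n)!$. Since $V$ is a nontrivial irreducible $kH$-module, $V^H=0$, so $\sum_{u\in a^H}u=0$ in $V$ (and similarly for $b^H$ and $(a+b)^H$). A finer structural consequence: because $H=AB$, the subgroup $B$ acts transitively on $H/A\cong a^H$, so $V$ is a cyclic $kB$-module generated by $a$ with stabiliser $A\cap B$; equivalently, $V$ is a $kB$-quotient of the $m$-dimensional permutation module $k[B/(A\cap B)]$, and symmetrically an $A$-quotient of $k[A/(A\cap B)]$.

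If $A$ and $B$ are $H$-conjugate, then $m=n$, and coprimality forces $m=n=1$, a contradiction. So $A$ and $B$ lie in distinct $H$-conjugacy classes. The crux of the proof is now to derive a contradiction from the conjunction of the following facts: $V$ is faithful irreducible, $H=AB$ is an exact factorisation with coprime indices $m,n>1$, and $A$ and $B$ are non-conjugate in $H$. My plan is to proceed by induction on $|H|$ via a minimal counterexample. Applying Clifford theory to a minimal normal subgroup $N\lhd H$ reduces to the quasiprimitive case (any non-homogeneous decomposition of $V|_N$ interacts with the rigid stabiliser structure on $P$ noted above to yield either a direct contradiction or a smaller counterexample). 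In the quasiprimitive case I would appeal to the classification of exact factorisations of finite groups with coprime indices (of Wielandt--Kegel type, together with the Liebeck--Praeger--Saxl results on factorisations of almost-simple groups) and, for each admissible configuration, use Frobenius reciprocity to locate $V$ as a common irreducible constituent of the permutation modules $k[H/A]$ and $k[H/B]$, then argue via character-theoretic or composition-factor considerations that no such nontrivial common constituent can exist.

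The principal obstacle is this final classification-based step: a case-by-case representation-theoretic analysis of the exact factorisations with coprime indices is genuinely substantial. Secondary technical points are the reduction from an arbitrary ground field $k$ to the absolutely irreducible setting via base-change to $\bar k$ with Galois descent, and the treatment of positive characteristics dividing $|H|$, where permutation modules are not semisimple and Frobenius reciprocity must be reformulated in terms of composition multiplicities rather than direct-summand decompositions.
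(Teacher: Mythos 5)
There is a genuine gap, and in fact the central step of your argument is missing. You correctly observe that coprimality of $m=|H:A|$ and $n=|H:B|$ forces $H=AB$, and your reduction plan (minimal counterexample, Clifford theory applied to a minimal normal subgroup) matches the paper's reduction to the simple case. But everything after that is a plan rather than a proof, and you yourself flag the ``principal obstacle'': a case-by-case representation-theoretic analysis of the coprime factorizations. The paper avoids exactly this. Its key idea is a duality argument: for any factorization $H=AB$, Frobenius reciprocity and the transitivity of $A$ on $H/B$ give $\dim\Hom_{kH}(k_A^H,k_B^H)=1$, and the unique homomorphism has trivial image; hence $A$ cannot fix a non-zero vector of $V$ while $B$ fixes a non-zero vector of $V^*$ (Lemma~\ref{lemma1}). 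This is converted into a statement about $V$ alone by showing $V^*\cong V^x$ for some $x\in\Aut(H)$ whenever every element of $H$ is conjugate to its inverse via an automorphism (Lemmas~\ref{GTlemma} and~\ref{idea}), a property verified for all classical groups via a Lang--Steinberg argument (Theorem~\ref{thm8}) and obvious for alternating groups. After consulting Table~\ref{table}, only $M_{11}$, $M_{23}$, $M_{24}$ survive, and these are dispatched with Lemma~\ref{exact} and explicit permutation-module computations. None of this machinery appears in your proposal, so the contradiction is never actually derived.

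Two further concrete errors. First, you invoke Theorem~\ref{thm1} to get $\cent H{a+b}=A\cap B$ and the stabiliser structure on the plane $\mathrm{span}_k(a,b)$; but the paper proves Theorem~\ref{thm1} \emph{from} Theorem~\ref{thm2}, so this is circular (the fact $H=AB$ that you actually need follows directly from coprimality of the indices and needs no appeal to Theorem~\ref{thm1}). Second, you assert that $H=AB$ with coprime indices is an \emph{exact} factorization ($A\cap B=1$). This is false: one only gets $|H:A\cap B|=mn$, and $A\cap B$ can be large --- e.g.\ $M_{11}=L_2(11)\cdot M_{10}$ is a coprime factorization with $A\cap B\cong\Alt(5)$. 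The paper's Lemma~\ref{exact} does exploit exactness, but only for carefully chosen subgroups $S$ (such as a Sylow subgroup inside one of the factors) for which $H=S\,\cent H b$ with $S\cap\cent H b=1$ can be verified; your proposed reduction to a ``classification of exact factorisations with coprime indices'' therefore does not apply to the configurations that actually arise.
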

The strategy for proving Theorem~\ref{thm2} is to reduce the problem 
inductively to the case where $H$ is a non-abelian simple group
which admits a maximal 
factorization 
$H=AB$ with $|H:A|$ relatively prime to $|H:B|$. Table~\ref{table}
contains all such triples $(H,A,B)$. In the case where $H$ is a
sporadic simple group, we use the
information in Table~\ref{table} for the proof of
Theorem~\ref{thm2}. Furthermore, we observe that as a consequence of
Theorem~\ref{thm2}, if $(H,A,B)$ is one of the triples in
Table~\ref{table}, then there are no irreducible representations of
$H$ with $A$ and $B$ vector stabilizers.     

A direct application of Theorem~\ref{thm2} gives the following corollary.

\begin{corollary}\label{cornew}Let $k$ be a field, let $H$ be a finite group, let $V$ be a non-trivial finite dimensional $kH$-module  and let $a$ and $b$ be elements of $V$. If both $H$-orbits $a^H$ and $b^H$ span $V$, then $|a^H|$ and $|b^H|$ have a non-trivial common factor. 
\end{corollary}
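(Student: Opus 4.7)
My plan is to derive Corollary~\ref{cornew} from Theorem~\ref{thm2} by passing to a suitably chosen irreducible quotient of $V$, with an induction on $\dim V$ handling the remaining ``local'' case.

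Suppose for contradiction that $\gcd(|a^H|,|b^H|)=1$. Since $V\neq 0$ is finite-dimensional, it has a maximal $kH$-submodule $W$, with $V/W$ irreducible; let $\pi\colon V\to V/W$ be the projection. Because $a^H$ spans $V$, the image $\pi(a)^H=\pi(a^H)$ spans $V/W\neq 0$, so $\pi(a)\neq 0$; likewise $\pi(b)\neq 0$. Moreover $\cent H a\subseteq\cent H{\pi(a)}$ and $\cent H b\subseteq\cent H{\pi(b)}$ force $|\pi(a)^H|$ to divide $|a^H|$ and $|\pi(b)^H|$ to divide $|b^H|$. If $W$ can be chosen so that $V/W$ is a \emph{non-trivial} $kH$-module, then Theorem~\ref{thm2} applied to $\pi(a),\pi(b)\in V/W$ yields a non-trivial common factor of $|\pi(a)^H|$ and $|\pi(b)^H|$, hence of $|a^H|$ and $|b^H|$, contradicting coprimality.

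Hence we may assume every simple quotient of $V$ is trivial, so $V/\mathrm{rad}(V)$ is a direct sum of trivial modules; the singleton $\pi(a^H)=\{\pi(a)\}$ can span it only if $V/\mathrm{rad}(V)$ is one-dimensional. Thus $V$ is a cyclic local $kH$-module with unique maximal submodule $M:=\mathrm{rad}(V)$ and trivial quotient $V/M$. The chief technical hurdle is closing this local case, which I would do by induction on $\dim V$.

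If $M$ has a proper non-zero $kH$-submodule $M_0$, the quotient $V/M_0$ has strictly smaller dimension than $V$ and is non-trivial as a $kH$-module (the unique maximal trivial quotient of $V$ is $V/M$, and $M\not\subseteq M_0$); the images of $a^H$ and $b^H$ still span $V/M_0$, so the inductive hypothesis applied to $V/M_0$ yields a common prime factor that divides $|a^H|$ and $|b^H|$. If instead $M$ is simple, there are two sub-cases. When $M$ is the trivial $kH$-module, $\dim V=2$ and a direct calculation with the group homomorphism $\delta\colon H\to(M,+)$, $\delta(h)=hv_0-v_0$ (for fixed $v_0\in V\setminus M$), yields $|a^H|=|b^H|=|\delta(H)|>1$. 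When $M$ is a non-trivial irreducible $kH$-module, choose $h_0\in H\setminus\cent H a$ and $h_1\in H\setminus\cent H b$ (which exist since $|a^H|,|b^H|\ge 2$); the non-zero elements $x:=h_0a-a$ and $y:=h_1b-b$ of $M$ have $H$-orbits spanning $M$ by simplicity of $M$, and Theorem~\ref{thm2} applied to $M$ supplies a common prime factor of $|x^H|$ and $|y^H|$. The inclusion $\cent H a\cap h_0\cent H a h_0^{-1}\subseteq \cent H x$ implies $|x^H|$ divides $|a^H|^2$, so every prime dividing $|x^H|$ also divides $|a^H|$, and similarly for $y$ and $b$. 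In every case we obtain the desired common prime factor of $|a^H|$ and $|b^H|$.
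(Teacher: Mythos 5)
Your reduction to the ``local'' case is sound and runs parallel to the paper's own induction: the paper also reduces modulo a submodule (it uses a minimal submodule $W$ and induction on $\dim V$, you use maximal submodules and the radical), and in both treatments the only case needing real work is a non-split extension of the trivial module by an irreducible module. Your handling of the sub-case where $M=\mathrm{rad}(V)$ is trivial is also fine. The problem is the last sub-case, where $M$ is simple and non-trivial.

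There the step ``the inclusion $\cent Ha\cap h_0\cent Hah_0^{-1}\subseteq \cent Hx$ implies $|x^H|$ divides $|a^H|^2$, so every prime dividing $|x^H|$ also divides $|a^H|$'' is a genuine gap. What the inclusion gives is that $|x^H|$ divides $|H:\cent Ha\cap \cent H{a^{h_0}}|=|a^H|\cdot|\cent Ha:\cent Ha\cap\cent H{a^{h_0}}|$, and the second factor divides $|\cent Ha|$, not $|a^H|$. The general arithmetic fact you are invoking is false: for subgroups $A,B\le H$ the index $|H:A\cap B|$ need not divide $|H:A|\,|H:B|$, nor need its prime divisors all divide $|H:A|\,|H:B|$ (take $H=\Sym(3)$ and $A,B$ two distinct subgroups of order $2$: then $|H:A\cap B|=6$ while $|H:A|\,|H:B|=9$). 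So the common prime supplied by Theorem~\ref{thm2} for $|x^H|$ and $|y^H|$ could a priori divide $|\cent Ha|$ and $|\cent Hb|$ rather than $|a^H|$ and $|b^H|$, and no contradiction with $\gcd(|a^H|,|b^H|)=1$ follows. The paper closes exactly this case by a different device, and it is the one place where the coprimality hypothesis is used structurally rather than just at the end: since $\gcd(|a^H|,|b^H|)=1$, one of the two stabilizers, say $\cent Ha$, contains a Sylow $p$-subgroup $P$ of $H$ ($p=\mathrm{char}\,k$; the characteristic-$0$ case is semisimple). The extension $0\to M\to V\to k\to 0$ splits over $P$ because $P$ fixes $a\notin M$, and the restriction $H^1(H,M^*)\to H^1(P,M^*)$ is injective, so the extension splits over $H$ --- contradicting locality, or equivalently allowing one to write $a=\lambda v+a'$, $b=\mu v+b'$ with $v$ fixed and $a',b'\in M\setminus\{0\}$, whence $\cent Ha=\cent H{a'}$, $\cent Hb=\cent H{b'}$ and Theorem~\ref{thm2} applies directly to $M$. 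Substituting this splitting argument for your commutator computation repairs the proof.
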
 

In the same direction as Corollary~\ref{cornew}, in the case of $p$-soluble groups, we prove the following theorem.

\begin{theorem}\label{thm:101}
Let $k$ be a field of characteristic $p\ge 0$, $H$ a
$p$-soluble finite group, $V$ a  $kH$-module and $a\in V$ fixed by a Sylow
$p$-subgroup of $H$ and with the $H$-orbit
$a^H$ spanning $V$. Then
\begin{enumerate}
\item[(a)] $\dim \cent VH \le 1$; and
\item[(b)] if $b\in V$ and  $\gcd(|a^H|,|b^H|)=1$, then $b \in C_V(H)$.
\end{enumerate}
\end{theorem}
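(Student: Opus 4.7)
The plan is to prove parts (a) and (b) simultaneously by induction on $|H|$. Write $N:=\O_{p'}(H)$ and fix a Sylow $p$-subgroup $P$ of $H$ with $P\subseteq \cent Ha$. The first reduction is to the case $\O_p(H)=1$: since $\O_p(H)$ is a normal $p$-subgroup, it lies inside $P$ and hence inside $\cent Ha$; by normality $\O_p(H)\subseteq \cent H{a^g}$ for every $g\in H$, so $\O_p(H)$ fixes the spanning set $a^H$ pointwise and therefore acts trivially on $V$. Passing to $H/\O_p(H)$ preserves both hypotheses while strictly decreasing $|H|$, so induction applies. When $\O_p(H)=1$ and $H\ne 1$, $p$-solubility forces $N\ne 1$.

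Since $\gcd(|N|,p)=1$, the $kN$-module $V|_N$ is semisimple, and we decompose $V=U_1\oplus V'$ as $kH$-modules with $U_1=\cent VN$ and $V'$ the sum of the nontrivial $N$-isotypic components (the splitting is $H$-invariant because $N\triangleleft H$). Let $a=a_0+a_1$ and $b=b_0+b_1$ be the corresponding decompositions. Since the projections are $H$-equivariant, $a_0$ is $P$-fixed and $a_0^H$ spans $U_1$. As $\cent VH\subseteq \cent VN=U_1$, we get $\cent VH=\cent{U_1}{H/N}$, and the inductive hypothesis of (a) applied to the smaller group $H/N$ acting on $U_1$ gives $\dim\cent VH\le 1$, proving (a). For (b), the orbit sizes $|a_0^H|$ and $|b_0^H|$ divide $m$ and $n$ respectively, so remain coprime; the inductive hypothesis of (b) applied to $(H/N,U_1,a_0,b_0)$ places $b_0\in\cent{U_1}{H/N}=\cent VH$. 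It remains to show $b_1=0$.

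The main obstacle is the sub-case $\cent VN=0$ (so $U_1=0$, $\cent VH=0$, and one must show $b=0$); a naive induction on $\dim V$ applied to $(H,V',a_1,b_1)$ immediately lands us back here because $\cent{V'}N=V'\cap\cent VN=0$. Suppose for contradiction $b\ne 0$; then $\cent VN=0$ forces $|a^N|,|b^N|\ge 2$, and since $|a^N|$ divides $m$ and $|b^N|$ divides $n$, coprimality of $m$ and $n$ yields $\gcd(|a^N|,|b^N|)=1$. Applying Theorem~\ref{thm1} to the completely reducible $kN$-module $V|_N$ with the pair $(a,b)$ gives $\cent N{a+b}=\cent Na\cap\cent Nb$, $|(a+b)^N|=|a^N|\cdot|b^N|$, and the factorisation $N=\cent Na\cdot\cent Nb$. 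The hardest step is to combine these $N$-orbit identities with the cyclic structure $V=\langle a^H\rangle$, the $P$-fixedness of $a$, and the $p$-soluble structure of $H/N$ (where $\O_{p'}(H/N)=1$ forces $\O_p(H/N)\ne 1$, yielding a nontrivial normal $p$-layer $Q/N\triangleleft H/N$) to derive a contradiction; I expect this to involve tracking the $Q$-action on $V$ against the $P$-fixedness of $a$ to force $b\in\cent VN=0$, completing (b).
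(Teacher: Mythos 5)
Your argument for part (a) is correct and complete: the reduction modulo $\O_p(H)$, the passage to $N=\O_{p'}(H)\neq 1$, the $H$-invariant splitting $V=\cent VN\oplus V'$ and the induction on $|H|$ applied to $H/N$ acting on $\cent VN$ all work. But part (b) has a genuine gap, and it sits exactly at the essential point of the theorem. Your own reduction shows that everything comes down to the case $\cent VN=0$, where you must prove $b=0$; there your induction on $|H|$ gives nothing (the group does not shrink) and, as you note, induction on $\dim V$ loops. What you extract from Theorem~\ref{thm1} applied to $V|_N$ --- namely $N=\cent Na\,\cent Nb$ and $V_N(a)\cap V_N(b)=0$, where $V_N(\cdot)$ denotes the $kN$-span of an $N$-orbit --- is not enough: only $a^H$ spans $V$, not $a^N$, so $V_N(a)$ may be a proper submodule and $V_N(b)$ need not vanish. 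The proposed finish via $Q/N=\O_p(H/N)$ is only a hope; nothing in the hypotheses ties $b$ to any $p$-subgroup (the Sylow condition is imposed on $a$ alone), so ``tracking the $Q$-action'' has no obvious purchase on $b$. In short, the hardest half of (b) is asserted, not proved. A secondary issue: invoking Theorem~\ref{thm1} imports Theorem~\ref{thm2} and hence the classification, whereas the point of Theorem~\ref{thm:101} (as stated in the introduction) is to be classification-free and to give an independent proof of Theorem~\ref{thm2} for $p$-soluble groups.

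For comparison, the paper's proof is not an induction at all. Set $A=\cent Ha$; since $A$ contains a Sylow $p$-subgroup, $|H:A|$ is a $p'$-number, so $H=AL$ for any Hall $p'$-subgroup $L$ of $H$ (which exists by $p$-solubility). The averaging operator $\psi_L(v)=\sum_{x\in L}v^x$ has image exactly $\cent VL$, and because $a^H=a^{AL}=a^L$ and $a^H$ spans $V$, every vector in $\psi_L(V)$ is $H$-fixed; hence $\cent VL=\cent VH$. Part (a) follows because $V$ is a cyclic $kL$-module generated by $a$, hence a direct summand of $kL$, which has a one-dimensional fixed space. For part (b) one takes $L$ to be a Hall $p'$-subgroup of $B=\cent Hb$: since $A\cap B$ contains a Sylow $p$-subgroup of $B$, one still has $H=AL$, and then $b\in\cent VB\subseteq\cent VL=\cent VH$. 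If you want to salvage your set-up, this choice of $L$ inside $\cent Hb$ is precisely the ingredient your case $\cent VN=0$ is missing: it yields $b\in\cent VH\subseteq\cent VN=0$ at once.
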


Here, by abuse of notation, ``$0$-soluble finite group'' means ``finite group'' and  a ``Sylow $0$-subgroup" is the ``identity subgroup''. In the proof of Theorem~\ref{thm:101} we do not make use  of the Classification of the Finite Simple Groups. Moreover, since in an irreducible $kH$-module $V$ every non-trivial $H$-orbit spans $V$, we see that Theorem~\ref{thm:101}~(b) generalizes (for the class of $p$-soluble groups) Theorem~\ref{thm2} and, in particular, offers an independent and more elementary proof. Note that if $p$ does not divide the order of $H$ (including the case $p=0$),  then $H$ is $p$-soluble, the Sylow $p$-subgroups of $H$ are trivial and, in particular, Theorem~\ref{thm:101} applies in this situation.

\subsection{Coprime subdegrees in primitive permutation groups}\label{sub:2}
In the more general context of finite primitive permutation groups
$G$, we investigate 
coprime subdegrees according to the O'Nan-Scott type of $G$. (We say
that a subdegree $d$ of $G$ is \emph{non-trivial} if $d\neq 1$.) In
particular, Theorem~\ref{thm2} yields that the primitive permutation
group $G=V\rtimes H$ acting on $V$ has no pair of non-trivial coprime
subdegrees. One of the most important modern methods for analyzing a
finite 
primitive permutation group $G$ is to study the \emph{socle} $N$ of
$G$, that is, the subgroup generated by the minimal normal subgroups
of $G$. The socle of an arbitrary finite group is isomorphic to a direct product of simple groups, and, for finite primitive
groups these simple groups are pairwise isomorphic. The O'Nan-Scott
theorem describes in detail the embedding of $N$ in $G$ and collects
some useful information on the action of $N$. In~\cite{Pr} eight types
of primitive groups are defined (depending on the structure and on the
action of the socle), namely HA (\textit{Holomorphic Abelian}), AS
(\textit{Almost Simple}), SD
(\textit{Simple Diagonal}), CD
(\textit{Compound Diagonal}), HS
(\textit{Holomorphic Simple}), HC
(\textit{Holomorphic Compound}), TW 
(\textit{Twisted wreath}), PA
(\textit{Product Action}), and it is shown in~\cite{LPS1} that every primitive group
belongs to exactly one of these types. 

\begin{theorem}\label{thm3}
Let $G$ be a finite primitive permutation group. If $G$ has two non-trivial coprime
subdegrees, then $G$ is of AS, PA or TW type. Moreover, for each
of the O'Nan-Scott types AS, PA and TW, there exists a primitive group of
this type with
two non-trivial coprime subdegrees. 
\end{theorem}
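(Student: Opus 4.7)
My plan is to analyse the eight O'Nan--Scott types one by one, ruling out HA, SD, CD, HS and HC, and then constructing explicit examples for AS, PA and TW. Type HA is the most direct: there $G=V\rtimes H$ with socle $V$ elementary abelian of prime order $p$, $H\le \GL(V)$ acting irreducibly, and the point stabilizer equal to $H$. The nontrivial subdegrees are exactly the lengths of the $H$-orbits on $V\setminus\{0\}$, and Theorem~\ref{thm2} applied to the irreducible $\mathbb{F}_pH$-module $V$ yields that any two such lengths share a prime divisor, contradicting coprimality.

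For SD (with $k\ge 2$) I identify $\Omega$ with $T^{k-1}$ in the standard way so that $N_\alpha=\{(t,\dots,t):t\in T\}$ acts on a tuple $(x_1,\dots,x_{k-1})$ by simultaneous conjugation; the orbit has size $|T : \bigcap_{i} C_T(x_i)|$, which is divisible by the nontrivial conjugacy class size $|x_i^T|$ whenever some $x_i\neq 1$. Since $N_\alpha\trianglelefteq G_\alpha$, the $N_\alpha$-orbits inside a single $G_\alpha$-orbit are $G_\alpha$-conjugate and hence have equal size, so every nontrivial subdegree of $G$ is a multiple of a nontrivial $T$-conjugacy class size. For HS the analogous identification $\Omega\cong T$ makes $N_\alpha$ act by conjugation and its orbits are the $T$-conjugacy classes. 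The compound types CD and HC reduce to SD and HS respectively through the product-action identification $\Omega\cong\Delta^\ell$: the $N_\alpha$-orbits are products of SD- or HS-orbits and so are again divisible by a nontrivial $T$-class size. To conclude, I invoke the Classification-based theorem that any two nontrivial conjugacy classes of a finite non-abelian simple group have sizes with a common prime divisor; this forces any two nontrivial subdegrees of $G$ in these four types to share a prime, contradicting coprimality.

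For the ``moreover'' statement I exhibit one primitive group of each of types AS, PA, TW with a pair of nontrivial coprime subdegrees. For AS I take a small almost simple group acting primitively on a suitable coset space, with candidates suggested by the coprime-index factorisations recorded in Table~\ref{table}. A PA example then follows from a product-action construction on $\Omega^\ell$ based on an AS example with coprime subdegrees $m$ and $n$: choosing $\ell\ge 2$ coprime to $n$, the $H_\delta\wr S_\ell$-orbits of $(\eta,\delta,\dots,\delta)$ and $(\zeta,\dots,\zeta)$, where $\eta$ lies in an $m$-orbit and $\zeta$ in an $n$-orbit of $H_\delta$, have sizes $\ell m$ and $n^\ell$, and $\gcd(\ell m,n^\ell)=\gcd(\ell,n^\ell)=1$. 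A TW example, rarer, is built from a twisted wreath product based on a carefully chosen simple group $T$ and a compatible twisting admitting two coprime orbit lengths on the regular socle.

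The main obstacle in the forbidden direction is the class-size input for non-abelian simple groups---that any two nontrivial conjugacy classes have sizes with a common prime factor---since its proof relies on the Classification and a case-by-case analysis across alternating, classical, exceptional Lie type and sporadic groups. The reductions CD$\to$SD and HC$\to$HS are conceptually clear but need some bookkeeping on the structure of $N_\alpha$-orbits in the product action. A secondary obstacle is producing a concrete TW example, since twisted wreath products are rigid and few natural candidates yield coprime orbit lengths.
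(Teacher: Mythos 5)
The ruling-out half of your argument is essentially the paper's. Your reduction for types SD, CD, HS and HC --- that every non-trivial subdegree is a multiple of a non-trivial conjugacy class size of $T$, via the $N_\alpha$-orbit sizes $|T:\bigcap_i \cent T{x_i}|$ --- matches the paper's computation, and the ``Classification-based theorem'' you invoke (two non-trivial classes of a non-abelian simple group have non-coprime sizes) is exactly the Fisman--Arad resolution of Szep's conjecture, which is the paper's Theorem~\ref{szep}: coprime class sizes would force $T=\cent Tx\cent Ty$ with both factors having non-trivial centre. The HA case via Theorem~\ref{thm2} is also what the paper does. So that direction is sound and not genuinely different in substance.

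The genuine gap is in the ``moreover'' clause, which requires you to actually exhibit a group of each of the types AS, PA and TW with two non-trivial coprime subdegrees; you only produce a construction for PA, and it is conditional on an AS example that you never pin down. Your heuristic for AS is moreover off-target: a pair of coprime non-trivial subdegrees of $G$ acting on $G/A$ amounts to a coprime factorization of the \emph{point stabilizer} $A$ by two intersections $A\cap A^{g_1}$ and $A\cap A^{g_2}$, not to a coprime factorization of $G$ itself, so Table~\ref{table} (maximal coprime factorizations of simple groups $G$) does not directly supply candidates. The paper instead exhibits $J_1$ of degree $266$ with subdegrees $1,11,12,110,132$, and the infinite family $\PSL(2,p)$ on the cosets of a maximal $\Alt(5)$ with coprime subdegrees $5$ and $12$. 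For TW you offer only a statement of intent, and you correctly identify this as an obstacle: it is the hardest part of the existence claim, and the paper's Example~\ref{exa4} constructs an explicit twisted wreath product from $H=\PSL(2,7)^2\rtimes\langle\iota\rangle$ with two explicitly defined points of the socle whose stabilizers in $H$ have coprime indices $576=24^2$ and $49=7^2$. Without concrete AS and TW examples the second half of the theorem remains unproved.
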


It is possible, for a single primitive group to have several different pairs of non-trivial coprime subdegrees. We give a construction of groups of PA type with this property in Example~\ref{exa3}. However, in the case of primitive groups of TW type, it is not possible to have as many as three pairwise coprime non-trivial subdegrees.

\begin{theorem}\label{TW}
For a finite primitive permutation group of TW type, the maximal size of a set of pairwise coprime non-trivial subdegrees is at most $2$. 
\end{theorem}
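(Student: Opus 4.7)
Since $G$ is of TW type, its socle $N=T^k$ (where $T$ is non-abelian simple and $k\ge 2$) is regular, so identifying the point set of $G$ with $N$ turns the subdegrees of $G$ into the sizes $|n^H|$ of the $H$-conjugation orbits on $N\setminus\{1\}$, where $H=G_\omega$. Let $\Omega=\{1,\dots,k\}$ index the simple direct factors of $N$ and let $\rho\colon H\to\Sym(\Omega)$ be the induced (transitive) action. For $n\in N\setminus\{1\}$ with support $S=\mathrm{supp}(n)\subseteq\Omega$, the centralizer $\cent Hn$ is contained in the setwise stabilizer $H_S$ of $S$, so $|H:H_S|$ divides $|n^H|$. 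The plan is to realise each $|H:H_S|$ as an $H$-orbit size in a linear module and to invoke Theorem~\ref{thm2}.

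Consider the permutation module $V=\mathbb{Q}[\Omega]$ of $H$ (completely reducible by Maschke's theorem) and its augmentation submodule $V_0=\{\sum a_ie_i:\sum a_i=0\}$. For every proper non-empty $S\subseteq\Omega$, the projection $v'_S=\sum_{i\in S}e_i-(|S|/k)\sum_{i\in\Omega}e_i$ is a non-zero element of $V_0$ whose $H$-orbit has size $|H:H_S|$. When $\rho(H)$ is $2$-transitive on $\Omega$, the module $V_0$ is $H$-irreducible (and non-trivial, since it is fixed-point-free once $k\ge 2$), so Theorem~\ref{thm2} says that any two non-zero $H$-orbits on $V_0$ share a non-trivial common factor. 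Suppose now, aiming at a contradiction, that $d_1,d_2,d_3$ are pairwise coprime non-trivial subdegrees with representatives $n_1,n_2,n_3$ and respective supports $S_i$. If two of the $n_i$, say $n_i$ and $n_j$, had proper supports, then $|H:H_{S_i}|,|H:H_{S_j}|>1$ would divide the coprime numbers $d_i,d_j$ and therefore be coprime, yet Theorem~\ref{thm2} would force them to share a common factor --- a contradiction. Hence at most one of $n_1,n_2,n_3$ has proper support. When $\rho(H)$ is not $2$-transitive, decompose $V_0$ into $H$-irreducible summands and apply the same reasoning inside an irreducible constituent on which two of the $v'_{S_i}$ project non-trivially.

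We may therefore assume that at least two of the $n_i$, say $n_2$ and $n_3$, have full support $\Omega$, and write $n_2=(t_1,\dots,t_k)$ and $n_3=(u_1,\dots,u_k)$ with every $t_i,u_i\neq 1$. The remaining task is to show that $\gcd(d_2,d_3)>1$, contradicting the coprimality of $d_2$ and $d_3$. To do this we exploit the twisted wreath structure: let $K=\ker\rho\trianglelefteq H$, which acts on each factor by $\psi_i\colon K\to\Aut(T_i)$; let $Q=H_1$ be the stabilizer of the first factor; and let $\phi_1\colon Q\to\Aut(T_1)$ be the defining homomorphism of the twisted wreath product. Using the factorization $|n^H|=[\rho(H):\rho(\cent Hn)]\cdot[K:\cent Kn]$ together with the constraint $\cent Kn\le\bigcap_i\psi_i^{-1}(\cent{\Aut(T_i)}{t_i})$, one extracts a common prime factor from $d_2$ and $d_3$.

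The principal obstacle is precisely this final full-support step: the support-stabilizer bound yields no information once $S=\Omega$, so one must draw on the detailed TW structure --- in particular the homomorphism $\phi_1$ and the faithful conjugation action of $H$ on $N$ --- to produce the required common prime factor of $d_2$ and $d_3$ from the orbit behaviour of $K$ and $Q$ on the individual coordinates $t_i,u_i\in T$.
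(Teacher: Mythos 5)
Your argument has two gaps, and the second one is fatal. First, in the reduction to full support: when $\rho(H)$ is not $2$-transitive you propose to apply Theorem~\ref{thm2} ``inside an irreducible constituent on which two of the $v'_{S_i}$ project non-trivially,'' but such a common constituent need not exist. The submodules of $V_0$ generated by $v'_{S_i}$ and $v'_{S_j}$ may intersect trivially, and in a completely reducible but reducible module two non-zero vectors can perfectly well have coprime non-trivial orbit lengths (this is exactly the situation of Theorem~\ref{thm1}), so no contradiction is available from Theorem~\ref{thm2} there. Second, and decisively, the full-support case is not proved at all: you assert that one ``extracts a common prime factor from $d_2$ and $d_3$'' out of the twisted wreath structure, and then explicitly flag this as the principal obstacle. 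That step is where the entire content of the theorem lives --- for two full-support elements the support stabilizer gives nothing, and there is no a priori reason why $\cent K{n_2}$ and $\cent K{n_3}$ should have non-coprime indices in $K=\ker\rho$ without substantial input about factorizations of the simple group $T$.

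For comparison, the paper's proof does not stratify by support. It takes a minimal normal subgroup $K$ of the point stabilizer $H$ (a direct product of isomorphic nonabelian simple groups) and uses the primitivity criterion for twisted wreath products --- Baddeley's maximality condition forces $K\cap L$ to induce all inner automorphisms of $T_1$ --- to conclude $\cent NK=1$, so $K$ fixes a unique point of $\Omega=N$. Then each non-trivial suborbit yields a proper subgroup $\cent K{n_i}$ of $K$, and three pairwise coprime non-trivial subdegrees would yield three proper subgroups of $K$ with pairwise coprime indices. Ruling this out (Lemma~\ref{id1}) is the genuinely hard ingredient: it rests on the Liebeck--Praeger--Saxl classification of maximal and multiple factorizations, which shows that no nonabelian simple group has three maximal subgroups of pairwise coprime indices. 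Your sketch would need an ingredient of comparable strength to close the full-support case, and as written it does not contain one.
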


Using the Classification of the Finite Simple
Groups, we have proved  the following theorem in~\cite{DGPS2}.  

\begin{theorem}\label{conj1}
Let $G$ be a finite primitive permutation group. The maximal size of a set of 
pairwise coprime non-trivial subdegrees of $G$ is at most $2$.
\end{theorem}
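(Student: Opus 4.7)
The plan is to combine Theorems~\ref{thm3} and~\ref{TW} with a case-by-case analysis based on the Classification of the Finite Simple Groups. By Theorem~\ref{thm3}, if $G$ has two coprime non-trivial subdegrees then $G$ is of O'Nan-Scott type AS, PA or TW, and by Theorem~\ref{TW} the TW case already yields at most two such subdegrees. Hence only the AS and PA types remain to be ruled out as potentially hosting three pairwise coprime non-trivial subdegrees.

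For the AS case I would write $T\le G\le \Aut(T)$ with $T$ a non-abelian simple group, let $H=G_\omega$ be a point stabilizer, and assume for contradiction that $H$ has three orbits on $\Omega\setminus\{\omega\}$ with pairwise coprime lengths $m_1,m_2,m_3>1$; each has the shape $m_i=|H:H\cap H^{g_i}|$ for some $g_i\in G\setminus H$. The plan is to walk through the simple groups $T$ family by family. For sporadic groups one can use explicit character tables and maximal subgroup lists, together with Table~\ref{table}, to eliminate such triples directly. For alternating and symmetric groups the argument would use the combinatorial description of primitive actions and count orbit lengths on $k$-subsets, partitions, and product structures. For classical and exceptional groups of Lie type one relies on the Liebeck--Praeger--Saxl classification of maximal factorizations of simple groups, combined with more delicate counting of orbit lengths of $H$ on coset spaces $G/H^{g}$; roughly, three coprime non-trivial subdegrees would force three near-factorizations of $H$ whose index arithmetic is mutually incompatible.

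For the PA case the socle $T^k$ acts on $\Omega=\Delta^k$ in product action, with $G\le H_0\wr S_k$ where $H_0\le\Sym(\Delta)$ is primitive of AS type. The socle point stabilizers have shape $K^k$ with $K=(H_0)_\delta$, and each $G$-subdegree at $(\delta,\dots,\delta)$ can be expressed, via the $S_k$-twisting, as a sum of products of $H_0$-subdegrees. The key observation I would exploit is that every prime $p$ dividing a non-trivial $G$-subdegree must already divide a non-trivial $H_0$-subdegree. Consequently, three pairwise coprime non-trivial $G$-subdegrees would produce three pairwise coprime non-trivial $H_0$-subdegrees, reducing the claim back to the AS case.

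The main obstacle will be the AS case for groups of Lie type: there the interplay between coprimality of indices, the $p$-local structure (for $p$ the defining characteristic as well as for other primes), and the combinatorics of maximal subgroups requires a careful case division. The triples listed in Table~\ref{table} should be central inputs, but supplementary arithmetical information about orbit lengths in each remaining maximal-subgroup configuration, together with bounds on subdegrees that cannot arise, will be needed. This is the step where Classification-dependent technicalities are unavoidable, and it is the reason the full argument is deferred to the companion paper \cite{DGPS2}.
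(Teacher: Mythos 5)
Your overall architecture (reduce to AS, PA and TW via Theorems~\ref{thm3} and~\ref{TW}, then defer a CFSG case analysis to \cite{DGPS2}) matches the paper's, but the step you supply for the PA case contains a genuine error. The ``key observation'' that every prime dividing a non-trivial $G$-subdegree must already divide a non-trivial $H_0$-subdegree is false: in the product action of $H_0\wr\Sym(k)$ on $\Delta^k$ with base point $\alpha=(\delta,\dots,\delta)$, the point $\beta'=(\delta_1,\delta,\dots,\delta)$ lies in a stabilizer orbit of length $k\cdot|\delta_1^{(H_0)_\delta}|$ (this is exactly the situation of Example~\ref{exa3}), so any prime divisor of $k$ appears in a non-trivial subdegree of the product-action group without any obligation to divide a subdegree of $H_0$ on $\Delta$. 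Moreover, even if the observation were true it would not yield your conclusion: knowing that each prime of $m_i$ divides \emph{some} non-trivial $H_0$-subdegree does not manufacture three \emph{pairwise coprime} non-trivial $H_0$-subdegrees, since the primes of a single $m_i$ may be scattered over several $H_0$-subdegrees and one $H_0$-subdegree may absorb primes coming from two different $m_i$. The correct reduction, carried out in Proposition~\ref{propo}, passes instead to the socle stabilizer $N_\alpha=(T_\delta)^k$: the $N_\alpha$-orbit length $n_i'$ divides $n_i$ (so pairwise coprimality is inherited), and $n_i'$ factors as a product $\prod_j|\delta_{i,j}^{T_\delta}|$ of $T_\delta$-orbit lengths on $\Delta$, at least one of which exceeds $1$ because $T_\delta$ fixes only $\delta$; selecting one such non-trivial factor for each $i$ produces the required coprime subdegrees.

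A second gap is where this reduction lands. It does not return you to the AS case for an almost simple group $G$ with maximal point stabilizer; it lands in the case of the \emph{simple} socle $T$ acting on $\Delta$ with point stabilizer $T_\delta=T\cap H_\delta$, which is only \emph{pseudo-maximal} in $T$ in the sense of Definition~\ref{def} and need not be maximal. This is precisely why the paper formulates Theorem~\ref{conj3} for transitive simple groups with pseudo-maximal stabilizers and proves Theorem~\ref{thm4} here, deferring only Theorem~\ref{conj3} to \cite{DGPS2}. Your plan of running through maximal subgroups of the almost simple group $G$ and the maximal coprime factorizations of Table~\ref{table} would therefore have to be supplemented to handle non-maximal stabilizer configurations in $T$; the socle reduction of Proposition~\ref{propo} handles the AS and PA cases uniformly and is the form of the problem actually resolved in the companion paper.
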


Theorem~\ref{conj1} is related to a result on primitive
groups first observed by Peter Neumann to be a consequence of a 1935 theorem of Marie Weiss. Its statement~\cite[Corollary~2, p.~93]{Piter} is: if a
primitive group has $k$ 
pairwise coprime non-trivial subdegrees, then its  rank is at least $2^k$. Theorem~\ref{conj1} shows that this result  can
only be applied with $k=1$ or $k=2$. In light of Theorems~\ref{thm3} and~\ref{TW}, proving Theorem~\ref{conj1} reduces to consideration of 
primitive permutation groups of AS and PA type.
In this paper we show that a proof of Theorem~\ref{conj1} reduces to  a similar problem for transitive nonabelian simple permutation groups (it is this  reduction that is used in~\cite{DGPS2} to prove Theorem~\ref{conj1}). 
   
\begin{definition}\label{def}{\rm Let $T$ be a nonabelian simple group
    and $L$ a subgroup of $T$. We say that $L$ is {\em pseudo-maximal}
    in $T$ if there exists an almost simple group $H$ with socle $T$
    and a maximal subgroup $M$ of $H$ with  
$T\nsubseteq M$ and $L=T\cap M$.} 
\end{definition}

We announce here a proof of  the following theorem about nonabelian simple groups (which again will be proved in~\cite{DGPS2}).

\begin{theorem}\label{conj3}Let $T$ be a  transitive nonabelian simple permutation group and assume that the stabilizer of a point is pseudo-maximal in $T$. Then the maximal size of a set of pairwise coprime  non-trivial subdegrees of $T$ is at most $2$.
\end{theorem}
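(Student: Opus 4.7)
\emph{Proof proposal.} The plan is to leverage pseudo-maximality to embed the given transitive action of $T$ into a primitive action of an almost simple group $H$ with socle $T$, and then to argue case by case using the Classification of Finite Simple Groups (CFSG). By hypothesis, there exist an almost simple group $H$ with socle $T$ and a maximal subgroup $M\le H$ such that $T\not\le M$ and $L=T\cap M$. Since $T\trianglelefteq H$ and $T\not\le M$, we have $H=TM$ and $|H:M|=|T:L|$; identifying $\Omega$ with $H/M$, the group $H$ acts primitively on $\Omega$ while $T$ acts transitively with point stabilizer $L$.

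The arithmetic link between $T$-subdegrees and $H$-subdegrees comes from the fact that $L\trianglelefteq M$ (as $L=T\cap M$ and $T\trianglelefteq H$): each $M$-orbit on $\Omega$ decomposes into $L$-orbits of equal size, whose number divides $|M/L|=|H/T|$, hence divides $|\mathrm{Out}(T)|$. Thus every non-trivial $T$-subdegree $n$ divides some non-trivial $H$-subdegree $d$ with $d/n\mid|\mathrm{Out}(T)|$. Suppose for contradiction that $T$ has three pairwise coprime non-trivial subdegrees $n_1,n_2,n_3$, and let $d_i=k_i n_i$ be the associated $H$-subdegrees ($k_i\mid|\mathrm{Out}(T)|$). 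Writing $\pi$ for the set of prime divisors of $|\mathrm{Out}(T)|$, the $\pi'$-parts of $d_1,d_2,d_3$ coincide with those of $n_1,n_2,n_3$ and remain pairwise coprime; since the number of pairwise coprime non-trivial $\pi$-numbers is at most $|\pi|$, at least $3-|\pi|$ of the $n_i$ already contribute genuine pairwise coprime non-trivial $H$-subdegrees outside $\pi$.

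From here I would enumerate the primitive almost simple pairs $(H,M)$ using CFSG. When $T$ is an alternating group, $|\mathrm{Out}(T)|$ divides $4$, so $\pi\subseteq\{2,3\}$, and the maximal subgroups of $S_n$ and $A_n$ are given by the O'Nan--Scott theorem for symmetric groups; a direct orbit analysis on $H/M$ then rules out three pairwise coprime $n_i$. When $T$ is a classical group, I would invoke Aschbacher's classification of maximal subgroups and analyse the subdegree structure of $H/M$ class by class. When $T$ is an exceptional group of Lie type or a sporadic group, I would appeal to the Liebeck--Seitz classification and the ATLAS, respectively, to dispose of the finitely many remaining configurations.

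The hard part will be the classical-group case: the Aschbacher classes yield a long list of geometric types of maximal subgroup $M$, and for each type one must analyse the $M$-orbit structure on $H/M$, extract the $L$-orbit refinement (tracking the factor $k_i\mid|\mathrm{Out}(T)|$), and show that no combination produces three pairwise coprime non-trivial $T$-subdegrees. This sustained case analysis, together with the separate treatment of the small-$\pi'$ configurations for those simple groups with $|\pi|\ge 2$, is precisely the technical core of the companion paper \cite{DGPS2}.
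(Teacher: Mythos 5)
There is a genuine gap here, and it is worth noting first that the paper itself contains no proof of this statement: Theorem~\ref{conj3} is only \emph{announced}, with its proof deferred to the companion paper \cite{DGPS2}. Your proposal mirrors that situation rather than resolving it. The entire mathematical content of the theorem --- showing that no transitive nonabelian simple group with pseudo-maximal point stabilizer has three pairwise coprime non-trivial subdegrees --- is relegated to a case analysis that you describe in the conditional (``I would enumerate\dots'', ``I would invoke\dots'') but never carry out. Listing which classification theorems one would consult is a research plan, not a proof; as you yourself concede, the sustained analysis over the Aschbacher classes, the exceptional groups and the sporadic groups is precisely the technical core of \cite{DGPS2}.

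Moreover, the one step you do attempt in detail --- transferring coprimality from $T$-subdegrees to $H$-subdegrees --- is not watertight. You correctly observe that $L=T\cap M$ is normal in $M$, so each non-trivial $T$-subdegree $n_i$ divides an $H$-subdegree $d_i=k_in_i$ with $k_i$ dividing $|M:L|$ and hence $|\mathrm{Out}(T)|$. But the $d_i$ need not be pairwise coprime, since the $k_i$ may share prime factors; your patch via $\pi'$-parts only shows that at least $3-|\pi|$ of the $d_i$ have non-trivial, pairwise coprime $\pi'$-parts, which is vacuous once $|\mathrm{Out}(T)|$ has three or more prime divisors and in any case does not reduce the problem to a clean assertion about the primitive group $H$. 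A more promising arithmetic handle, closer in spirit to what this paper actually does elsewhere, is that three pairwise coprime non-trivial subdegrees $|L:L_{\omega_i}|$ force $L=L_{\omega_i}L_{\omega_j}$ for all $i\neq j$, i.e.\ a multiple factorization of the point stabilizer; compare the way Lemma~\ref{id1} exploits the classification of multiple factorizations in \cite{BadPr}. As it stands, your argument establishes nothing beyond the set-up.
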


Since a pseudo-maximal subgroup of $T$ is not necessarily a maximal subgroup, we see that Theorem~\ref{conj3} is formally stronger than Theorem~\ref{conj1} for the class of nonabelian simple permutation groups. However, we prove in this paper that these two theorems are strongly related. 

\begin{theorem}\label{thm4}Theorem~\ref{conj1} follows from Theorem~\ref{conj3}.
\end{theorem}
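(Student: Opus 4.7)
The plan is to assume Theorem~\ref{conj3} and derive Theorem~\ref{conj1} by contradiction. Let $G$ be a finite primitive permutation group admitting three pairwise coprime non-trivial subdegrees $m_1,m_2,m_3$. By Theorem~\ref{thm3}, $G$ has O'Nan-Scott type AS, PA or TW, and Theorem~\ref{TW} excludes the TW case. The strategy is to treat the AS case directly using Theorem~\ref{conj3} and to reduce the PA case to the AS case.

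\smallskip

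\emph{AS case.} Let $T$ be the socle of $G$, let $M$ be the point stabilizer (a maximal subgroup of $G$ with $T\not\subseteq M$), and put $L=M\cap T$. By Definition~\ref{def}, $L$ is pseudo-maximal in $T$; moreover $T$ acts transitively on $\Omega=G/M$ with stabilizer $L$, so Theorem~\ref{conj3} applies to this action. Since $L\trianglelefteq M$, every $M$-orbit on $\Omega$ is a union of $L$-orbits of a common length: if an $M$-orbit has size $m$ and contains $r$ such $L$-orbits each of length $d$, then $m=rd$ and $r\mid[M:L]=[G:T]$. Writing $m_i=r_id_i$, the $d_i$ are pairwise coprime divisors of the $m_i$. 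If all three $d_i$ exceed $1$, we obtain three pairwise coprime non-trivial $T$-subdegrees, contradicting Theorem~\ref{conj3}. Otherwise some $m_i$ equals $r_i$ and divides $[G:T]\leq|\mathrm{Out}(T)|$, and the corresponding $M$-orbit lies inside $\mathrm{Fix}_\Omega(L)\cong N_T(L)/L$; this residual configuration will be excluded by analysing the induced $M/L$-action on $N_T(L)/L$ and exploiting pseudo-maximality of $L$.

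\smallskip

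\emph{PA case.} Here $G$ has socle $T^k$ with $k\geq 2$, and $G\leq H\wr\Sym(k)$ with $H$ an almost simple primitive group on $\Delta$ having socle $T$; the group $G$ acts on $\Omega=\Delta^k$ by the product action. Fixing $\omega=(\delta,\dots,\delta)\in\Omega$, I classify the $G_\omega$-orbits on $\Omega\setminus\{\omega\}$ according to the set of coordinates at which their points differ from $\delta$, and express each orbit size as a product of $H_\delta$-orbit sizes on $\Delta$ together with a combinatorial factor coming from the top group. The key claim is that three pairwise coprime non-trivial $G$-subdegrees on $\Omega$ force three pairwise coprime non-trivial $H$-subdegrees on $\Delta$; the AS case applied to $H$ then yields the contradiction.

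\smallskip

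The main obstacle is the AS sub-case where some $d_i=1$: the remaining two $d_j$ then only produce the two pairwise coprime non-trivial $T$-subdegrees that Theorem~\ref{conj3} already permits, so no direct contradiction arises, and one must separately exclude an extra coprime $M$-orbit lying entirely in $\mathrm{Fix}_\Omega(L)$. This is where pseudo-maximality of $L$ and detailed information on the action of $M/L\hookrightarrow\mathrm{Out}(T)$ on $N_T(L)/L$ become indispensable; a secondary technical difficulty is the bookkeeping in the PA case needed to pass from $G$-subdegrees on $\Delta^k$ to $H$-subdegrees on $\Delta$ while preserving pairwise coprimality.
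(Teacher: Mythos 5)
Your overall strategy---use Theorems~\ref{thm3} and~\ref{TW} to reduce to the AS and PA types, then pass from the point stabilizer to its intersection with the socle so that Theorem~\ref{conj3} applies---is the same as the paper's, which packages the reduction as Proposition~\ref{propo}. However, the step you yourself flag as ``the main obstacle'' is a genuine gap, and it is precisely the point the paper's argument settles. In the AS case the degenerate possibility $d_i=1$, i.e.\ a non-trivial $M$-orbit contained in $\mathrm{Fix}_\Omega(L)$, cannot occur at all: since $L=M\cap T$ is normal in $M$ and $M$ is maximal and core-free in $G$, we have $N_G(L)=M$ (if $N_G(L)=G$ then $L\trianglelefteq G$, which forces $L=1$ and $T$ regular, impossible for a primitive group of AS type), hence $N_T(L)=T\cap M=L$ and $\mathrm{Fix}_\Omega(L)\cong N_T(L)/L$ consists of the single point $\omega$. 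Thus every non-trivial $M$-orbit splits into non-trivial $L$-orbits, all three $d_i$ exceed $1$, and Theorem~\ref{conj3} gives the contradiction directly; the ``analysis of the $M/L$-action on $N_T(L)/L$'' that you defer is vacuous because that set is trivial. As written, your proof leaves this case open and therefore does not establish the theorem.

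The PA reduction as you sketch it is also shakier than it needs to be: $G_\omega$ is contained in $H_\delta\wr\Sym(k)$ but is in general a proper, non-normal subgroup of it, so a $G_\omega$-orbit size need not factor as ``a product of $H_\delta$-orbit sizes times a combinatorial factor from the top group'', and $H_\delta$-orbit sizes need not divide the subdegrees. The device used in Proposition~\ref{propo} is to descend to the socle stabilizer $N_\omega=(T_1)_\delta\times\cdots\times(T_k)_\delta$, which \emph{is} normal in $G_\omega$: then $|\beta^{N_\omega}|=\prod_j|\delta_j^{T_\delta}|$ divides the corresponding subdegree, pairwise coprimality of these products forces pairwise coprimality of the individual factors, and the same fixed-point argument as above ($T_\delta$ fixes only $\delta$, because $N_H(T_\delta)=H_\delta$) guarantees that each of the three points contributes a factor $|\delta_{i,j}^{T_\delta}|>1$. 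This lands directly on three pairwise coprime non-trivial subdegrees of $T$ on $\Delta$ with pseudo-maximal point stabilizer $T_\delta$, contradicting Theorem~\ref{conj3}, and it treats AS and PA uniformly; reducing instead to $H$-subdegrees would still leave you the $H$-to-$T$ step, i.e.\ the very AS obstacle you have not resolved.
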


We conclude with a problem on relatively prime subdegrees in primitive groups.
\begin{conjB}\label{conj2}
Determine the finite primitive permutation groups $G$ having two non-trivial coprime subdegrees $m$
and $n$ for which $mn$ is not a subdegree of $G$.
\end{conjB}
This problem is related to another classical result due to
Marie Weiss~\cite[Theorem~3, p.~92]{Piter}: if $m$ and $n$ are non-trivial
coprime subdegrees of a primitive group $G$ and $m<n$, then $G$
has a subdegree $d$ such that $d$ divides $mn$ and
$d>n$. In Problem~\ref{conj2} we suggest that (apart a small list of exceptions) $d$ can be
chosen to be $mn$. Actually, we know only one almost simple group $G$ where $d$ cannot be taken to be $nm$. 

\begin{example}\label{HS}{\rm
Let $G$ be the sporadic simple group $HS$ in its primitive permutation representation of 
degree $3850$ on the cosets of $2^4.\Sym(6)$. Using the computational algebra system~\texttt{magma}~\cite{magma}, it is easy to check that 
the subdegrees of $G$ are 
$1, 15, 32, 90, 120, 160, 192, 240,240, 360, 960, 1440$. 
In particular, we see that $15$ and $32$ are coprime but there is no subdegree of size $15\times 32$=$480$.}
\end{example}

We are grateful to Michael Giudici for providing this beutiful example.

We now give some examples which demonstrate 
that Theorem~\ref{conj1} is false for 
transitive groups that are not 
primitive.   These examples show that there is no upper bound
on the number of pairwise coprime non-trivial subdegrees for general transitive groups.  

\begin{example}\label{Frob}{\rm 
Let $G$ be
    the direct product $F_1\times \cdots \times F_\ell$ of $\ell$
    Frobenius groups. For each $i\in \{1,\ldots,\ell\}$, let
    $N_i$ be the Frobenius kernel of $F_i$ and let $K_i$ be a Frobenius
    complement for $N_i$ in $F_i$.  Assume that $|K_i|$ is coprime to 
    $|K_j|$, for every two distinct elements $i$ and $j$ in
    $\{1,\ldots,\ell\}$. Write $N=N_1\times 
    \cdots \times N_\ell$ and $K=K_1\times \cdots \times
    K_\ell$. Clearly, the group $G$ acts on $N$ as a holomorphic
  permutation  group, that is, $N$ acts on $N$ by right multiplication and $K$
    acts on $N$ by group conjugation. The stabilizer in $G$ of the
    element $1$ of $N$ is $K$. Now, for each $i\in
    \{1,\ldots,\ell\}$, let $n_i\in N_i\setminus\{1\}$ and let
    $\omega_i=(1,\ldots,1,n_i,1,\ldots,1)$ be the element of $N$ with
    $n_i$ in the $i^{\mathrm{th}}$ coordinate and $1$ everywhere
    else. Clearly, $|\omega_i^{K}|=|K:\cent
    K{\omega_i}|=|K_i|$. Therefore $G$ has a set of at least $\ell$ pairwise coprime non-trivial subdegrees.}
\end{example}

However, if we restrict to \emph{faithful subdegrees} of a transitive group $G$, that is, subdegrees $d$ such that there exists an orbit  of length $d$ of a stabilizer $G_\alpha$ on which $G_\alpha$ acts faithfully, then in fact we can show that a conclusion analogous to the statement of Theorem~\ref{conj1} does hold.  We note that, in particular, every primitive permutation group has a faithful subdegree~\cite[Theorem~3]{Bob}.

\begin{theorem}\label{cor:3}
Let $G$ be a finite transitive permutation group of degree $n > 1$.
Assume that $G$ is not regular and let $H$ be the stabilizer of a point. Then a set of faithful subdegrees that are pairwise coprime has size at most $2$. 
Moreover, if the Fitting subgroup of $H$ is non-trivial, then any two faithful subdegrees of $G$ have a non-trivial common factor.
\end{theorem}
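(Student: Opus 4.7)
My plan is to dispatch the ``moreover'' clause first by a self-contained argument on the Fitting subgroup, which simplifies the main statement, and then to reduce the first assertion to the primitive case and invoke Theorem~\ref{conj1}.

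For the moreover, assume $F(H)\neq 1$ and choose a minimal normal subgroup $N$ of $H$ with $N\le F(H)$; then $N$ is elementary abelian of some prime exponent $p$. Let $\Delta=\beta^H$ be any faithful orbit, necessarily of size $d>1$. Since $N\trianglelefteq H$ and $H$ is faithful on $\Delta$, so is $N$. As $N$ is abelian, $N_\beta$ is $N$-invariant with $N_{\beta^n}=N_\beta$ for every $n\in N$, so it coincides with the kernel of $N$ on its orbit through $\beta$. Consequently the kernel of $N$ on all of $\Delta$ equals $\bigcap_{h\in H}N_\beta^h$, an $H$-invariant subgroup of $N$, hence $1$ or $N$ by minimality. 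The second alternative would make $N$ act trivially on $\Delta$, contradicting faithfulness of $N$; so $N_\beta<N$, and then $p$ divides $|N:N_\beta|$, the common size of the $N$-orbits on $\Delta$, which divides $d$. Every faithful subdegree is therefore divisible by $p$, proving the moreover; in particular, three pairwise coprime faithful subdegrees $>1$ force $F(H)=1$.

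For the first assertion I argue by contradiction and suppose $G$ has three pairwise coprime faithful subdegrees $d_1,d_2,d_3>1$, so $F(H)=1$ by the above. Then the socle $E(H)$ of $H$ is a non-trivial direct product of nonabelian simple groups, acting faithfully on each orbit $\Delta_i$ by the same reasoning as in the moreover. Since $H<G$, choose a maximal subgroup $M$ of $G$ with $H\le M$; then the coset action of $G$ on $\Omega':=G/M$ is primitive, and the canonical $G$-equivariant map $\pi\colon\Omega\to\Omega'$, $gH\mapsto gM$, sends each $\Delta_i$ into a subset of an $M$-orbit on $\Omega'$. The plan is to show that these three containing $M$-orbits are distinct and of pairwise coprime lengths $>1$, which would contradict Theorem~\ref{conj1}.

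The main obstacle is this transfer: a priori the $M$-orbit containing $\pi(\beta_i)$, of size $|M:M\cap M^{g_i}|$, is unrelated to $d_i=|H:H\cap H^{g_i}|$, and the three target orbits might merge or be swallowed by much larger ones. The decisive leverage comes jointly from $F(H)=1$ and the faithfulness of each $\Delta_i$: they guarantee that $E(H)\le H\le M$ acts faithfully on every $\Delta_i$, imposing nontrivial divisibility constraints on the corresponding $M$-orbit lengths through the orbit structure of a nonabelian socle. Combining these constraints with the pairwise coprimality of the $d_i$ should produce three pairwise coprime non-trivial subdegrees of $G$ on $\Omega'$, which together with Theorem~\ref{conj1} yields the required contradiction.
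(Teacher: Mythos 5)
Your argument for the ``moreover'' clause is correct and is essentially the paper's: you take a minimal normal subgroup $N$ of $H$ inside $\F H$, use core-freeness of the point stabilizers on a faithful orbit to see that $N_\beta$ is proper in $N$, and conclude that $p$ divides every faithful subdegree. (The paper phrases this once and for all in Lemma~\ref{apeman}: $|N:N\cap H_i|$ divides $|H:H_i|$ and $N\cap H_i\subsetneq N$ because $H_i$ is core-free, so $t\le\mu(N)$, and $\mu(N)=1$ for $N$ a $p$-group.)

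The first assertion, however, has a genuine gap, and it is exactly the one you flag yourself. The passage from $\Omega=G/H$ to the primitive quotient $\Omega'=G/M$ does not transfer the data you need: the $M$-orbit containing $\pi(\beta_i)=g_iM$ has length $|M:M\cap M^{g_i}|$, which bears no divisibility relation in either direction to $d_i=|H:H\cap H^{g_i}|$; the three image points can land in a single $M$-orbit; and if $g_i\in M$ the image is the fixed point $M$ itself, so the corresponding ``subdegree'' of the primitive action is trivial. Saying that faithfulness of $E(H)$ on each $\Delta_i$ ``should produce'' three coprime non-trivial subdegrees of $G$ on $\Omega'$ is not an argument, and I do not see how to make it one: the relevant group-theoretic information lives in $H$ and its point stabilizers on the $\Delta_i$, not in $M$ and its orbits on $G/M$. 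Moreover, even if the transfer worked, you would be invoking Theorem~\ref{conj1}, whose proof is deferred to~\cite{DGPS2} and is far heavier than what is needed. The paper's route stays entirely inside $H$: take a minimal normal subgroup $N$ of $H$ (with $\F H=1$ this is a direct product of isomorphic nonabelian simple groups), intersect it with the point stabilizers $H_i=H_{\omega_i}$ to obtain proper subgroups $N_i=N\cap H_i$ with $|N:N_i|$ dividing $|H:H_i|$, hence pairwise coprime indices; then Lemma~\ref{id1} bounds by $2$ the number of proper subgroups of such an $N$ with pairwise coprime indices, by reducing to a single simple factor and consulting the classification of coprime (multiple) factorizations of nonabelian simple groups in Table~\ref{table} (or~\cite{BadPr}). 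That classification input is the missing ingredient in your sketch; some use of the classification of finite simple groups appears unavoidable here.
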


The proof of Theorem~\ref{cor:3} also yields the following result about field
extensions.

\begin{theorem}\label{cor:4}
Let $k$ be a field and let $k_1,\ldots,k_t$ be finite extensions of $k$ all with
the same normal closure $K$. Assume that the indices $[k_i : k]$ are pairwise coprime. Then $t \leq 2$.
\end{theorem}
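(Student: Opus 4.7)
The plan is to recast Theorem~\ref{cor:4} as a group-theoretic statement via Galois theory and then argue along the lines of the proof of Theorem~\ref{cor:3}. We may assume each $[k_i:k]>1$ (otherwise such an extension can be removed from the list). One then reduces to the separable case: purely inseparable contributions have degrees that are powers of the characteristic of $k$, and pairwise coprimality forces all but at most one such contribution to be trivial. In the separable case $K/k$ is Galois, so set $G=\Gal(K/k)$ and $H_i=\Gal(K/k_i)$; then $[G:H_i]=[k_i:k]=n_i$ are pairwise coprime, and the hypothesis that $K$ is the normal closure of each $k_i$ translates exactly to each $H_i$ being core-free in $G$, i.e.\ $\bigcap_{g\in G} gH_ig^{-1}=1$. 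Hence Theorem~\ref{cor:4} reduces to the purely group-theoretic claim that a finite group admits at most two core-free subgroups of pairwise coprime indices, all greater than $1$.

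Suppose for contradiction that $H_1,H_2,H_3$ are three such subgroups. First one shows $\F{G}=1$: for any prime $p$, pairwise coprimality of the $n_i$ gives some $i$ with $p\nmid n_i$, so $[\O_p(G):\O_p(G)\cap H_i]$ is a $p$-power dividing $n_i$, hence equals $1$, whence $\O_p(G)\subseteq H_i$. As $\O_p(G)$ is normal in $G$ and $H_i$ is core-free, $\O_p(G)$ lies in the trivial core of $H_i$, forcing $\O_p(G)=1$. Running over all $p$ yields $\F{G}=1$, so $N:=\mathrm{Soc}(G)$ is a direct product of nonabelian simple groups and $G$ embeds into $\Aut(N)$. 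Setting $L_i:=N\cap H_i$, we have $[N:L_i]=[NH_i:H_i]\mid n_i$, so the $[N:L_i]$ are pairwise coprime; each $[N:L_i]>1$ since $N\subseteq H_i$ would place $N$ in the trivial core of $H_i$. Thus $N$ has three proper subgroups of pairwise coprime indices.

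The principal obstacle is the final step of ruling this out, which is where the argument mirrors the proof of Theorem~\ref{cor:3}. One enlarges each $L_i$ to a maximal subgroup $M_i$ of $N$ containing it; the indices $[N:M_i]$ remain pairwise coprime and greater than $1$. Analysing the structure of maximal subgroups of a direct product of nonabelian simple groups (both the product-type and the diagonal-type maximals), together with the $G$-permutation action on the simple factors, one reduces to the case where $N=T$ is a single nonabelian simple group. Coprimality then forces $M_iM_j=T$ for each pair, so each pair $(T,M_i,M_j)$ gives a coprime maximal factorization appearing in Table~\ref{table}. A direct inspection of that table reveals that no nonabelian simple group admits three maximal subgroups of pairwise coprime indices, producing the desired contradiction and thereby establishing $t\le 2$.
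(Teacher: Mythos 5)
Your reduction to group theory is the same as the paper's: dispose of inseparability by noting that if every $k_i/k$ were inseparable then $p=\mathrm{char}(k)$ would divide every $[k_i:k]$, pass to $G=\Gal(K/k)$ and the core-free subgroups $H_i=\Gal(K/k_i)$ of pairwise coprime index, and push the problem into a normal subgroup whose structure you control. The paper does this via Lemma~\ref{apeman}: it intersects the $H_i$ with a \emph{minimal} normal subgroup of $G$, which is either a $p$-group (impossible already for two coprime non-trivial indices) or a direct product of \emph{isomorphic} nonabelian simple groups, where Lemma~\ref{id1} applies.

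The gap is in your final step. You intersect with the full socle $N=\mathrm{Soc}(G)$ and then assert that one ``reduces to the case where $N=T$ is a single nonabelian simple group'', so that each pair yields a coprime maximal factorization of $T$ listed in Table~\ref{table}. That reduction fails: the simple factors of the socle need not be pairwise isomorphic, and a direct product of non-isomorphic nonabelian simple groups \emph{can} have three proper (even maximal) subgroups of pairwise coprime indices. For instance, in $N=M_{11}\times\Alt(5)$ the subgroups $L_2(11)\times\Alt(5)$, $M_{10}\times\Alt(5)$ and $M_{11}\times\Alt(4)$ have indices $12$, $11$ and $5$. So from ``three proper subgroups of $\mathrm{Soc}(G)$ with pairwise coprime indices'' no contradiction with Table~\ref{table} follows, because no single simple factor need carry all three subgroups. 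The repair is exactly what Lemma~\ref{apeman} does: fix one minimal normal subgroup $N_0$ of $G$; core-freeness of each $H_i$ already gives $N_0\cap H_i\subsetneq N_0$ with index dividing $n_i$, the factors of $N_0$ are all isomorphic to a single $T$, diagonal-type maximal subgroups have index $|T|$ and hence cannot occur in a coprime triple, and product-type maximal subgroups then produce three maximal subgroups of the one abstract group $T$ with pairwise coprime indices, which Table~\ref{table} (or \cite{BadPr}) excludes. Your preliminary step $\F G=1$ is correct but is not needed for the bound $t\le 2$; it corresponds to the ``moreover'' clause of Theorem~\ref{cor:3}.
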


\subsection{Structure of the paper}\label{sub:4}The structure of this paper is straightforward: we prove Theorem~\ref{thm2}, Corollary~\ref{cornew} and Theorem~\ref{thm:101}
in Section~\ref{secthm2}; we prove Theorem~\ref{thm1} in
Section~\ref{secthm1}; we prove Theorem~\ref{thm3} in
Section~\ref{secthm3}; we prove Theorems~\ref{TW} and~\ref{thm4} in
Section~\ref{sec4}; we prove Theorems~\ref{cor:3} and~\ref{cor:4} in Section~\ref{CorProof}; and we give Table~\ref{table} in Section~\ref{sectable}.

\section{Proofs of Theorems~\ref{thm2} and~\ref{thm:101} and Corollary~\ref{cornew}}\label{secthm2}

We say that a factorization $H=AB$ is
\emph{coprime} if $|H:A|$ is relatively prime to $|H:B|$ and both
$A,B$ are proper subgroups of $H$. Also $H=AB$ is
\emph{maximal} if $A$ and $B$ are maximal subgroups of $H$. We start
by proving a preliminary theorem on finite classical groups.  We let
$\tau$ denote the transpose inverse map of $\GL_n(q)$, that is, 
$x^\tau=(x^{tr})^{-1}$ where $x^{tr}$ is the transpose matrix of
$x$. We denote by $\CSp(2n,q)$ the conformal
        symplectic group, that is, the elements of $\GL_{2n}(q)$
        preserving a given symplectic form up to a scalar multiple.

\begin{theorem}\label{thm8}Let $n\geq 2$.
\begin{description}
\item[(a)]Every element of $\GL_n(q)$ is conjugate to its inverse in
  $\GL_n(q)\langle\tau\rangle$. 
\item[(b)]Every element of $\GU_n(q)$ is conjugate to its inverse in $\GU_n(q)\langle \tau\rangle$.
\item[(c)]Every element of  $\Sp(2n,q)$ is conjugate to its inverse in
  $\CSp(2n,q)$.
\item[(d)]Every element of  $\O^{\epsilon}(n,q)$ is conjugate to its
  inverse in $\O^{\epsilon}(n,q)$, for $\epsilon\in \{\pm,\circ\}$.
\end{description}
\end{theorem}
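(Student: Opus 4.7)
The plan is to reduce every part to the classical linear-algebra fact that each matrix over any field is similar to its transpose (seen, for example, by bringing $x$ into rational canonical form, together with the observation that each companion block is similar to its transpose via the reversal permutation). Once such a transpose-conjugator is produced inside $\GL_n$, the defining relation of the ambient form-preserving group converts it into an inversion-conjugator inside the stated overgroup.

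For~(a), choose $g\in\GL_n(q)$ with $gxg^{-1}=x^{tr}$. Since $\tau$ acts on $\GL_n(q)$ by $y\mapsto(y^{tr})^{-1}$ and has order~$2$,
\[
(\tau g)\, x\, (\tau g)^{-1}=\tau\,(gxg^{-1})\,\tau^{-1}=\tau\, x^{tr}\,\tau^{-1}= \bigl((x^{tr})^{tr}\bigr)^{-1}=x^{-1},
\]
so $\tau g\in\GL_n(q)\langle\tau\rangle$ conjugates $x$ to $x^{-1}$. For~(b), the same calculation inside $\GL_n(q^2)\langle\tau\rangle$ supplies a conjugator of $x$ to $x^{-1}$; the extra task is to arrange this conjugator to lie in $\GU_n(q)\langle\tau\rangle$, which we do by invoking the ambivalence of $\GU_n(q)$ (equivalently, Wall's parametrisation of unitary conjugacy classes shows that $x$ and $x^{-1}$ already lie in the same $\GU_n(q)$-class). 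In fact this makes $\langle\tau\rangle$ redundant in~(b), but we retain the formulation for uniformity with~(a).

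For~(c) and~(d), the defining relation $x^{tr}\Phi x=\Phi$, where $\Phi$ is the form matrix, rewrites as $x^{-1}=\Phi^{-1}x^{tr}\Phi$; combined with any $g$ conjugating $x$ to $x^{tr}$ this yields a candidate conjugator of $x$ to $x^{-1}$. The key task is to choose $g$ so that the resulting element lies in $\CSp(2n,q)$ in~(c), respectively in $\O^{\epsilon}(n,q)$ in~(d). The standard approach is a Witt-type decomposition of $V$ into indecomposable $\langle x\rangle$-invariant non-degenerate summands and, on each summand, an explicit construction of the conjugator -- essentially Wall's analysis of conjugacy classes in classical groups. The symplectic case strictly requires the conformal overgroup: already in $\Sp(2,q)$ with $q$ odd, a unipotent element need not be conjugate to its inverse inside $\Sp(2,q)$, which forces the passage to $\CSp(2,q)=\GL_2(q)$. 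In the orthogonal case the group itself contains enough reflections (or, in characteristic~$2$, analogous transvections) to perform the construction inside $\O^{\epsilon}(n,q)$.

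The principal obstacle will be the block-wise verification in~(c) and~(d), and in particular the characteristic~$2$ orthogonal case, where quadratic and bilinear forms decouple and the conjugacy-class parametrisation is more delicate. Several block types must be handled -- semisimple, unipotent, and mixed -- but each is a short explicit computation once the correct indecomposable normal forms are in hand. All remaining steps rely only on the defining relations of the classical groups and standard linear algebra.
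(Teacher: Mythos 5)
The serious problem is in part~(b). You assert that $\GU_n(q)$ is ambivalent, i.e.\ that $x$ and $x^{-1}$ already lie in the same $\GU_n(q)$-conjugacy class, and you conclude that $\langle\tau\rangle$ is redundant there. This is false: the centre of $\GU_n(q)$ consists of the scalars $\lambda I$ with $\lambda^{q+1}=1$, so for every $q$ there is a central element of order greater than $2$, and a central element is conjugate only to itself. (The same obstruction is exactly why $\langle\tau\rangle$ must appear in (a): a scalar $\lambda I\in\GL_n(q)$ with $\lambda^2\neq 1$ is not real in $\GL_n(q)$, but $\tau$ sends it to $\lambda^{-1}I$.) What your method actually requires for (b), mimicking your computation in (a), is a conjugator $g$ with $gxg^{-1}=x^{tr}$ lying in $\GU_n(q)$ itself, not merely in $\GL_n(q^2)$; equivalently, one must realize the $\GL_n(q^2)$-conjugacy of $x^\tau$ with $x^{-1}$ \emph{inside} the unitary group. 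That is precisely the point where the paper's proof invokes the Lang--Steinberg theorem together with the connectedness of centralizers in the algebraic group $\GL_n(k)$, $k$ the algebraic closure of $\mathbb{F}_q$; it does not follow from the existence of a transpose-conjugator over $\mathbb{F}_{q^2}$, nor from Wall's parametrisation in the form you state it.

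Your part~(a), by contrast, is correct and is genuinely more elementary than the paper's treatment: the authors prove (a) and (b) uniformly via the algebraic group and Lang--Steinberg, whereas your observation that $x$ is similar to $x^{tr}$ over the ground field (rational canonical form) and that $\tau g$ then inverts $x$ disposes of (a) with no algebraic-group machinery. For (c) and (d) the paper offers no argument at all: it cites Gow for $q$ even and Wonenburger for $q$ odd. Your plan to redo the Wall-style block-by-block analysis of indecomposable $\langle x\rangle$-invariant summands is therefore far more than the paper attempts, and you have not carried it out -- you yourself flag the characteristic-$2$ orthogonal case as the principal obstacle. Either cite Gow and Wonenburger as the authors do, or supply the block computations; as written, (c) and (d) remain unproven sketches.
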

\begin{proof}
We prove \textbf{(a)} and \textbf{(b)} first.  Let $X=\GL_n(k)$ be
the algebraic group obtained by taking the algebraic closure $k$ of
the finite field $\mathbb{F}_q$.  
Let $F:X\to X$ be the Lang-Steinberg map obtained by raising each
entry of a matrix $x$ of $X$ to the $q$th power, and  $G:X\to X$ the
Lang-Steinberg map $F\circ \tau$. As usual, we denote by $X^F$ and by
$X^G$ the fixed points of $F$ and of $G$. In our 
case, we have $X^F=\GL_n(q)$ and $X^G=\GU_n(q)$. 
  Let $x$ be in $X^F$.  Then
$x^{tr}$ and $x$ are clearly conjugate in the algebraic group
  $X$ and hence also $x^{\tau}=(x^{tr})^{-1}$ and $x^{-1}$ are conjugate in $X$.
  Since the centralizer of any 
element of $X$ is connected, 
it follows by the Lang-Steinberg theorem that $x^\tau$ and $x^{-1}$
are  conjugate in $X^F$. Therefore $x$ and $x^{-1}$ are   
conjugate in $\GL_n(q)\langle\tau\rangle$ and~\textbf{(a)} is
proved. Now, let  $x$ be in $X^G$. As we have noted in the proof
of~\textbf{(a)}, the elements $x^\tau$ and $x^{-1}$ are conjugate in
the algebraic group $X$. It follows by the
Lang-Steinberg theorem that $x^\tau$ and $x^{-1}$ 
are  conjugate in $X^G$. Therefore $x$ and $x^{-1}$ are   
conjugate in $\GU_n(q)\langle\tau\rangle$ and~\textbf{(b)} is
proved.

\textbf{(c)}   and~\textbf{(d)},  when $q$ is even, are the main
theorem of~\cite{Gow}. Finally, \textbf{(c)} and~\textbf{(d)}, when
$q$ is odd, are proved in~\cite{Won}. 
\end{proof}
Given a field $k$ and a $kH$-module $V$, we let $V^*=\Hom_k(V,k)$ denote the {\em
  dual} $kH$-module of $V$. Furthermore, we denote by $V_A$ the {\em
  restriction} of $V$ to the subgroup $A$ of $H$. Finally, if $A$ is a
subgroup of $H$ and if $V$ is a $kA$-module, then we denote by
$V_A^H=V\otimes_{kA} kH$ the module {\em induced} by $V$ from $A$ to
$H$.  

\begin{lemma}\label{lemma1}   Suppose that $H = AB$ is a factorization.  If $V$ is a non-trivial
irreducible $kH$-module, then either $A$ fixes no element of $V\setminus\{0\}$ or $B$ fixes no element of  $V^*\setminus\{0\}$. 
\end{lemma}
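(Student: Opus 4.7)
The plan is to argue by contradiction via a Frobenius reciprocity / permutation module calculation, exploiting the hypothesis $H=AB$ through double cosets. Let $W_A=k[H/A]$ and $W_B=k[H/B]$ be the permutation $kH$-modules. Suppose toward a contradiction that there exist $v \in V\setminus\{0\}$ with $Av=v$ and $\phi \in V^*\setminus\{0\}$ with $B\phi=\phi$.

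First I would translate both fixed-point hypotheses into statements about $V$ and $V^*$ as sub/quotient modules of these permutation modules. The $A$-fixed vector $v$ corresponds, via Frobenius reciprocity, to a non-zero map $W_A \to V$, which must be surjective since $V$ is irreducible. Likewise the $B$-fixed functional $\phi$ gives a non-zero map $W_B \to V^*$, and dualizing together with the standard self-duality $W_B^* \cong W_B$ (under the natural basis of cosets) produces an injection $V \hookrightarrow W_B$. Composing yields a non-zero $H$-homomorphism
\[
\psi\colon W_A \longrightarrow V \hookrightarrow W_B
\]
whose image is isomorphic to $V$.

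Next I would compute $\Hom_{kH}(W_A,W_B)$ and show it is one-dimensional by using $H=AB$. Frobenius reciprocity gives
\[
\Hom_{kH}(W_A,W_B) \;\cong\; (W_B)^A,
\]
and $(W_B)^A$ is spanned by indicator functions of $A$-orbits on $H/B$. The assumption $H=AB$ means exactly that $A$ acts transitively on $H/B$, so this space is one-dimensional, spanned by the all-ones vector $\mathbf{1}=\sum_{gB\in H/B}[gB]$. But $\mathbf{1}$ is $H$-invariant, so the unique (up to scalar) non-zero map $W_A\to W_B$ factors as $W_A \twoheadrightarrow k \to W_B$, with image the trivial submodule $k\mathbf{1}\subseteq W_B$.

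Combining the two computations, the image of $\psi$ must lie in $k\mathbf{1}$, yet this image is $V$; hence $V$ would be isomorphic to the trivial $kH$-module, contradicting the hypothesis that $V$ is non-trivial. The only real step to be careful about is the self-duality of the coset permutation module and the Mackey/Frobenius identification above, which is routine; the conceptual heart of the argument is the observation that $H=AB$ forces $A\backslash H/B$ to be a single double coset, squeezing $\Hom_{kH}(W_A,W_B)$ down to the line through the trivial module.
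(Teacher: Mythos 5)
Your proposal is correct and is essentially the paper's own argument: both proofs use Frobenius reciprocity to turn the fixed vectors into maps $k[H/A]\to V$ and $V\hookrightarrow k[H/B]$ (via self-duality of the coset module), and both use the fact that $H=AB$ makes $A$ transitive on $H/B$ to pin $\Hom_{kH}(k[H/A],k[H/B])$ down to the one-dimensional space of maps with trivial image, contradicting the non-triviality of $V$. No gaps; the identification of the Hom space with $A$-fixed points and the factorization through the trivial module are handled exactly as in the paper.
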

\begin{proof}
We argue by contradiction and we assume that $A$ fixes $a\in V\setminus\{0\}$ and that $B$ fixes $b\in V^*\setminus\{0\}$. 

Let $\Omega$ (respectively $\Delta$) be the set of right cosets of $A$
(respectively $B$) in $H$. Clearly, $H$ acts transitively on $\Omega$
and $\Delta$, and as $H=AB$, the group $B$ is transitive on $\Delta$
and $A$ is transitive on $\Omega$. Let $k_A^H$ (respectively $k_B^H$)
be the permutation module for the action of $H$ on $\Omega$
(respectively $\Delta$). Since $A$ is transitive on $\Delta$, the
multiplicity of the trivial $kA$-module $k$ in $(k_B^H)_A$ is $1$, that
is, $\dim \Hom_{kA}(k,(k_B^H)_A)=1$. From Frobenius reciprocity,
it follows that $\dim
\Hom_{kH}(k_A^H,k_B^H)=\dim\Hom_{kA}(k,(k_B^H)_A)=1$.  Therefore, the
only $H$-homomorphism of $k_A^H$ to $k_B^H$ is the homomorphism
$\varphi$ with $\Ker \varphi$ of codimension $1$ in $k_A^H$ and with
$\Im\varphi$ the trivial submodule of $k_B^H$. 

Since $A$ fixes the non-zero vector $a$ of $V$, we have $0\neq
\Hom_{kA}(k,V_A)\cong \Hom_{kH}(k_A^H,V)$ and hence $k_A^H$ has a
homomorphic image isomorphic to $V$. Similarly,  
since $B$ fixes the non-zero vector $b$ of $V^*$, we have $0\neq
\Hom_{kB}(k,V^*_B)\cong \Hom_{kH}(k_B^H,V^*)$ and hence $k_B^H$ has a
homomorphic image isomorphic to $V^*$. Using duality and the fact that
$k^*\cong k$, we obtain  that $(k_B^H)^*\cong (k^*)_B^H\cong k_B^H$
has a submodule isomorphic to $V^{**}\cong V$. This shows that there
exists an $H$-homomorphism $\psi:k_A^H\to k_B^H$ with
$k_A^H/\Ker\psi\cong V$ and with $\Im\psi\cong V$.  
Since $V$ is non-trivial, we obtain that
$\dim\Hom_{kH}(k_A^H,k_B^H)>1$, a contradiction.  
\end{proof}
Here we say that a factorization $H=AB$ is {\em exact} if $A\cap B=1$.

\begin{lemma}\label{exact}Suppose that $H = AB$ is a coprime exact
  factorization. If $V$ is a non-trivial irreducible $kH$-module, then
  either $A$ or $B$ fixes no element of $V\setminus\{0\}$. 
\end{lemma}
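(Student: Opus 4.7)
The plan is to deduce Lemma~\ref{exact} from Lemma~\ref{lemma1}, bridging the gap from invariants on the dual module $V^*$ to invariants on $V$ itself. The exactness and coprimality hypotheses together will furnish a Maschke-style averaging argument that supplies precisely this bridge.

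First I would unpack the exactness hypothesis: since $H=AB$ with $A\cap B=1$, we have $|H|=|A|\cdot|B|$, so the coprime condition on the indices $|H:A|=|B|$ and $|H:B|=|A|$ becomes the simpler statement $\gcd(|A|,|B|)=1$. Letting $p$ denote the characteristic of $k$ (with the convention $p=0$ in characteristic zero), it follows that $p$ divides at most one of $|A|$ and $|B|$. After swapping the roles of $A$ and $B$ if necessary (and observing that $V^*$ is again a non-trivial irreducible $kH$-module to which Lemma~\ref{lemma1} equally applies), I may assume $p\nmid|B|$.

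Next I would invoke Lemma~\ref{lemma1}. If $A$ fixes no element of $V\setminus\{0\}$ the lemma is proved, so I may assume $(V^*)^B=0$. Since $|B|$ is invertible in $k$, the averaging element $e_B=|B|^{-1}\sum_{b\in B}b$ is a $kB$-module idempotent projecting $V$ onto $V^B$ with kernel $[B,V]$; this gives a canonical linear isomorphism $V^B\cong V/[B,V]=V_B$. On the other hand, a $k$-linear functional on $V$ is $B$-invariant exactly when it annihilates $[B,V]$, so tautologically $(V^*)^B\cong(V_B)^*$, and in particular $\dim(V^*)^B=\dim V_B=\dim V^B$. Hence $V^B=0$, which is the required conclusion.

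The principal obstacle, and the reason exactness is essential, is exactly the passage from $V^*$-invariants to $V$-invariants under the action of $B$: in the modular setting this equality of dimensions can fail, and the coprimality $\gcd(|A|,|B|)=1$ (which is equivalent to coprimality of the indices only in the \emph{exact} case) is precisely what ensures Maschke's theorem applies to at least one of $A$ or $B$, enabling the bridge. Without this, only the weaker Lemma~\ref{lemma1} would be available.
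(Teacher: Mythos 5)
Your proof is correct and follows essentially the same route as the paper's: both use exactness plus coprimality of the indices to ensure the characteristic does not divide $|B|$, then use a Maschke-type argument for $B$ to transfer the vanishing of $B$-fixed vectors from $V^*$ to $V$, and conclude via Lemma~\ref{lemma1}. The paper phrases the transfer by decomposing the restriction of $V$ to $B$ into irreducibles and dualizing, whereas you use the averaging idempotent to equate $\dim V^B$ with $\dim (V^*)^B$; these are the same mechanism.
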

\begin{proof}
We argue by contradiction and we assume that both $A$ and $B$ fix some
non-zero vector of $V$. 
Let $r$ be the  characteristic of the field $k$. Since $|H:A|$ is
relatively prime to $|H:B|$ and $A\cap B=1$, we have that either $r$
does not divide $|A|$ or $r$ does not divide $|B|$. Replacing $A$ with
$B$ if necessary, we may assume that $r$ does not divide $|B|$.  Since
the characteristic of $k$ is coprime to the order of $B$, the module
$V_B$ is a 
completely reducible $kB$-module. Therefore,
$V_B=W_1\oplus\cdots\oplus W_s$ where $W_i$ is an  irreducible
$kB$-module, for each $i\in \{1,\ldots,s\}$. Since $B$ fixes a non-zero
vector of $V$, we have that, for some $i\in \{1,\ldots,s\}$, $W_i$ is
a trivial $kB$-module. Now, $V_B^*=W_1^*\oplus \cdots \oplus W_s^*$
and hence $W_i^*$ is a trivial submodule of $V_B^*$. This shows that
$B$ fixes a non-zero vector of $V^*$, but this contradicts
Lemma~\ref{lemma1}.  
\end{proof}

The following lemma is Lemma~$5.1$ in~\cite{GT}.
\begin{lemma}\label{GTlemma} Suppose that every element of $H$ is
  conjugate to its inverse via an element of $\Aut(H)$. If  $V$ is an
  irreducible $kH$-module, then $V^*\cong V^x$ for some $x\in
  \Aut(H)$. 
\end{lemma}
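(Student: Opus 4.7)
The plan is to deduce the lemma from an extended form of Brauer's permutation lemma applied to the joint action of $\Aut(H)$ together with duality. I first reduce to the case that $k$ is algebraically closed: if $\bar k$ is an algebraic closure and $V\otimes_k\bar k=W_1\oplus\cdots\oplus W_r$ is the decomposition into absolutely irreducible constituents (Galois conjugates of some $W$), then $W^*\cong W^x$ (to be shown below, over $\bar k$) implies $W_i^*\cong W_i^x$ for every $i$, since duality commutes with Galois twisting; hence $(V\otimes_k\bar k)^*\cong(V\otimes_k\bar k)^x$, and a Noether--Deuring descent returns $V^*\cong V^x$ over $k$.

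Over an algebraically closed field I work with irreducible (Brauer, if $\operatorname{char}k>0$) characters. Let $\sigma$ be the involution acting on $\mathrm{Irr}(H)$ by $\chi\mapsto\bar\chi$ and on the set $\mathrm{Cl}_{p'}(H)$ of $p$-regular classes by $C\mapsto C^{-1}$: duality on modules corresponds to the former, and the two actions are linked by $\bar\chi(g)=\chi(g^{-1})$. Combined with $\chi^\alpha(\alpha(g))=\chi(g)$ for $\alpha\in\Aut(H)$, this shows that $A:=\Aut(H)\times\langle\sigma\rangle$ acts on the (Brauer) character table by compatible row and column permutations. The matrix argument underlying Brauer's permutation lemma then yields that the permutation characters of $A$ on $\mathrm{Irr}(H)$ and on $\mathrm{Cl}_{p'}(H)$ coincide; in particular these two sets carry the same number of $A$-orbits.

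The hypothesis now comes in: since every element of $H$ is $\Aut(H)$-conjugate to its inverse, every $\Aut(H)$-orbit on $\mathrm{Cl}_{p'}(H)$ is already $\sigma$-closed, so the numbers of $\Aut(H)$-orbits and of $A$-orbits on $\mathrm{Cl}_{p'}(H)$ agree. Combining this with the classical Brauer permutation lemma (applied to $\Aut(H)$ alone on both $\mathrm{Irr}(H)$ and $\mathrm{Cl}_{p'}(H)$) and with the preceding paragraph, the numbers of $\Aut(H)$-orbits and of $A$-orbits on $\mathrm{Irr}(H)$ also agree. This forces every $\Aut(H)$-orbit on $\mathrm{Irr}(H)$ to be $\sigma$-closed; applied to $\chi_V$ it gives $\bar\chi_V=\chi_V^x$ for some $x\in\Aut(H)$, whence $V^*\cong V^x$.

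The main obstacle is the positive-characteristic setting: one must verify the identity $\bar\chi(g)=\chi(g^{-1})$ for Brauer characters (immediate from their description as sums of roots of unity) and check that inversion preserves $p$-regularity of elements. After these routine checks, the Brauer character table plays the role of the ordinary character table and the orbit-counting argument transfers intact.
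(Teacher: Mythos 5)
Your argument is correct, but it is genuinely different from the one in the paper. The paper's proof (following Lemma~5.1 of Guralnick--Tiep) is module-theoretic: it forms $G=H\rtimes\Aut(H)$, uses Clifford theory to see that $M=(V_H^G)_H$ is completely reducible with summands $V^x$ ($x\in G$), notes that the hypothesis makes the Brauer character of $M$ real valued, invokes the fact that a completely reducible module with real Brauer character is self-dual, and reads off $V^*$ among the summands $V^x$. You instead first descend to an algebraically closed field (Galois conjugates plus Noether--Deuring) and then run an orbit-counting argument: Brauer's permutation lemma applied both to $\Aut(H)$ and to $\Aut(H)\times\langle\sigma\rangle$ acting on $\mathrm{IBr}(H)$ and on the $p$-regular classes, where the hypothesis forces every $\Aut(H)$-orbit of classes to be $\sigma$-closed, hence every $\Aut(H)$-orbit of irreducible Brauer characters to be $\sigma$-closed as well. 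Both routes ultimately exploit the same reality phenomenon; the paper's version avoids your descent step and the bookkeeping with the Brauer character table at the cost of citing the ``real Brauer character implies self-dual'' result, while yours is closer to first principles (the permutation lemma is elementary) but must carry the extra reductions you flag: semisimplicity of $V\otimes_k\bar k$ (which holds because splitting fields of finite groups are separable, generated by roots of unity of order prime to the characteristic), the identity $\chi_{V^*}(g)=\overline{\chi_V(g)}=\chi_V(g^{-1})$ for Brauer characters, and the observation that inversion and automorphisms preserve $p$-regularity. All of these checks do go through, and your concluding step (equal orbit counts force each $A$-orbit on $\mathrm{IBr}(H)$ to be a single $\Aut(H)$-orbit) is sound, so the proof is complete.
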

\begin{proof}
Write $G=H\rtimes \Aut(H)$. We can view $H$ as a  subgroup of  $G$. 
Since $H$ is normal in $G$, from~\cite[Theorem~$8.6$]{GN1} we see
that  the module $M=(V_H^G)_H$ is completely reducible with irreducible
summands $V^x$, for $x\in G$. Furthermore, since  
every element of $H$ is conjugate to its inverse via an element of
$G$, we obtain that the Brauer character of $M$ is real valued. Now,
from~\cite[Theorem~$1.19$ and Lemma~$2.2$]{GN1}, we see that 
completely reducible modules with real Brauer characters are self dual
and hence  $M$ is self dual, that is, $M^*\cong M$. Hence $V^*$ is an
irreducible direct summand of $M$, and so $V^* \cong V^x$ for some $x\in G$. 
\end{proof}

\begin{lemma}\label{idea} Suppose that every element of $H$ is
  conjugate to its inverse via an element of $\Aut(H)$. If $H = AB$ is
  a coprime factorization and $V$ is a non-trivial irreducible $kH$-module, then
  either $A$ fixes no element of $V\setminus\{0\}$ or $B$ fixes no
  element of $V\setminus\{0\}$. 
\end{lemma}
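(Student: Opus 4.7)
The plan is to argue by contradiction, supposing that both $A$ and $B$ fix a nonzero vector of $V$. The two key ingredients available are Lemmas~\ref{lemma1} and~\ref{GTlemma}: the former provides the only ``exchange'' mechanism between $V$ and $V^*$, while the latter, via the conjugate-to-inverse hypothesis on $H$, lets us realise $V^*$ as a twist $V^x$ of $V$ by some $x\in\Aut(H)$.

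The first step is to translate the hypothesis on $B$ into a statement about $V^*$. Since $V^x$ has the same underlying space as $V$ but with $h$ acting as $x(h)$ did originally, for any subgroup $C\le H$ one has $(V^*)^C\cong (V^x)^C=V^{x(C)}$; hence $C$ fixes a nonzero vector of $V^*$ if and only if $x(C)$ fixes a nonzero vector of $V$. Applying this with $C=x^{-1}(B)$ shows that $x^{-1}(B)$ fixes a nonzero vector of $V^*$, because by assumption $B=x(x^{-1}(B))$ fixes a nonzero vector of $V$.

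Next, I would observe that $H=A\cdot x^{-1}(B)$ is itself a coprime factorization. Indeed, $|H:x^{-1}(B)|=|H:B|$ since $x^{-1}\in\Aut(H)$, so the indices $|H:A|$ and $|H:x^{-1}(B)|$ are coprime with both factors proper; the standard inequality $[H:C\cap D]\le [H:C][H:D]$ combined with the fact that $[H:C\cap D]$ is a common multiple of $[H:C]$ and $[H:D]$ then forces equality $[H:C\cap D]=[H:C][H:D]$ in the coprime case, and this is equivalent to $A\cdot x^{-1}(B)=H$.

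Finally, Lemma~\ref{lemma1} applied to the coprime factorization $H=A\cdot x^{-1}(B)$ with the non-trivial irreducible $kH$-module $V$ yields that either $A$ fixes no nonzero vector of $V$ or $x^{-1}(B)$ fixes no nonzero vector of $V^*$. The first alternative contradicts the standing assumption on $A$, while the second contradicts the translation established in the first step, producing the required contradiction. The only point requiring real care is bookkeeping the direction of the twist $x$ (in particular, applying it to $B$ rather than to $A$); beyond this, no deeper obstacle is anticipated, since Lemmas~\ref{lemma1} and~\ref{GTlemma} carry essentially all of the substance.
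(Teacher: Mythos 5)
Your proposal is correct and follows essentially the same route as the paper: invoke Lemma~\ref{GTlemma} to write $V^*\cong V^x$, observe that coprimality of the indices forces $H=A\cdot B^{x^{\pm1}}$ to be a (coprime) factorization, transport the fixed vector of $B$ on $V$ to a fixed vector of the twisted copy of $B$ on $V^*$, and contradict Lemma~\ref{lemma1}. The only differences are notational (the direction of the twist) and that you spell out the standard index argument for $H=A\cdot x^{-1}(B)$, which the paper leaves implicit.
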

\begin{proof}
From Lemma~\ref{GTlemma}, $V^*\cong V^x$ for some $x\in \Aut(H)$. As
$H=AB$ is a coprime factorization, we obtain $H = AB^x$. 

We argue by contradiction and we assume that $A$ fixes the non-zero
vector $a$ of $V$ and that $B$ fixes the non-zero vector $b$ of $V$.
So  $B^x$ fixes the vector $b^x$ of $V^x$ and, as $V^x\cong V^*$, the
group $B^x$ fixes some non-zero vector of $V^*$. This contradicts
Lemma~\ref{lemma1} applied to $G=AB^x$, and so the lemma is proven. 
\end{proof}

In the following proposition we prove Theorem~\ref{thm2} in the case
that the group $H$ is a non-abelian simple group. 
\begin{proposition}\label{thmHsimple}
Let $H$ be a non-abelian simple group, $V$ be a non-trivial
irreducible $kH$-module, and $a$ and $b$ be in $V\setminus\{0\}$. Then
the sizes of the $H$-orbits $a^H$ and $b^H$ have a non-trivial common
factor.  
\end{proposition}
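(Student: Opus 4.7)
The plan is to argue by contradiction using Lemma~\ref{idea}. Assume $\gcd(|a^H|,|b^H|)=1$ and set $A=\cent Ha$ and $B=\cent Hb$. Since $V$ is a non-trivial irreducible $kH$-module and $H$ is simple, no non-zero vector of $V$ is fixed by $H$; hence $|a^H|,|b^H|\ge 2$ and $A,B$ are proper subgroups of $H$. A standard counting argument shows that the coprime hypothesis forces $H=AB$: the integer $|H:A\cap B|$ is divisible by both $|H:A|$ and $|H:B|$, hence by their product, while simultaneously $|H:A\cap B|=|H:A|\cdot|A:A\cap B|\le |H:A|\cdot|H:B|$, so equality holds throughout and one deduces $|AB|=|H|$. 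Thus $H=AB$ is a coprime factorization in which $A$ fixes the non-zero vector $a$ and $B$ fixes the non-zero vector $b$.

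By Lemma~\ref{idea}, it is enough to verify that every element of the non-abelian simple group $H$ is conjugate to its inverse via some element of $\Aut(H)$. This is well known and handled by a case-by-case analysis based on the Classification of Finite Simple Groups. For $H=\Alt(n)$, every element is conjugate to its inverse in $\Sym(n)\le\Aut(H)$, since conjugacy in $\Sym(n)$ is determined by cycle type. For $H$ a finite simple classical group, the claim is exactly Theorem~\ref{thm8}. For $H$ an exceptional group of Lie type, the property is known from the literature. For $H$ a sporadic simple group, one reads the property off the ATLAS character tables; alternatively, one needs only to verify it for those $H$ that actually admit a coprime factorization with both factors fixing a non-zero vector, and such triples $(H,A,B)$ form the finite list compiled in Table~\ref{table}.

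The main technical input is therefore the self-inversion property for finite simple groups via $\Aut(H)$; this is routine for alternating groups, is supplied for classical groups by Theorem~\ref{thm8}, and requires only finite checks for the exceptional and sporadic cases. Once this property is in hand, Lemma~\ref{idea} applied to the coprime factorization $H=AB$ forces either $A$ or $B$ to fix no non-zero vector of $V$, contradicting the construction of $A$ and $B$ as stabilizers of $a$ and $b$. This contradiction establishes the proposition.
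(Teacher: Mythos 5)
Your reduction to a coprime factorization $H=\cent Ha\,\cent Hb$ is correct and matches the paper, and the treatment of alternating and classical groups via Theorem~\ref{thm8} and Lemma~\ref{idea} is exactly the paper's argument. The gap is in your blanket claim that \emph{every} finite simple group has the property that each element is conjugate to its inverse via $\Aut(H)$, to be ``read off the ATLAS'' in the sporadic case. This property is simply false for the three sporadic groups that actually survive to this point. By Table~\ref{table}, the sporadic groups admitting a coprime factorization are $M_{11}$, $M_{23}$ and $M_{24}$; all three have trivial outer automorphism group and non-real conjugacy classes (e.g.\ in $M_{11}$ an element $x$ of order $11$ satisfies $x^{-1}\notin x^{M_{11}}$ because $-1$ is not a square mod $11$ and the classes $11A$, $11B$ are not fused by any automorphism; likewise the classes of order $7$, $14$, $21$, $23$ in $M_{23}$ and $M_{24}$ carry the irrationalities $b7$, $b23$). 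So Lemma~\ref{idea} cannot be applied to exactly the groups where your argument still needs it, and your proof does not go through there.

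The paper closes this gap with separate arguments for $M_{11}$, $M_{23}$ and $M_{24}$: for most of their maximal coprime factorizations one locates a subgroup $S$ of one stabilizer giving an \emph{exact} coprime factorization $H=S\,\cent Hb$ and invokes Lemma~\ref{exact} (which rests on Lemma~\ref{lemma1} and complete reducibility of $V_S$, not on self-inversion); for the factorization $M_{24}=M_{23}\cdot 2^6.3.\Sym(6)$ one pins down the stabilizers exactly, realizes $V$ as a quotient of the $24$-point permutation module, and uses the known submodule structure of that module (Mortimer, Klemm) in each characteristic to derive a contradiction. Your proposal would be complete if you replaced the incorrect ``well known'' claim with an analysis of this kind for $M_{11}$, $M_{23}$ and $M_{24}$. (Your parenthetical remark that the exceptional groups need no self-inversion argument because they admit no coprime factorizations is correct and is also how the paper disposes of them.)
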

\begin{proof}
We argue by contradiction and we assume that $a^H$ and $b^H$ have
relatively prime sizes. Since $|a^H|=|H:\cent H a|$ and $|b^H|=|H:\cent
H b|$ are coprime, $H=\cent H a\cent H b$ is a coprime
factorization. Now we use the classification of the finite simple
groups. 

If $H$ is a classical group, we see from Theorem~\ref{thm8} that every
element of $H$ is conjugate to its inverse via an element of
$\Aut(H)$. Clearly, the same result holds true if $H$ is an
alternating group. Therefore, if $H$ is a classical group or an
alternating group, we obtain a contradiction  from Lemma~\ref{idea}
(applied with $A=\cent H a$ and $B=\cent H b$). This shows that $H$ is
either an exceptional group of Lie type or a sporadic simple
group. From Table~\ref{table}, we see that exceptional groups of Lie
type do not admit coprime factorizations. Therefore, $H$ is a sporadic
simple group. Again, using Table~\ref{table}, we see that the only
sporadic simple groups admitting a coprime factorization are $M_{11},
M_{23}$ and $M_{24}$. In the rest of this proof we consider separately
each of these groups. Note that Table~\ref{table} determines all
possible coprime factorizations $H=AB$ with $A$ and $B$ maximal in
$H$.

\smallskip

\noindent\textsc{Case $H=M_{11}$. }We first consider the case that
$\cent H a \subseteq A=L_2(11)$ and $\cent H b \subseteq B=M_{10}$. We have $|H:A|=12$,
$|H:B|=11$ and $A\cap B\cong \Alt(5)$. 
As $2$ and $3$ divide $|H:A|$ and
as $B\cong \Alt(6).2$ has no subgroups of index $5$, in order to have
$\gcd(|H:A|,|H:\cent H b |)=1$, we must have $B=\cent H b$. Since $|H:C_H(b)|=|H:B|=11$
and $|H:\cent H a|$ is coprime to $11$,
the group $\cent H a$ has order divisible by $11$ and hence it contains a
Sylow $11$-subgroup $S$. Now we have
$H=S\cent H b$ with $S\cap \cent H b=1$ and hence the result follows from Lemma~\ref{exact}.

Now we consider the case that $\cent H a\subseteq A=L_2(11)$ and $\cent H b\subseteq B=
M_9.2$. We have $|H:A|=12$, $|H:B|=55$ and $A\cap B\cong \Alt(4)$. 
Since $|A\cap B|=|H:A|=12$, by coprimality we have
$B=\cent H b$. Therefore, $|H:\cent H b|=55$ and $55$ divides
$|\cent H a|$. From the subgroup structure of $A=L_2(11)$, we see that
$\cent H a$ contains a
subgroup $S$ of order $55$. In particular, 
$H=S\cent H b$ and $S\cap \cent H b=1$, and the result follows from Lemma~\ref{exact}.

\smallskip

\noindent\textsc{Case $H=M_{23}$. }In this case we have three maximal
factorizations to consider. We start by studying the case that 
$\cent H a\subseteq A=M_{22}$ and $\cent H b\subseteq B=23:11$. As $|H:A|=23$ and $\gcd(|H:\cent H a|,|H:\cent H b|)=1$, we have that $23$
divides $\cent H b$. Let $S$ be a Sylow $23$-subgroup of $\cent H b$. Now,
$H=\cent H a S$ and $\cent H a\cap S=1$, and the result follows as usual from Lemma~\ref{exact}. 

The other two maximal coprime factorizations of $M_{23}$ in
Table~\ref{table} are exact and hence the result follows again from Lemma~\ref{exact}.

\smallskip

\noindent\textsc{Case $H=M_{24}$. }We have $\cent H a\subseteq A=M_{23}$,
$\cent H b\subseteq B=2^6.3.\Sym(6)$, $|H:A|=24$, $|H:B|=1771=7\cdot 11\cdot 23$ and $|A\cap
B|=5760=2^7\cdot 3^2\cdot 5$. 
Since $|H:\cent H a|$ is
divisible by $2$ and $3$, and $|H:\cent H b|$ is relatively prime to
$|H:\cent H a|$, we 
have that $\cent H b$ contains a Sylow $2$-subgroup and a Sylow
$3$-subgroup of $H$. Thus $|\cent H b |$ is divisible by
$2^{10}\cdot 3^3$ and $|B:\cent H b|\leq 5$. Since $B$ has no subgroup
of index $5$, we obtain $B=\cent H b$. With a similar argument applied to
$\cent H a$, we get that $7\cdot 11\cdot 23$ divides $|\cent H
a|$. From~\cite{ATLAS}, 
we see that $M_{23}$ has no proper subgroup of order divisible by $7,11$ and
$23$. Therefore $A=\cent H a$.

Let $M$ be the permutation module $k_A^H$. (Thus $M$ is the
permutation module of the $2$-transitive action of $H$ on a set
$\Omega$ of size $24$. In particular, $M$ is one of the modules
investigated by Mortimer in~\cite{Mortimer}.) Since $A$ fixes the
non-zero vector $a$ of $V$, we have $\Hom_{kA}(k,V_A)\neq 0$ and so, from
Frobenius reciprocity, we obtain $\Hom_{kH}(M,V)\neq 0$. Hence the
$kH$-module $V$ is  isomorphic to $M/W$, for some maximal
$kH$-submodule $W$ of 
$M$. Let $(e_{\omega})_{\omega\in \Omega}$ be the canonical basis of
$M$ and let $p$ be the characteristic of $k$. 

Let $e=\sum_{\omega\in
  \Omega}e_\omega$, $\mathcal{C}=\langle e\rangle$ and
$\mathcal{C}^\perp=\{\sum_{\omega\in \Omega}c_\omega 
e_\omega\mid \sum_{\omega\in\Omega}c_\omega=0\}$. Clearly,
$\mathcal{C}$ and $\mathcal{C}^\perp$ are submodules of $M$. Assume
that $p\neq 2,3$.  From~\cite[Table~$1$]{Mortimer}, we see that  the
module $M$ is completely reducible,
$M=\mathcal{C}\oplus\mathcal{C}^\perp$ and $\mathcal{C}^\perp$ is an
irreducible $kH$-module. Since $V$ is a non-trivial $kH$-module, we
obtain $V\cong \mathcal{C}^\perp$. Since $M$ is self dual, we obtain
that $M^*\cong M$ and hence $V^*\cong V$. Therefore, since $B$ fixes
the non-zero vector $b$ of $V$, it also fixes a
non-zero vector of $V^*$, but this contradicts Lemma~\ref{lemma1}. 

Now assume $p=3$.
From~\cite[Lemma~$2$]{Mortimer}, we have that $\mathcal{C}\subseteq
\mathcal{C}^\perp$ and $\mathcal{C}^\perp$ is the unique maximal
submodule of $M$. Therefore $W=\mathcal{C}^\perp$ and $V\cong
M/\mathcal{C}^\perp\cong\mathcal{C} $ is a trivial $kH$-module, a
contradiction. Therefore it 
remains to consider the case $p=2$.

Since $2$ divides $|\Omega|=24$, we have $\mathcal{C}\subseteq
\mathcal{C}^\perp$. From~\cite[Beispiele~$2$~$b)$]{Klemm}, we see
that $\mathcal{C}^\perp$ is the unique maximal submodule of $M$ and
hence $V\cong M/\mathcal{C}^\perp\cong\mathcal{C}$ is a trivial
$kH$-module, a contradiction.
\end{proof}

Now we are ready to prove Theorems~\ref{thm2},~\ref{thm:101} and Corollary~\ref{cornew}. 

\begin{proof}[Proof of Theorem~\ref{thm2}]We argue by
contradiction and we let $H$ be a minimal (with respect to the group
order) counterexample. Let $a,b\in V\setminus\{0\}$ with $|a^H|$
relatively prime to $|b^H|$.  

If $H$ is a cyclic group of prime order $p$, then every $H$-orbit on
$V\setminus\{0\}$ has size $p$, a contradiction. Similarly, from
Proposition~\ref{thmHsimple} we see that the group $H$ is not a non-abelian 
simple group. Thus $H$ has a 
non-identity proper normal subgroup $N$. From the Clifford correspondence,
$V_N=W_1\oplus \cdots \oplus W_k$ with $W_i$ a homogeneous $kN$-module,
for each $i\in \{1,\ldots,k\}$, and with $H$ acting transitively on
the set of direct summands $\{W_1,\ldots,W_k\}$ of $V$. (A
module is said to be \emph{homogeneous} if it is the direct sum of
pairwise isomorphic submodules.)  Write $a=\sum_{i=1}^ka_i$ and
$b=\sum_{i=1}^kb_i$ with $a_i,b_i\in W_i$, for each $i\in
\{1,\ldots,k\}$. Let $i$ and $j$ be in $\{1,\ldots,k\}$ with $a_i\neq
0$ and $b_j\neq 0$. Since $H$ acts transitively on
$\{W_1,\ldots,W_k\}$, there exist $h$ and $k$ in $H$ with $W_i^h=W_1$
and $W_j^k=W_1$. In particular, replacing $a$ and $b$ by $a^h$ and
$b^k$ if necessary, we may assume that $i=j=1$. Since $N$ is a normal
subgroup of $H$, we get that $|a^N|$  divides
$|a^H|$ (respectively $|b^N|$ divides $|b^H|$). Furthermore, since $N$ acts trivially on
$\{W_1,\ldots,W_k\}$, we obtain that $\cent N a\subseteq \cent N
{a_1}$ and $\cent N b\subseteq \cent N {b_1}$, that is, $|a_1^N|$
divides $|a^N|$ and $|b_1^N|$ divides $|b^N|$. In particular,
$|a_1^N|$ and $|b_1^N|$ are coprime. 

As $W_1$ is a homogeneous $kN$-module, there
exists an irreducible 
$kN$-module $U$ such that $W_1=U_1\oplus
\cdots\oplus U_r$ with $U_i\cong U$, for each $i\in
\{1,\ldots,r\}$. Assume that $U$ is the trivial $kN$-module. So, $N$ acts trivially on $W_1$. Since $H$ permutes
transitively the set of direct summands $\{W_1,\ldots,W_k\}$ and since $N$ is normal in $H$, we obtain that $N$ acts 
trivially on $V$ and $N=1$, a contradiction. Therefore $U$ is a non-trivial irreducible $kN$-module and, in particular, $|a_1^N|,|b_1^N|>1$.

Write $a_1=\sum_{i=1}^rx_i$ and
$b_1=\sum_{i=1}^ry_i$ with $x_i,y_i\in U_i$, for each $i\in
\{1,\ldots,r\}$. Since $N$ stabilizes the direct summands
$\{U_1,\ldots,U_r\}$ of $U$, we obtain that $\cent N
   {a_1}=\cap_{i=1}^r\cent N {x_i}$ and $\cent N
   {b_1}=\cap_{i=1}^r\cent N {y_i}$. In particular, for each $i\in
   \{1,\ldots,r\}$, we have that $|x_i^N|$ divides $|a_1^N|$ and
   $|y_i^N|$ divides $|b_1^N|$.
Since $a_1,b_1\neq 0$, there exist $i,j\in \{1,\ldots,r\}$ with
$x_i\neq 0$ and $y_j\neq 0$. Fix $\varphi_i:U_i\to U$ and
$\varphi_j:U_j\to U$ 
two $N$-isomorphisms and write $x=\varphi_i(x_i)$ and
$y=\varphi_j(y_j)$. In particular, $x$ and $y$ are non-zero elements
of the non-trivial irreducible $kN$-module $U$. Furthermore, since
$\varphi_i$ and 
$\varphi_j$ are $N$-isomorphisms, we obtain $\cent N {x_i}=\cent N x$
and $\cent N {y_j}=\cent N y$ and thus $|x^N|$ and $|y^N|$ are
coprime. This contradicts the minimality of $H$ and hence the theorem
is proved.
\end{proof}

\begin{proof}[Proof of Corollary~\ref{cornew}]
We argue by contradiction and we assume that $V$ is a non-trivial finite dimensional $kH$-module and that $a$ and $b$ are elements of $V$ with $V=\langle a^h\mid h\in H\rangle=\langle b^h\mid h\in H\rangle$ and with $\gcd(|a^H|,|b^H|)=1$. Now we argue by induction on $\dim_kV$. If $V$ is irreducible, then the result follows from Theorem~\ref{thm2}. So, we assume that this is not the case.  Let $W$ be a minimal submodule of $V$ and  suppose that $V/W$ is non-trivial. Clearly, $(a+W)^H$ and $(b+W)^H$ span $V/W$ and hence, by induction,  the lengths of the orbits of $(a + W)^H$ and $(b + W)^H$ are not coprime. As $|(a+W)^H|$ divides $|a^H|$ and $|(b+W)^H|$ divides $|b^H|$, we have that $|a^H|$ and $|b^H|$ are not coprime. 

Suppose now that $V/W$ is the trivial $kH$-module. We claim that in this case $V$ splits over $W$, that is, $V=\langle v\rangle\oplus W$ for some element $v$ of $V$ fixed by $H$. If the characteristic of $V$ is zero, then $V$ is semisimple and our claim is immediate. Suppose that $V$ has characteristic $p>0$. Replacing $a$ by $b$ if necessary, we may assume that $p\nmid |a^H|$ and hence $\cent H a$ contains a Sylow $p$-subgroup $P$ of $H$.
We claim that $V\cong k \oplus W$, that is, $V$ splits over $W$. The module $V$ corresponds to an element $\delta$ of $\mathrm{Ext}_G ^1(k,W)\cong H^1(G,W^*)$ (see~\cite[Section~(III)~$2$]{Brown} for the last isomorphism). On the other hand, $V$ splits over $W$ as a $kP$-module because $P\subseteq \cent H a$ and $a\notin W$. Thus $\delta=0$  in $H^1(P,W^*)$.
However, from~\cite[Theorem~$10.3$]{Brown}, we see that the restriction map $\textrm{res}:H^1(G,W^*) \rightarrow H^1(P,W^*)$ is injective. So $\delta=0$ is $H^1(G,W^*)$ and $V$ splits over $W$. In particular, $H$ fixes a vector $v\in V\setminus W$ and $V = \langle v\rangle \oplus W$. 

Write $a=\lambda v+a'$ and $b=\mu v+b'$ with $\lambda,\mu\in k$, $a'\in W$ and $b'\in W$. Clearly, $a',b'\neq 0$ because $a^H$ and $b^H$ span $V$ and $V$ is not the trivial module. Similarly, $W$ is not the trivial $kH$-module. Since $H$ fixes $v$, we have $\cent H {a}=\cent H {a'}$ and $\cent H b=\cent H {b'}$ and hence $|a'^H|,|b'^H|$ are relatively prime. This contradicts Theorem~\ref{thm2} applied to the irreducible module $W$ and to the vectors $a',b'$.  
\end{proof}

\begin{proof}[Proof of Theorem~\ref{thm:101}]
Write $A = \cent H a$.
Since $H$ is $p$-soluble and $A$ contains a Sylow $p$-subgroup of $H$, the group $H$ contains a $p'$-subgroup $L$ with $H=AL$. (For example, $H=AL$ for each Hall $p'$-subgroup $L$ of $H$.) Now, let $L$ be any $p'$-subgroup of $H$ with $H=AL$ and define $\psi_L:V\to V$ by setting 
$$\psi_L(v)=\sum_{x\in L}v^x.$$
We claim that $\cent V L=\psi_L(V)$. For $v\in V$ and $y\in L$, we have $$\psi_L(v)^y=\left(\sum_{x\in L}v^{x}\right)^y=\sum_{x\in L}v^{xy}=\sum_{x\in L}v^x=\psi_L(v).$$ So $\psi_L(V)\subseteq \cent V L$. Conversely, if $v\in \cent V L$, then 
$$\psi_L(v)=\sum_{x\in L}v^x=\sum_{x\in L}v=|L|v.$$ As $|L|$ is coprime to $p$, we have $v=\psi_L(v/|L|)\in \psi_L(V)$.

We now show that $\cent V H=\cent V L$. As $L\subseteq H$, we have $\cent V H\subseteq \cent V L$.  As $H=AL$, we have $a^{H}=a^{AL}=a^L$. So, for every $v\in a^H$, the image $\psi_L(v)$ is a multiple of the sum of the elements of $a^L=a^H$. We deduce that $\psi_L(v)$ is $H$-invariant, that is, $H$ fixes $\psi_L(v)$. Since $a^H$ spans $V$, we obtain that $H$ fixes every element of $\psi_L(V)=\cent V L$, that is, $\cent VL\subseteq \cent VH$.

Now we are ready to prove~$(a)$. Since $a^H=a^L$ and $a^H$ spans $V$, the vector space $V$ is generated by $a$ as a $kL$-module. Thus, the map $\pi:kL\to V$, given by $\pi(\sum_{x\in L}\alpha_xx)=\sum_{x\in L}\alpha_xa^x$, defines a $kL$-homomorphism of $kL$ onto $V$. Since $p$ is coprime to $|L|$, by Maschke's theorem the $kL$-module $V$ is isomorphic to a direct summand of the group-algebra $kL$. Therefore $\dim \cent V H=\dim \cent V L\leq \dim \cent {kL}{L}=1$.

We now prove~$(b)$. Let $b\in V$ with $\gcd(|a^H|,|b^H|)=1$. Write
$B=\cent V b$ and observe that $H=AB$. Since $a$ is fixed by a Sylow $p$-subgroup of $H$, we see that $p$ does not divide $|H : A| = |B :
(A \cap B)|$ and so $A \cap B$ contains a Sylow $p$-subgroup of $B$. As $H$ is $p$-soluble, we get
that $B$ is $p$-soluble and that $B$ contains a $p$-complement $L$, say. So, $B=(A\cap B)L$ and $H=AB=AL$. In particular, we are in the position to apply the first part of the proof to $L$. Thus $b\in \cent V B\subseteq \cent V L=\cent V H$. 
\end{proof}

\section{Proof of Theorem~\ref{thm1}}\label{secthm1}

In this section we use Theorem~\ref{thm2} to prove
Theorem~\ref{thm1}. We start by showing that the hypothesis
``completely reducible'' is essential.

\begin{example}\label{Exa1}{\rm 
Let $p$ be an odd prime, $V$ be the $2$-dimensional vector space of row vectors over a field $\mathbb{F}_p$ of size $p$, $\lambda$ be a generator of the multiplicative group $\mathbb{F}_p\setminus\{0\}$ and \[
H=\langle g,h\rangle\textrm{ with }g=
\left(
\begin{array}{cc}
1&0\\
1&1\\
\end{array}
\right), h=
\left(
\begin{array}{cc}
\lambda&0\\
0&1\\
\end{array}
\right).
\]
The group $H$ has order $p(p-1)$ and has $p+1$ orbits on $V$. Namely, for each $a\in \mathbb{F}_p\setminus\{0\}$, the set $\{(x,a)\mid x\in\mathbb{F}_p\}$ is an $H$-orbit of size $p$. Furthermore, $\{(0,0)\}$ and $\{(a,0)\mid a\in \mathbb{F}_p\setminus\{0\}\}$ are $H$-orbits of size $1$ and $p-1$, respectively. Write $e_1=(\lambda,0)$ and $e_2=(0,1-\lambda)$. We have $\cent H {e_1}=\langle g\rangle$, $\cent H{e_2}=\langle h\rangle$ and $\cent H{e_1+e_2}=\langle gh\rangle\neq \cent H{e_1}\cap \cent H {e_2}=1$. 

Here is an example for the prime $p=2$. Let $H=\Sym(4)$ be the symmetric group of
  degree $4$ and $M$ the permutation module with basis
  $e_1,e_2,e_3,e_4$ over a field $k$ of size $2$. It is easy to see that
  the only $kH$-submodules of $M$ are $0$, $M_1=\langle e_1+e_2+e_3+e_4\rangle$,
  $M_2=\langle e_1+e_2,e_1+e_3,e_1+e_4\rangle$ and $M$, and that $0\subset
  M_1\subset M_2\subset M$, that is, $M$ is uniserial. Let $V$ be the $kH$-module
  $M/M_1$. Clearly, $H$ acts faithfully on $V$ and, as $M_2/M_1$ is
  the unique proper submodule of $V$, we have that $V$ is not
  completely reducible. Write $a=e_1+M_1$ and
  $b=e_1+e_2+M_1$. We have $\cent H a=\langle (2,3),(3,4)\rangle$ and
  $\cent H b=\langle (1,2),(1,3,2,4)\rangle$ and so $a^H$ has size $4$
  and $b^H$ has size $3$. Finally, $\cent H {a+b}=\cent H
  {e_2+M_1}=\langle (1,3),(3,4)\rangle\neq \langle (3,4)\rangle=\cent
  H a\cap \cent H 
  b$. Furthermore, the orbits of $H$ on $V$ have sizes $1,3$ and $4$.} 
\end{example}

We note that an example similar to Example~\ref{Exa1} is
in~\cite[Example~$1$]{Isaacs3}.

\begin{proof}[Proof of Theorem~\ref{thm1}]
As $V$ is completely
reducible, we have $V=\cent V H \oplus W$ for some direct summand $W$
of $V$. Clearly, replacing $V$ by $W$ if necessary, we may assume that
$\cent V H=0$, that is, $H$ fixes no non-zero vector of $V$. 
There is also no loss in assuming that $V$ is generated by 
$a$ and $b$ as a $kH$-module. Let $V(a)$ and $V(b)$ denote the $kH$-submodules generated by $a$ and $b$, respectively. We claim that $V(a) 
\cap V(b) = 0$, whence  $V=V(a)\oplus V(b)$ and $\cent H {a+b} = \cent H a \cap \cent Hb$
as required.   

Suppose not.  Let $S$ be a simple $kH$-submodule of
$V(a)\cap V(b)$. Since $\cent H V=0$, $H$ does not act trivially on $S$.
Since $V$ is completely reducible, $V = S \oplus T$ as $kH$-modules.
Let $\pi$ denote the projection of $V$ onto $S$ with kernel $T$.
Clearly, $|a^H|$ is a multiple of $|\pi(a)^H|$ and similarly for $b$.
Since $S \le V(a)$, $\pi(a) \ne 0$ (and similarly for $b$).
Thus, the lengths of the $H$-orbits in $S$ of  $\pi(a)$ and $\pi(b)$
are coprime contradicting Theorem~\ref{thm2}.  
\end{proof}

We point out that from Theorem~\ref{thm1} we can easily deduce the
following well-known result of Yuster (see~\cite{Yuster} or
\cite[3.34]{Isaacs}). 

\begin{corollary}
Let $H$ and $A$ be finite groups with $|H|$ relatively prime to $|A|$
and with $H$ acting as a group of automorphisms on $A$. If, for $a,b\in A$, the
$H$-orbits $a^H$ and $b^H$ have relatively prime
size, then $H$ has an orbit of size $|a^H||b^H|$. 
\end{corollary}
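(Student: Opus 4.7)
My plan is to deduce the Corollary from Theorem~\ref{thm1}. The first step is the standard double-counting observation that, because $\gcd(|a^H|,|b^H|)=1$, one has $H = \cent H a \cdot \cent H b$, and consequently $[H:\cent H a \cap \cent H b] = [H:\cent H a]\cdot[H:\cent H b] = |a^H|\cdot|b^H|$. Therefore, to produce an $H$-orbit of the required size on $A$, it suffices to exhibit an element $c\in A$ with $\cent H c \subseteq \cent H a \cap \cent H b$: such a $c$ automatically satisfies $|c^H| = |a^H|\cdot|b^H|$.

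The heart of the argument is the case where $A$ is abelian. Write $A$ additively and decompose it into its $H$-invariant Sylow components $A = \bigoplus_p A_p$, and write $a = \sum_p a_p$, $b = \sum_p b_p$ with $a_p,b_p \in A_p$. Because $|a_p^H|$ divides $|a^H|$ (and similarly for $b$), coprimality forces $\gcd(|a_p^H|,|b_p^H|)=1$ for every prime $p$. Since $p \nmid |H|$ for every prime $p$ dividing $|A|$, Maschke-type arguments in the coprime-action setting reduce the task on each $A_p$ to Theorem~\ref{thm1} applied to the elementary abelian $\mathbb{F}_pH$-factors of $A_p$, yielding $\cent H {a_p+b_p} = \cent H{a_p}\cap \cent H{b_p}$. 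Summing across primes gives $\cent H{a+b} = \cent H a \cap \cent H b$, so $c = a+b$ does the job.

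For general (possibly non-abelian) $A$, I would reduce to the abelian case by induction on $|A|$: choose a minimal $H$-invariant normal subgroup $N$ of $A$ and apply the inductive hypothesis to whichever of $N$ or $A/N$ inherits both of the coprime orbit sizes, then lift the resulting element back to $A$. The main obstacle I anticipate here is the lifting step, where given $\bar c \in A/N$ with $\cent H{\bar c} \subseteq \cent H a \cap \cent H b$ one must produce a lift $c \in A$ satisfying the same centralizer containment. This is the classical province of coprime-action machinery (Glauberman's lemma and Schur--Zassenhaus), which is available precisely because $\gcd(|A|,|H|)=1$.
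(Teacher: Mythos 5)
Your opening reduction is fine: coprimality gives $H=\cent Ha\,\cent Hb$ and $[H:\cent Ha\cap\cent Hb]=|a^H|\,|b^H|$. But note that exhibiting $c$ with $\cent Hc\subseteq\cent Ha\cap\cent Hb$ only gives $|c^H|\ge|a^H|\,|b^H|$; you need equality of subgroups, which is automatic for $c=a+b$ (since $\cent Ha\cap\cent Hb$ always fixes $a+b$) but not for a lifted element in your non-abelian step. The serious problems are in the two reductions that carry the weight of the argument. First, a Sylow component $A_p$ is an abelian $p$-group but generally not elementary abelian, so it is not an $\mathbb{F}_pH$-module and Theorem~\ref{thm1} does not apply to it; and no ``Maschke-type argument'' transfers the orbit problem to the elementary abelian factors, because stabilizers are not controlled along the filtration $A_p\supseteq pA_p\supseteq\cdots$ (coprime action only gives $\cent{A_p/pA_p}{h}=(\cent{A_p}{h}+pA_p)/pA_p$, so $h$ may fix $a_p$ modulo $pA_p$ without fixing $a_p$). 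The assertion that a coprime $H$-set $A$ is $H$-isomorphic to the direct product of its elementary abelian chief factors is a genuine theorem --- the Hartley--Turull lemma --- and it is precisely what the paper cites. Second, the induction for non-abelian $A$ is not well posed: $a$ and $b$ need not lie in $N$, and in $A/N$ the orbit sizes only divide $|a^H|$ and $|b^H|$ and may collapse to $1$, so neither $N$ nor $A/N$ ``inherits both of the coprime orbit sizes''; an orbit of size $|\bar a^H|\,|\bar b^H|$ in $A/N$ says nothing about an orbit of size $|a^H|\,|b^H|$ in $A$. Glauberman's lemma and Schur--Zassenhaus cannot repair a reduction that has nothing to reduce to.

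For the record, the paper's proof is a two-line appeal to \cite[Lemma~$2.6.2$]{HarTur}: since $\gcd(|H|,|A|)=1$, the $H$-set $A$ may be replaced by an $H$-isomorphic direct product of elementary abelian groups, i.e.\ a completely reducible $H$-module of (possibly mixed) characteristic, and Theorem~\ref{thm1} applies. If you want a self-contained treatment of the abelian case, here is a route avoiding both Hartley--Turull and Theorem~\ref{thm1}: set $K=\cent H{a+b}$; the indices $[K:K\cap\cent Ha]$ and $[K:K\cap\cent Hb]$ divide the coprime numbers $|a^H|$ and $|b^H|$, so $K=(K\cap\cent Ha)(K\cap\cent Hb)$; decompose $A=\cent AK\oplus[A,K]$ by coprime action and write $a=a_0+a_1$, $b=b_0+b_1$ accordingly; then $a_1+b_1=0$, the factor $K\cap\cent Ha$ fixes $a_1$, the factor $K\cap\cent Hb$ fixes $b_1=-a_1$, hence all of $K$ fixes $a_1\in\cent AK\cap[A,K]=0$, giving $K\subseteq\cent Ha\cap\cent Hb$. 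This settles abelian $A$, but the passage from general $A$ to abelian $A$ still requires the Hartley--Turull machinery, which your sketch does not supply.
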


\begin{proof}
As $|H|$ is relatively prime to $|A|$,
from~\cite[Lemma~$2.6.2$]{HarTur} we see that we may assume that $A$
is a direct product of elementary abelian groups. In particular, from
Maschke's theorem, $A$ is a completely reducible $H$-module, possibly
of mixed characteristic. Now the
result follows from Theorem~\ref{thm1}.
\end{proof}

\section{Proof of Theorem~\ref{thm3}}\label{secthm3}

The main ingredient in the proof of Theorem~\ref{thm3} is
Theorem~\ref{thm2} and the positive solution of
Fisman and Arad~\cite{FZ} of Szep's conjecture. 

\begin{theorem}\label{szep}{\rm\cite{FZ}}
Let $G=AB$ be a finite group such that $A$ and $B$ are both
subgroups of $G$ with non-trivial centres. Then $G$ is not a
non-abelian simple group.
\end{theorem}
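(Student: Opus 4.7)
The plan is to argue by contradiction, with the classification of finite simple groups (CFSG) playing the central role. Suppose that $G$ is a non-abelian finite simple group with $G=AB$ and $Z(A), Z(B)\ne 1$. Choose non-identity elements $a\in Z(A)$ and $b\in Z(B)$; since $a$ centralizes $A$ and $b$ centralizes $B$, we have $A\subseteq\cent G a$ and $B\subseteq\cent G b$. The assumed factorization therefore upgrades to the \emph{centralizer factorization}
$$
G=\cent G a\cdot\cent G b,
$$
which is equivalent, by counting orders, to
$$
|a^G|\cdot|b^G|=\frac{|G|}{|\cent G a\cap\cent G b|}\le|G|.
$$
Thus the theorem reduces to showing that such a centralizer factorization of $G$ by non-identity elements is impossible.

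The strategy is to establish the following stronger statement, which immediately contradicts the inequality just derived: \emph{in every non-abelian finite simple group $G$, for every pair of non-identity elements $a,b\in G$ one has $|a^G|\cdot|b^G|>|G|$} — equivalently, every non-trivial conjugacy class of $G$ has size strictly greater than $\sqrt{|G|}$. I would prove this by the usual CFSG case analysis. For alternating groups $A_n$ with $n\ge 5$ use the classical cycle-type formula for centralizer orders, together with an elementary estimate $|\cent{A_n}\sigma|^2<|A_n|$ for $\sigma\ne 1$, checking small $n$ by hand. For groups of Lie type use bounds on centralizers of semisimple and unipotent elements (Borel--de Siebenthal for the former and the dimensions of unipotent conjugacy classes for the latter) to cover all but a finite list of small-rank exceptions, which are handled individually. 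For the 26 sporadic simple groups the claim can be checked directly from the character tables in the \textsc{atlas}.

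The step I expect to be the main obstacle is precisely this case analysis for classical groups of small rank and for the smaller sporadic groups, where the centralizer bound is tightest and uniform estimates break down, forcing case-by-case verification. A potentially more structural alternative would be to enlarge $A$ and $B$ to suitable maximal overgroups and invoke the Liebeck--Praeger--Saxl classification of maximal factorizations of almost simple groups; however, this reduction is delicate, because enlarging a subgroup may destroy non-triviality of its centre. One would therefore have to work directly with the centralizers $\cent G a$ and $\cent G b$, enlarging these (rather than $A$ and $B$) to maximal subgroups, and then cross-check each entry of the LPS list against the further constraint that each factor contains a full centralizer of a non-identity element of $G$ — ruling out every candidate factorization family by family.
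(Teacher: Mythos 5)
There is a genuine gap, and it occurs exactly at the step you rely on most heavily. Your reduction of the factorization $G=AB$ to the centralizer factorization $G=\cent Ga\cdot\cent Gb$ and thence to the inequality $|a^G|\cdot|b^G|\le|G|$ is correct, but the ``stronger statement'' you propose to prove --- that every non-trivial conjugacy class of a non-abelian finite simple group has size exceeding $\sqrt{|G|}$ --- is false, and badly so. In $\Alt(n)$ the class of a double transposition such as $(1\,2)(3\,4)$ has size $3\binom{n}{4}$, a polynomial in $n$, which for $n\ge 9$ is far smaller than $\sqrt{n!/2}$ (already for $n=10$ the class has size $630$ while $\sqrt{|\Alt(10)|}\approx 1347$). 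Likewise the class of a transvection in $\PSL_n(q)$ has size on the order of $q^{2n-2}$, while $\sqrt{|G|}$ is on the order of $q^{n^2/2}$. So non-abelian simple groups abound in elements $a,b\ne 1$ with $|a^G|\cdot|b^G|\ll|G|$, the counting inequality carries essentially no information, and no contradiction can be extracted from it. The content of Szep's conjecture is precisely that the \emph{subgroup-theoretic} condition $G=\cent Ga\cdot\cent Gb$ fails, not the much cruder order estimate; Fisman and Arad's proof is a long CFSG-based analysis of which simple groups could admit such centralizer factorizations, closer in spirit to the ``structural alternative'' you sketch at the end (enlarging the centralizers to maximal overgroups and confronting the classification of factorizations) than to a class-size bound.

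For what it is worth, the paper itself offers no proof of this statement: it is imported verbatim from Fisman and Arad \cite{FZ} and used as a black box. So there is no internal argument to compare yours against; the issue is solely that your proposed route to reprove it collapses at the class-size claim.
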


We start by considering some examples.

\begin{example}\label{exa2}{\rm \textsc{Primitive groups of AS type. }From~\cite{Liv}, we see that the
    sporadic simple group $G=J_1$ has a 
    primitive permutation representation of rank $5$ on a set $\Delta$
    of size $266$. The subdegrees of $G$ are $1,11,12,110$ and
    $132$. In particular, $G$ has two coprime subdegrees. No primitive
    group of smaller rank has this property: the proof of this
    assertion requires the
    classification of the
    finite simple groups~\cite[Remark, p.~33]{PJC}.

Now we give an infinite family of examples. Let $p$ be a prime with
$p\equiv \pm 1\mod 5$ and with $p\equiv \pm 1\mod 
16$, and let
$G=\PSL(2,p)$. From~\cite[Chapter~3, Section~~6]{Suzuki}, we see that $G$ contains a maximal
subgroup $H$ with $H\cong \Alt(5)$. Consider $G$ as a primitive
permutation group
acting on the set $\Delta$ of right cosets of $H$ in $G$. Let $K$ be a
maximal subgroup of $H$ with 
$K\cong \Alt(4)$. As $8$ divides $|G|$, we see from~\cite[Chapter~3,
  Section~6]{Suzuki} that 
$N_G(K)\cong \Sym(4)$. Let $g\in N_G(K)\setminus H$. Then $K=H\cap
H^g$, $|H:H\cap H^g|=5$ and so $G$ has a suborbit of size $5$ on
$\Delta$. Similarly, let now $K$ be a Sylow $5$-subgroup of $H$. Using the
generators of $H$ given in~\cite[Chapter~3, Section~6]{Suzuki},
we see, with a direct computation, that there exists $g\in N_G(K)\setminus
H$ with $K=H\cap H^g$. Therefore $|H:H\cap H^g|=12$ and so $G$ has a
suborbit of size $12$. Furthermore, another explicit computation with
  the generators of $H$ shows that there exists $g\in G$ with $H\cap
  H^g=1$. So $G$ has a suborbit of size $60=5\cdot 12$.}
\end{example}
\begin{example}\label{exa3}{\rm \textsc{Primitive groups of PA
      type. }Let $G$ be a primitive group of AS type on $\Delta$ with non-trivial coprime subdegrees $a$ and $b$. Let
    $\delta,\delta_1$ and $\delta_2$ be in $\Delta$ with
    $a=|\delta_1^{G_\delta}|$, $b=|\delta_2^{G_\delta}|$.
For each $n\geq 2$, the wreath product $W=G\wr\Sym(n)$ endowed with
its natural product action on $\Omega=\Delta^n$ is a primitive group
of PA type. Consider the elements
$\alpha=(\delta,\ldots,\delta)$,
$\beta=(\delta_1,\ldots,\delta_1)$ and
$\gamma=(\delta_2,\ldots,\delta_2)$ of $\Omega$. We have
$|\beta^{W_\alpha}|=a^n$ and $|\gamma^{W_\alpha}|=b^n$ and so $a^n$ and
$b^n$ are two coprime subdegrees of $W$. In many cases there are several pairs of coprime non-trivial subdegrees of $W$. For example, if $n\geq 3$ and $n$ is coprime to $b$, then the point $\beta'= (\delta_1,\delta,\ldots,\delta)$ lies in a $W_\alpha$-orbit of size $na$ and we have also $na$ and $b^n$ as coprime non-trivial subdegrees.

In particular, this construction can be applied with $G$ and $\Delta$ as in
Example~\ref{exa2}.} 
\end{example}

\begin{example}\label{exa4}{\rm \textsc{Primitive groups of TW type. }
In this example we construct a primitive group of TW
type with two non-trivial coprime subdegrees. We start by recalling the structure
and the action of a primitive group of twisted wreath type. We
follow~\cite[Section~$4.7$]{DM}. Let $T$ be a non-abelian simple
group, $H$ be a group, $L$ be a subgroup of $H$ and $\varphi:L\to\Aut(T)$
be a homomorphism with the image of $\varphi$ containing the inner
automorphisms 
of $T$. Let $R$ be a set of left coset representatives of $L$ in $H$
and $T^H$ be the set of all functions $f:H\to T$ from $H$ to
$T$. Clearly, $T^H$ is a 
group under pointwise multiplication, and $H$ acts as a group of
automorphisms on $T^H$ by setting $f^x(z)=f(xz)$, for $f\in T^H$ and for
$x,z\in H$. Write $N=\{f\in T^H\mid f(zl)=f(z)^{\varphi(l)}\textrm{ for all
}z\in H \textrm{ and }l\in L \}$. It is easy to verify that $N$ is an
$H$-invariant  subgroup of $T^H$ isomorphic to $T^{R}$. In fact, the
restriction mapping $f\mapsto f\mid_{R}$ is an isomorphism of $N$ onto
$T^R$. The semidirect product $G=N\rtimes H$ is said to be the \emph{twisted
wreath product} determined by $H$ and $\varphi$. The group $G$ acts on
$\Omega=N$ by setting 
$\alpha^{nh}=(\alpha n)^h$, for each $\alpha\in \Omega$, $n\in N$ and
$h\in H$. (In particular, $N$ acts on $\Omega$ by right multiplication
and $H$ acts on $\Omega$ by conjugation.) From~\cite[Lemma~4.7B]{DM},
we see that if $H$ is a 
primitive permutation group with point stabilizer $L$ and if the image of
$\varphi$ is not a homomorphic image of $H$, then $G$ acts primitively
on $\Omega$ and the socle of $G$ is $N$.

Write $H=\PSL(2,7)^2\rtimes\langle\iota\rangle$ where
$\iota$ is the involutory automorphism of
$\PSL(2,7)^2=\PSL(2,7)\times\PSL(2,7)$ defined by 
$(x,y)^\iota=(y,x)$. Write $L=\{(x,x)\mid
x\in\PSL(2,7)\}\langle\iota\rangle$. The group $H$ is a primitive group
of SD type in its  action on the right cosets of $L$. Let
$T$ be $\PSL(2,7)$ and let $\varphi:L\to \Aut(T)$ be the mapping sending
$(x,x)\iota^i$ to the 
inner automorphism of $T$ given by $x$, for each $x\in \PSL(2,7)$ and
$i\in\{0,1\}$. Clearly, $\varphi$ is a 
homomorphism whose image contains the inner automorphisms of
$T$. Let $G$ be the twisted wreath product determined by $H$ and
$\varphi$ and let $N$ be its socle as described above. Since $H$ has
no homomorphic image  
isomorphic to $T$, we see that $G$ is a primitive permutation group on
$\Omega=N$. The group $H$ is the stabilizer $G_{f}$ of the function
$f:H\to T$ mapping every element of $H$ to the identity of $T$.

Write
\[
\gamma=\left[
\begin{array}{cc}
1&1\\
0&1\\
\end{array}
\right]
\]
(regarded as an element of $T$). Let $C=(\langle \gamma\rangle\times
\langle\gamma\rangle)\rtimes\langle\iota\rangle$ as a subgroup of $H$
and define $g:H\to T$ by 
\[
g(z)=\left\{
\begin{array}{ccl}
\gamma^{\varphi(l)}&&\textrm{if }z=cl,\textrm{ for some }c\in C
\textrm{ and } l\in L,\\ 
1 &&\textrm{if }z\in H\setminus CL.
\end{array}
\right.
\]
We claim that the function $g$ is well-defined. In fact, if $z=c_1l_1=c_2l_2$ for
$c_1,c_2\in C$ and $l_1,l_2\in L$, then $l_2l_1^{-1}\in C\cap
L=\langle(\gamma,\gamma)\rangle\langle\iota\rangle$. 
Hence $l_2=ul_1$ with $u=(\gamma^k,\gamma^k)\iota^i$ for some
$k\in \{0,\ldots,6\}$ and $i\in \{0,1\}$. In particular, since
$\gamma^k$ centralizes $\gamma$, we obtain
$\gamma^{\varphi(l_1)}=\gamma^{\varphi(u)\varphi(l_1)}
=\gamma^{\varphi(ul_1)}=\gamma^{\varphi(l_2)}$ and hence the image
$\gamma^{\varphi(l)}$ is independent of the representation $z=c_il_i$
of $z$. 

Fix $z$ in $H$. Distinguishing
the cases $z\in CL$ and $z\notin CL$, it is easy to verify that
$g(zl)=g(z)^{\varphi(l)}$ for each $l\in L$ and $z\in H$, and hence $g\in \Omega$. For each $c\in C$ and
$z\in H$, we
have $g^c(z)=g(cz)=g(z)$, and hence $C\subseteq \cent H
g$. We claim that $C=\cent H g$. Let $h=(h_1,h_2)\iota^j $ be in $\cent
H g$. Suppose that  $h\notin CL$. As $g(1)=\gamma\neq
1$ and $g^h(1)=g(h)=1$, we obtain $g\neq g^h$. Thus $h\in CL$ and
$\cent H g\subseteq CL$. As $C\subseteq \cent H g$, replacing $h$ by
$ch$ for a suitable element $c\in C$, we may assume that
$h\in L$, that is, $h=(x,x)\iota^i$ for some $x\in
T$ and $i\in \{0,1\}$. Now
$\gamma=g(1)=g^h(1)=g(h)=\gamma^{\varphi(x)}$. Hence $x\in
\cent T \gamma=\langle\gamma\rangle$, $h\in C\cap L$ and our claim is proved. 
Thus the
$H$-orbit containing $g$ has size $|H:C|=24^2=576$, and $576$ is a
subdegree of $G$.

Write
\[
a=\left[
\begin{array}{cc}
0&4\\
5&4\\
\end{array}
\right]\quad\textrm{ and }\quad
b=\left[
\begin{array}{cc}
2&1\\
0&4\\
\end{array}
\right]
\] (again thought of as elements of $T$). The element $a$ has order
$4$, the element $b$ has order $3$, and 
$\langle a ,b\rangle\cong \Sym(4)$.  
Let $D=(\langle a,b\rangle\times \langle
a,b\rangle)\rtimes\langle\iota\rangle$ and let $t=(\gamma,1)$. A direct
computation shows 
that $D^t\cap L$ is a dihedral group of size $8$, namely
\[
\langle (a^2,a^2),(r,r)\iota\rangle\quad\textrm{ with }
r=\left[
\begin{array}{cc}
3&5\\
4&0\\
\end{array}
\right]
\]
and with centre
\[
\langle (\eta,\eta)\rangle 
\quad\textrm{ where } \eta=r^2=
\left[
\begin{array}{cc}
1&1\\
5&6\\
\end{array}
\right].
\] 
Define
\[
h(z)=\left\{
\begin{array}{ccl}
\eta^{\varphi(l)}&&\textrm{if }z=dtl,\textrm{ for some }d\in D
\textrm{ and } l\in L,\\
1 &&\textrm{if }z\notin DtL.
\end{array}
\right.
\]
We claim that the function $h$ is well-defined. In fact, if
$z=d_1tl_1=d_2tl_2$ for 
$d_1,d_2\in D$ and $l_1,l_2\in L$, then
$l_2l_1^{-1}=t^{-1}d_2^{-1}d_1t\in D^t\cap L$. 
Hence $l_2=ul_1$ with $u\in D^t\cap L$. In particular, since
$u$ centralizes $(\eta,\eta)$, we obtain
$\eta^{\varphi(l_1)}=\eta^{\varphi(u)\varphi(l_1)}=
\eta^{\varphi(ul_1)}=\eta^{\varphi(l_2)}$ and so the image
$\eta^{\varphi(l)}$ is independent of the representation $z=d_itl_i$ of $z$.  

Fix $z$ in $H$. As above, by distinguishing the cases $z\in DtL$ and
$z\notin DtL$, it is easy to verify that $h(zl)=h(z)^{\varphi(l)}$ for each
 $l\in L$, and hence $h\in \Omega$. From the definition of $h$, we see
 that for each $d\in D$ and
$z\in H$, we
have $h^d(z)=h(dz)=h(z)$. Hence $D\subseteq \cent H
h$. Since $D$ is a maximal subgroup of $H$, we obtain $D=\cent H h$. Thus the
$H$-orbit containing $h$ has size $|H:D|=7^2=49$, and $49$ is a
subdegree of $G$. 

This shows that $G$ has two coprime subdegrees, namely $576$ and $49$. Now, using the computational algebra system \texttt{GAP}~\cite{GAP4}, it is easy to show that $\cent H {fg}=\cent H f\cap \cent H g$. In particular, the suborbit of $G$ containing $fg$ has size $576\cdot 49$.} 
\end{example}

\begin{proof}[Proof of Theorem~\ref{thm3}]From
Examples~\ref{exa2},~\ref{exa3} and~\ref{exa4} and Theorem~\ref{thm2},
it suffices to show that if 
$G$ is a primitive group of type HS, HC, SD or CD, then $G$ has no non-trivial
coprime
subdegrees.  We argue by contradiction and we assume that $G$ is a
primitive group on $\Omega$ of HS, HC, SD or CD type with two
non-trivial coprime
subdegrees. 

We first consider the case that $G$ is of HS or HC type. Let
$N=T_1\times \cdots\times T_\ell$ be the socle of $G$, with $T_i\cong
T$ for some  non-abelian simple
group $T$, for each $i\in \{1,\ldots,\ell\}$. From the description of the 
O'Nan-Scott types in~\cite{Pr}, we see that $\ell$ is
even (with $\ell=2$ if $G$ is of HS type and with $\ell>2$ if $G$ is
of HC type). Furthermore, 
relabelling the indices $\{1,\ldots,\ell\}$ if necessary,
$M_1=T_1\times \cdots \times T_{\ell/2}\cong T^{\ell/2}$ and
$M_2=T_{\ell/2+1}\times\cdots \times T_{\ell}=T^{\ell/2}$ are minimal normal
regular subgroups of $G$, and $\Omega$ can be identified with
$T^{\ell/2}$. Namely, the 
action of $N$ on $\Omega$ is permutation isomorphic to the action of
$T^{\ell/2}\times T^{\ell/2}$ on $T^{\ell/2}$ given
by $$z^{(a,b)}=a^{-1}zb=(a_1^{-1}z_1b_1,\ldots,a_{\ell/2}^{-1}z_{\ell/2}b_{\ell/2}),$$
for
$a=(a_1,\ldots,a_{\ell/2}),b=(b_1,\ldots,b_{\ell/2}),z=(z_1,\ldots,z_{\ell/2})\in
T^{\ell/2}$. Under this identification, the stabilizer in
$T^{\ell/2}\times T^{\ell/2}$ of the
element $(1,\ldots,1)$ of 
$T^{\ell/2}$ is the set $\{(a,a)\mid a\in T^{\ell/2}\}$ acting on $T^{\ell/2}$ by
conjugation, that is, $z^{(a,a)}=a^{-1}za$.  

Let
$\omega_1,\omega_2$ be elements of 
$\Omega\setminus\{\omega\} $ with $m=|\omega_1^{G_{\omega}}|$ coprime to
$n=|\omega_2^{G_\omega}|$.  Since $N$ is
normal in $G$, we have that $m'=|\omega_1^{N_\omega}|=|N_\omega:N_{\omega,\omega_1}|$
divides $m$ and $n'=|\omega_2^{N_\omega}|=|N_\omega:{N_{\omega,\omega_2}}|$ divides
$n$. In particular, $m'$ and $n'$ are coprime. Identifying $\Omega$
with $T^{\ell/2}$ (as above), $\omega$ with $(1,\ldots,1)$, $N_\omega$
with $T^{\ell/2}$ (as above), $\omega_1$ with
$(x_1,\ldots,x_{\ell/2})$ and $\omega_2$ with
$(y_1,\ldots,y_{\ell/2})$, we get
\begin{eqnarray*}
N_{\omega,\omega_1}&\cong &\cent
{T^{\ell/2}}{(x_1,\ldots,x_{\ell/2})}=\cent T {x_1}\times\cdots\times
\cent T {x_{\ell/2}}\\
N_{\omega,\omega_2}&\cong &\cent
{T^{\ell/2}}{(y_1,\ldots,y_{\ell/2})}=\cent T {y_1}\times\cdots\times
\cent T {y_{\ell/2}}.\\
\end{eqnarray*}
In particular, $m'=\prod_{i=1}^{\ell/2}|T:\cent T {x_i}|$ and
$n'=\prod_{i=1}^{\ell/2}|T:\cent T {y_i}|$. As $m'$ is coprime to
$n'$, for each $i,j\in \{1,\ldots,\ell/2\}$, the indices $|T:\cent
T{x_i}|$ and $|T:\cent T{y_j}|$ are coprime and hence $T=\cent T
{x_i}\cent T {y_j}$.   As $\omega_1,\omega_2\neq 1$, 
there exist $i,j\in \{1,\ldots,\ell/2\}$ with $x_i\neq 1$ and
$y_j\neq 1$. So $T=\cent T {x_i}\cent T {y_j}$ is a coprime
factorization and
Theorem~\ref{szep} yields  that $T$ is not a non-abelian simple group, a
contradiction.

It remains to consider the case that G is of SD or CD type. From the
description of the O'Nan-Scott types in~\cite{Pr}, we  may write the
socle $N$ of $G$ as $$
N=
(T_{1,1}\times \cdots\times T_{1,r}) \times
(T_{2,1}\times \cdots \times T_{2,r})\times\cdots \times
(T_{s,1}\times \cdots\times  T_{s,r}),$$
 with
$T_{i,j}\cong T$ for some non-abelian simple group $T$, for each $i\in
\{1,\ldots,r\}$ and $j\in \{1,\ldots,s\}$: where $r\geq 2$ and $s\geq 1$
(with $s=1$ if $G$ is of SD type and with $s\geq 2$ if $G$ is of CD
type).  The set $\Omega$
can be identified with $
(T_{1,1}\times\cdots \times T_{1,r-1})\times
(T_{2,1}\times\cdots \times T_{2,r-1})\times\cdots \times
(T_{s,1}\times\cdots\times T_{s,r-1})\cong T^{s(r-1)}$ and, for the point
$\omega\in \Omega$ identified with $(1,\ldots,1)$, the stabilizer 
$N_\omega$ is $D_1\times \cdots \times D_s\cong T^s$ where $D_i$ is the
diagonal subgroup $\{(t,\ldots,t)\mid t \in T\}$ of $T_{i,1}\times
\cdots\times T_{i,r}$. That is to say, the action
of $N_\omega$ on $\Omega$ is permutation isomorphic to the action of
$T^s$ on $T^{s(r-1)}$ by ``diagonal'' componentwise conjugation: the
image of the point
$(x_{1,1},\ldots,x_{1,r-1},\ldots,x_{s,1},\ldots,x_{s,r-1})$  under the
permutation $(t_{1},\ldots,t_{s})$  is

$$(x_{1,1}^{t_{1}},\ldots,x_{1,r-1}^{t_1},x_{2,1}^{t_2},\ldots,x_{2,r-1}^{t_2},\ldots,
x_{s,1}^{t_{s}},\ldots,x_{s,r-1}^{t_s}).$$
Let $\omega_1,\omega_2$ be elements of $\Omega\setminus\{\omega\}$ with
$m=|\omega_1^{G_\omega}|$ coprime to $n=|\omega_2^{G_\omega}|$. Since $N$
is normal in $G$, we have that
$m'=|\omega_1^{N_\omega}|=|N_\omega:N_{\omega,\omega_1}|$ divides $m$ and
$n'=|\omega_2^{N_\omega}|=|N_\omega:N_{\omega,\omega_2}|$ divides $n$. In
particular, $m'$  and $n'$ are coprime. Identifying  $\omega_1$ with
$(x_{1,1},\ldots,x_{1,r-1},\ldots,x_{s,1},\ldots,x_{s,r-1})$
 and $\omega_2$ with
$(y_{1,1},\ldots,y_{1,r-1},\ldots,y_{s,1},\ldots,y_{s,r-1})$, we get
\begin{eqnarray*}
N_{\omega,\omega_1}&\cong&\cent {T}{\langle x_{1,1},\ldots,x_{1,r-1}\rangle}
\times \cdots \times
\cent {T}{\langle x_{s,1},\ldots,x_{s,r-1}\rangle}\\
N_{\omega,\omega_2}&\cong&\cent {T}{\langle
y_{1,1},\ldots,y_{1,r-1}\rangle}
\times \cdots \times
\cent {T}{\langle y_{s,1},\ldots,y_{s,r-1}\rangle}.\\
\end{eqnarray*}
In particular, $m'=\prod_{i=1}^s|T:\cent {T}{\langle
 x_{i,1},\ldots,x_{i,r-1}\rangle}|$
and
$n'=\prod_{i=1}^s|T:\cent {T}{\langle
y_{i,1},\ldots,y_{i,r-1}\rangle}|$.

As $\omega_1,\omega_2\neq \omega$, there exist $i_1,i_2\in \{1,\ldots,r-1\}$
and $j_1,j_2\in \{1,\ldots,s\}$ with $x_{i_1,j_1}\neq 1$ and
$y_{i_2,j_2}\neq 1$. Now, since  $\cent {T}{\langle
x_{i_1,1},\ldots,x_{i_1,r-1}\rangle}\subseteq \cent
{T}{x_{i_1,j_1}}\subsetneq T$ and   $\cent {T}{
\langle x_{i_2,1},\ldots,x_{i_2,r-1}\rangle}\subseteq \cent
{T}{x_{i_2,j_2}}\subsetneq T$ and since $m'$ is relatively prime to $n'$, we
obtain
$T=\cent {T}{x_{i_1,j_1}}\cent {T}{x_{i_2,j_2}}$. From Theorem~\ref{szep},
$T$ is a not non-abelian simple group, a contradiction.

\end{proof}

\section{Proofs of Theorems~\ref{TW} and~\ref{thm4}}\label{sec4}

Before proving Theorem~\ref{TW} and~\ref{thm4} we need the following definition and lemmas.
\begin{definition}{\rm If $G$ is a finite group, we let $\mu(G)$ denote the maximal size of a set $\{G_i\}_{i\in I}$ of proper subgroups of $G$ with $|G:G_i|$ and $|G:G_j|$ relatively prime, for each two distinct elements $i$ and $j$ of $I$.}
\end{definition}

\begin{lemma}\label{id1}
If $K$ is a direct product of isomorphic nonabelian simple groups, then $\mu(K)\leq 2$.
\end{lemma}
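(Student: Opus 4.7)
I will argue by contradiction. Write $K=T^n$ with $T$ a nonabelian simple group and $n\ge 1$, and suppose $K$ contains three proper subgroups $K_1,K_2,K_3$ whose indices are pairwise coprime (and hence each at least $2$). The plan is to reduce to the case of maximal subgroups, then to the case $n=1$, and then to invoke the classification of coprime factorizations of nonabelian simple groups.

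First, I would reduce to the case where each $K_i$ is maximal in $K$: any maximal overgroup $M_i\supseteq K_i$ satisfies $|K:M_i|\mid |K:K_i|$, so the new indices remain pairwise coprime and each is still $\ge 2$. By a standard theorem (essentially Scott's description of the maximal subgroups of a direct power of a nonabelian simple group), for $n\ge 2$ every maximal subgroup of $K$ is either (a) of product type $T^{n-1}\times M$, with $M$ a maximal subgroup of $T$ placed in some one of the $n$ coordinates, giving index $|T:M|$ in $K$; or (b) a 2-strip diagonal subgroup $\{(x_1,\ldots,x_n):x_j=\phi(x_i)\}$ for some pair $\{i,j\}$ and some $\phi\in\Aut(T)$, giving index $|T|$ in $K$. (For $n=1$ only case (a) exists.)

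If some $K_i$ were of type (b), then every prime dividing $|T|$ would divide $|K:K_i|$; but for $j\ne i$ the index $|K:K_j|$ divides $|K|=|T|^n$ and so is divisible only by primes dividing $|T|$, whence coprimality with $|T|$ would force $|K:K_j|=1$, contradicting properness. So each $K_i$ is of product type, yielding maximal subgroups $M_1,M_2,M_3$ of $T$ with $|T:M_i|=|K:K_i|$ pairwise coprime and each $\ge 2$; in particular $\mu(T)\ge 3$. The lemma is therefore reduced to the base case $n=1$.

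The main obstacle is this base case: to show $\mu(T)\le 2$ for every nonabelian simple group $T$. By coprimality each pair $(M_i,M_j)$ furnishes a coprime maximal factorization $T=M_iM_j$, and Theorem~\ref{szep} already rules out the possibility of two of the $M_i$ simultaneously having nontrivial centres. The full conclusion is a consequence of the classification of finite simple groups: Table~\ref{table} (following the work of Liebeck, Praeger, and Saxl) enumerates all coprime maximal factorizations of nonabelian simple groups, and a direct inspection of that table shows that no such group admits three maximal subgroups with pairwise coprime indices, completing the proof.
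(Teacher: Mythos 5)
Your proof is correct and follows essentially the same route as the paper's: reduce to maximal subgroups, use the product-type/diagonal dichotomy for maximal subgroups of $T^n$ to exclude the diagonal (strip) case and obtain three maximal subgroups of $T$ with pairwise coprime indices, then conclude by inspection of the classified coprime maximal factorizations in Table~\ref{table}. (Your treatment is in fact slightly more careful than the paper's on the point that the three product-type subgroups need not sit in the same coordinate.)
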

\begin{proof}
We have $K=T_1\times \cdots \times T_\ell$ with $T_i\cong T$, for some
nonabelian simple group $T$ and  for some $\ell\geq 1$. We argue by
contradiction and we assume that $\mu(K)\geq 3$, that is, $K$ has
three proper subgroups $A_1$, $A_2$ and $A_3$ with $|K:A_1|$,
$|K:A_2|$ and $|K:A_3|$ relatively prime. Write $a_i=|K:A_i|$ for
$i\in \{1,2,3\}$. Replacing $A_i$ with a maximal subgroup of $K$
containing $A_i$ if necessary, we may assume that $A_i$ is maximal in
$K$, for $i\in \{1,2,3\}$. In particular, (up to relabelling the
simple direct summands of $K$) we have that  either $A_1=M_1\times
T_2\times\cdots\times T_\ell$ (for some maximal subgroup $M_1$ of
$T_1$) or  $A_1=\{(t_1,t_1^\eta)\mid t_1\in T_1\}\times T_3\times
\cdots \times T_\ell$  (for some isomorphism $\eta:T_1\to T_2$). In
the latter case we have that $a_1=|T|$ is not relatively prime to
$a_2$ and to $a_3$, a contradiction. Therefore, up to relabeling the
indices, we may assume that  $A_i=M_i\times T_2\times \cdots\times
T_\ell$ with $M_i$ a maximal subgroup of $T_1$, for $i\in
\{1,2,3\}$. This gives that the nonabelian simple group $T_1$ admits
three coprime factorizations $T_1=M_1M_2=M_1M_3=M_2M_3$ with
$|T_1:M_1|,|T_1:M_2|$ and $|T_1:M_3|$ relatively prime. A simple
inspection in Table~\ref{table} shows that this is impossible. The
same conclusion can be obtained using~\cite{BadPr}, where the authors
determine all multiple factorizations of finite nonabelian simple
groups $T=M_{i}M_j$, for $i$ and $j$ distinct elements of
$\{1,2,3\}$. In particular, it is readily checked that in none of the
multiple factorizations in~\cite{BadPr} are the indices $|T:M_1|$,
$|T:M_2|$ and $|T:M_3|$ pairwise coprime. 
\end{proof}

\begin{lemma}\label{id2}
Let $G$ be a transitive permutation group on $\Omega$ and let $\omega$ be in $\Omega$. Suppose
that $N$ is normal in $G_\omega$ and $N$ fixes a unique point on $\Omega$. Then the number of coprime subdegrees of $G$ is at most $\mu(N)$.
\end{lemma}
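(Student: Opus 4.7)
The plan is to turn pairwise coprime non-trivial subdegrees of $G$ into pairwise coprime indices of proper subgroups of $N$, so that the bound $\mu(N)$ can be read off directly from its definition.

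First I would start with a set of pairwise coprime non-trivial subdegrees $d_1,\ldots,d_k$ and pick representatives $\omega_i\in\Omega\setminus\{\omega\}$ so that $d_i=|\omega_i^{G_\omega}|=|G_\omega:G_{\omega,\omega_i}|$. The key observation is then that, since $N\trianglelefteq G_\omega$ and $G_\omega$ acts transitively on $\omega_i^{G_\omega}$, all $N$-orbits on $\omega_i^{G_\omega}$ have the same length $|\omega_i^N|=|N:N\cap G_{\omega_i}|$, so that $|\omega_i^N|$ divides $d_i$. This is the only step where the hypothesis ``$N$ is normal in $G_\omega$'' is used, and it is also the mildest technical point in the proof.

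Next I would exploit the hypothesis that $N$ fixes only the point $\omega$. Since $\omega_i\neq\omega$, the subgroup $N_{\omega_i}=N\cap G_{\omega_i}$ is a proper subgroup of $N$, so $|\omega_i^N|>1$. Because the $d_i$ are pairwise coprime and each $|\omega_i^N|$ divides the corresponding $d_i$, the indices $|N:N_{\omega_i}|$ are themselves pairwise coprime and all strictly greater than $1$. In particular, the subgroups $N_{\omega_1},\ldots,N_{\omega_k}$ are pairwise distinct proper subgroups of $N$ whose indices in $N$ are pairwise coprime.

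Finally, I would conclude by invoking the definition of $\mu(N)$: the set $\{N_{\omega_i}\}_{i=1}^k$ witnesses $k\leq\mu(N)$, establishing the claim. The whole argument is essentially bookkeeping once the normality step is in place, so no serious obstacle is expected; the one point worth being careful about is checking that the $N_{\omega_i}$ are genuinely distinct, which follows automatically because coprime indices greater than $1$ cannot coincide.
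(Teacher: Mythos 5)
Your proposal is correct and follows essentially the same route as the paper: use normality of $N$ in $G_\omega$ to see that $|\omega_i^N|$ divides the subdegree $d_i$, use the unique-fixed-point hypothesis to get $|\omega_i^N|>1$, and then read off the bound from the definition of $\mu(N)$. The only cosmetic difference is that you additionally remark on distinctness of the $N_{\omega_i}$, which the paper leaves implicit.
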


\begin{proof}
Let $O_1,\ldots,O_r$ be orbits of $G_\omega$ on $\Omega\setminus\{\omega\}$ of pairwise coprime sizes and let $\omega_i\in O_i$, for $i\in \{1,\ldots,r\}$. Now the orbits of $N$ on $O_i$ have all the same size, $m_i$ say, and $m_i$ divides $|O_i|$. Since $N$ fixes only the point $\omega$ of $\Omega$, we have that $m_i>1$. Therefore $\{N_{\omega_i}\}_{i\in\{1,\ldots,r\}}$ is a set of proper subgroups of $N$ with pairwise coprime indices. Thus $r\leq \mu(N)$.
\end{proof}

\smallskip

\begin{proof}[Proof of Theorem~\ref{TW}]Let $G$ be a primitive group
  of TW type. We use the notation introduced in the first paragraph of
  Example~\ref{exa4}: so $G=N\rtimes H$ is the twisted wreath product
  determined by $H$ and $\varphi:L\to\Aut(T)$. Recall that $G$ acts
  primitively on $N$, with $N$ acting on itself by right
  multiplication and with $H$ acting on $N$ by conjugation. In
  particular, $H$ is the stabilizer of the point $1\in N$.  Let $K$ be
  a minimal normal subgroup of
  $H$. From~\cite[Theorem~4.7B~$(i)$]{DM}, $K$ is a direct product of
  nonabelian simple groups. Write $\ell=|H:L|$. Hence
  $N=T_1\times\cdots\times T_\ell$ with $T_i\cong T$, for each $i\in
  \{1,\ldots,\ell\}$. Furthermore, $L=N_H(T_i)$ for some $i\in
  \{1,\ldots,\ell\}$. Relabeling the $T_j$ if necessary, we may assume
  that $i=1$. From~\cite[Theorem~4.7B~$(ii)$]{DM}, the group $L$ is a
  core-free subgroup of $H$ and hence $K\nsubseteq L$.    

We claim that $K\cap L$ acting by conjugation on the simple group
$T_1$ induces all the inner automorphisms. If not, then $K\cap L\subseteq
\cent H {T_1}$ because $K\cap L$ is a normal subgroup of $L$ and $T_1$
is nonabelian simple. Thus the homomorphism $\varphi:L\to \Aut(T)$ can
be extended to a homomorphism $\hat{\varphi}:KL\to \Aut(T)$ of the
group $KL$ by setting $\hat{\varphi}(kl)=\varphi(l)$, for each $l\in
L$ and $k\in K$. As $L\subsetneq KL$, this contradicts the maximality condition
of $H$ in $G$ given in~\cite[Lemma~$3.1$~$(b)$]{Bad}, and the claim is
proved. In particular, since $K$ is a normal subgroup of $H$ and since
$H$ acts transitively on the $\ell$ simple direct summands
$\{T_1,\ldots,T_\ell\}$, we obtain that $K\cap N_H(T_i)$ induces by
conjugation all the inner automorphisms of $T_i$, for each $i\in
\{1,\ldots,\ell\}$. This gives $\cent N{K}=1$ and so $K$ fixes a
unique point of $N$. Now the proof follows from Lemmas~\ref{id1}
and~\ref{id2}.
\end{proof}

Before concluding this section we show that coprime
subdegrees in primitive groups $G$ of AS or PA type determine coprime
subdegrees  
 in transitive non-abelian simple groups $T$, and we give a strong
relation between $G$ and $T$. Let $G$ be a primitive group
of AS or PA type. We recall that from the
description of the O'Nan-Scott types in~\cite{Pr} the group $G$ is a
subgroup of the wreath product $H\wr \Sym(\ell)$ endowed with
its natural product action on $\Delta^\ell$, with $H$ an almost simple primitive
group on $\Delta$ (we have $\ell=1$ and $G=H$ if $G$ is of AS type,
and $\ell>1$ if $G$ is of PA type). Furthermore, if $T$ is the socle
of $H$, 
then $N=T_1\times \cdots\times T_\ell$ is the socle of $G$, where
$T_i\cong T$ for each $i\in \{1,\ldots,\ell\}$. Write $G_i=N_G(T_i)$
and denote by $\pi_i:G_i\to H$ the natural projection. From~\cite{Pr}, we
see that replacing $H$ by $\pi_i(G_i)$ if necessary, we may assume
that $\pi_i(G_i)$ is surjective. In this case, we say that $H$ is the
{\em component subgroup} of $G$. In particular, if $G$ is of AS type, the component subgroup of $G$ is $G$ itself.

(We recall that the definition of pseudo-maximal subgroup is in Definition~\ref{def}.)
\begin{proposition}\label{propo}Let $G$ be a primitive permutation
  group of AS or PA type acting on $\Delta^\ell$ with component subgroup $H\subseteq  \Sym(\Delta)$ and let $T$ be the socle of $H$. For
  $\delta\in\Delta$, the stabilizer $T_\delta$ is a pseudo-maximal
  subgroup of $T$. Furthermore, if $G$ has $k$ non-trivial
 coprime subdegrees, then $T$ acting on $\Delta$ has 
at least $k$  non-trivial coprime  subdegrees.
\end{proposition}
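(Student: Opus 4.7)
The plan is to handle the two assertions separately: the first is a short structural observation about the product action, while the second reduces to showing that $N_\alpha$ cannot fix any non-trivial orbit representative.

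For pseudo-maximality, I would use the standard fact that if $G$ is primitive on $\Delta^\ell$ via the product action, then the component subgroup $H$ is primitive on $\Delta$, so $H_\delta$ is a maximal subgroup of the almost simple group $H$. Since $T$ is a non-trivial normal subgroup of the primitive group $H$, it is transitive on $\Delta$, and hence $T\not\subseteq H_\delta$. Thus Definition~\ref{def} is satisfied with witness pair $(H,H_\delta)$, and $T_\delta=T\cap H_\delta$ is pseudo-maximal.

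For the subdegree statement, write the socle of $G$ as $N=T_1\times\cdots\times T_\ell$ and, setting $\alpha=(\delta,\ldots,\delta)$, identify $N_\alpha$ with $T_\delta^\ell$. Given pairwise coprime non-trivial subdegrees $d_1,\ldots,d_k$ of $G$, pick $\omega^{(i)}=(\omega^{(i)}_1,\ldots,\omega^{(i)}_\ell)$ in the corresponding $G_\alpha$-orbits. Normality of $N_\alpha$ in $G_\alpha$ yields that $m_i:=|\omega^{(i)N_\alpha}|$ divides $d_i$, so $m_1,\ldots,m_k$ are pairwise coprime. The componentwise action of $N_\alpha$ gives $m_i=\prod_{j=1}^\ell s^{(i)}_j$, where $s^{(i)}_j=|\omega^{(i)}_j\cdot T_\delta|$ is a $T$-subdegree on $\Delta$. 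Provided every $m_i>1$, picking one factor $s^{(i)}_{j_i}>1$ from each $m_i$ produces $k$ pairwise coprime non-trivial $T$-subdegrees, completing the proof.

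The main obstacle is excluding $m_i=1$. Suppose $m_i=1$; then $T_\delta$ fixes each $\omega^{(i)}_j$ in $\Delta$, and transitivity of $T$ upgrades this to $T_\delta=T_{\omega^{(i)}_j}$. The crucial step is then to deduce that $H_\delta$ also fixes $\omega^{(i)}_j$: for $h\in H_\delta$, transitivity of $T$ supplies $t_0\in T$ with $t_0\omega^{(i)}_j=h\omega^{(i)}_j$, and $t_0^{-1}h\in T_{\omega^{(i)}_j}=T_\delta\subseteq T$ forces $h\in T\cap H_\delta=T_\delta$, which fixes $\omega^{(i)}_j$. To close, I would verify $|\mathrm{Fix}(H_\delta)|=1$ by the usual argument: $H_\delta$ is maximal in $H$, and it cannot be normal, for otherwise $T_\delta=T\cap H_\delta$ would be normal in the simple group $T$, hence either equal to $T$ (contradicting the transitivity of $T$) or trivial (impossible, since then the simple nonabelian group $T$ would act regularly and primitively on $\Delta$, forcing $T$ to be cyclic of prime order). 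Thus $N_H(H_\delta)=H_\delta$ and $\mathrm{Fix}(H_\delta)=\{\delta\}$, so every $\omega^{(i)}_j=\delta$, giving $\omega^{(i)}=\alpha$ and contradicting $d_i>1$.
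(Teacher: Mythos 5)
Your overall strategy is the same as the paper's: identify $N_\alpha$ with $T_\delta^{\,\ell}$, note that the $N_\alpha$-orbit sizes $m_i$ divide the $d_i$ and factor as products of $T_\delta$-orbit sizes on $\Delta$, and then reduce everything to showing that $T_\delta$ fixes no point of $\Delta$ other than $\delta$. The pseudo-maximality part and the coprimality bookkeeping are correct. The gap is precisely in the one step that needs a real argument, namely the exclusion of $m_i=1$.

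Your deduction that $H_\delta$ fixes $\omega^{(i)}_j$ is circular. Having chosen $t_0\in T$ with $t_0\omega^{(i)}_j=h\omega^{(i)}_j$, you assert $t_0^{-1}h\in T_{\omega^{(i)}_j}$. But $T_{\omega^{(i)}_j}=T\cap H_{\omega^{(i)}_j}$: the element $t_0^{-1}h$ does fix $\omega^{(i)}_j$, yet it lies in $T$ if and only if $h\in T$ --- which is exactly the conclusion you are trying to reach. Worse, the statement you would obtain, $H_\delta\subseteq T\cap H_\delta=T_\delta$, is false whenever $H\neq T$ (transitivity of $T$ gives $H=TH_\delta$, so $H_\delta\subseteq T$ would force $H=T$; take $H=\Sym(n)$, $T=\Alt(n)$). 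The correct route, which is the one the paper uses, never passes through fixed points of $H_\delta$: from $T_\delta=T_{\omega^{(i)}_j}$ choose $h\in H$ with $\delta^h=\omega^{(i)}_j$; normality of $T$ in $H$ gives $(T_\delta)^h=T_{\delta^h}=T_{\omega^{(i)}_j}=T_\delta$, so $h\in N_H(T_\delta)$; and $N_H(T_\delta)=H_\delta$ (since $H_\delta\leq N_H(T_\delta)$, $H_\delta$ is maximal and core-free in $H$, and $T_\delta\neq 1$), whence $\omega^{(i)}_j=\delta^h=\delta$ and $\omega^{(i)}=\alpha$, a contradiction. Your closing computation of $N_H(H_\delta)=H_\delta$ controls the fixed points of the wrong subgroup: what must be normalized is $T_\delta$, not $H_\delta$, and the two normalizer statements are not interchangeable here. (Both your argument and the paper's implicitly use $T_\delta\neq 1$, i.e.\ that the socle is not regular on $\Delta$; if you rewrite the step, it is worth flagging where that is used.)
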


\begin{proof}
As $H$ is a primitive group of AS type on $\Delta$, we have that
$H_\delta$ is a maximal subgroup of the almost simple group $H$ with
$T\nsubseteq H_\delta$, for each 
$\delta\in \Delta$. Therefore $T_\delta=T\cap H_\delta$  is a
pseudo-maximal subgroup of $T$ and, in particular, $N_H(T_\delta)=H_\delta$. Let
$\Lambda$ be the set of fixed points of $T_\delta$ on $\Delta$ and let
$\delta_1,\delta_2\in \Lambda$. By transitivity, there exists $h\in H$
with $\delta_1^h=\delta_2$, that is,
$T_\delta=T_{\delta_2}=T_{\delta_1^h}=T_{\delta_1}^h=T_{\delta}^h$. Therefore $h\in N_H(T_\delta)=H_\delta$ and $\Lambda$ is the
$H_\delta$-orbit containing $\delta$, that is, $\Lambda=\{\delta\}$
and $T_\delta$ fixes a unique point of $\Delta$. 

Let $\delta\in \Delta$ and write 
 $\alpha=(\delta,\ldots,\delta)\in \Delta^\ell$. Let
$N=T_1\times \cdots \times T_\ell$ be the socle of $G$. Clearly,
$G_\alpha\subseteq H_\delta\wr\Sym(\ell)$ and, as $G_\alpha$ is a
maximal subgroup of $G$ and as $N\subseteq G$, we obtain
$N_\alpha=(T_{1})_\delta\times \cdots \times(T_\ell)_\delta$. 

Assume that $G$ has $k$ non-trivial coprime subdegrees $n_1,\ldots,n_k$. Now, there exist
$\beta_i=(\delta_{i,1},\ldots,\delta_{i,\ell})$ 
with $n_i=|\beta_i^{G_\alpha}|$, for $i\in \{1,\ldots,k\}$.  Since $N$ is a
normal subgroup of $G$, we obtain that $n_i'=|\beta_i^{N_\alpha}|$ divides
$n_i$, and so $n_1',\ldots,n_k'$ are pairwise coprime. Furthermore 
\begin{eqnarray*}
\beta_i^{N_\alpha}&=&(\delta_{i,1},\ldots,\delta_{i,\ell})^{(T_1)_\delta\times
  \cdots \times (T_\ell)_\delta}=\delta_{i,1}^{(T_1)_\delta}\times
\cdots\times \delta_{i,\ell}^{(T_\ell)_\delta}\\
\end{eqnarray*}
for each $i\in \{1,\ldots,k\}$, and so $n_i'=\prod_{j=1}^\ell|\delta_{i,j}^{T_\delta}|$.

As $n_i'$ is coprime with $n_j'$, for each distinct $i$ and $j$ in $\{1,\ldots,k\}$, the subdegrees $|\delta_{i,x}^{T_\delta}|$ and $|\delta_{j,y}'^{T_\delta}|$ of $T$
acting on $\Delta$ are
coprime, for each $x,y\in \{1,\ldots,\ell\}$. Since for each $i\in \{1,\ldots,k\}$ we have $\beta_i\neq \alpha$, there exists $j_i\in
\{1,\ldots,\ell\}$ with $\delta_{i,j_i}\neq \delta$. Since $T_\delta$ fixes only the element $\delta$ of $\Delta$,
we have $|\delta_{i,j_i}^{T_\delta}|>1$. Thus $|\delta_{1,j_1}^{T_\delta}|,\ldots,|\delta_{k,j_k}^{T_{\delta}}|$ are $k$ non-trivial coprime subdegrees of $T$ acting on $\Delta$. 
\end{proof}

\begin{proof}[Proof of Theorem~\ref{thm4}]Assume that
Theorem~\ref{conj3} holds true. Let $G$ be a primitive permutation group on
$\Omega$ with three non-trivial coprime subdegrees. From
Theorems~\ref{thm3} and~\ref{TW}, $G$ is of AS or PA type. Since
Theorem~\ref{conj3} holds true, Proposition~\ref{propo} yields a
contradiction.
\end{proof}

\section{Proofs of Theorems~\ref{cor:3} and~\ref{cor:4}}\label{CorProof}

As usual, we denote by $\F G$ the \emph{Fitting subgroup} of the finite group $G$, that is, the largest normal nilpotent subgroup of $G$. The proof of Corollary~\ref{cor:3} and~\ref{cor:4} will follow from Lemma~\ref{apeman} and from the results in Section~\ref{sec4}.

\begin{lemma}\label{apeman}
Let $H$ be a finite permutation group. Let $O_1,\ldots,O_t$ be $H$-orbits having pairwise coprime size, with $|O_i|>1$ and with $H$ faithful on $O_i$ for each $i\in \{1,\ldots,t\}$. Then $t \leq 2$. Moreover, if $\F H \neq 1$, then $t = 1$
\end{lemma}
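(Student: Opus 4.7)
The plan is to pass from $H$ to a minimal normal subgroup $N$, manufacture inside $N$ a family of $t$ proper subgroups of pairwise coprime index (witnesses for $\mu(N)\ge t$), and then invoke the structural bound on $\mu(N)$: Lemma~\ref{id1} when $N$ is non-abelian, and a one-line prime-power count when $N$ is abelian.

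If $H$ is trivial the lemma is vacuous, so I would choose a minimal normal subgroup $N$ of $H$. The first step is to check that $N$ acts faithfully on each $O_i$. For this, let $K_i$ be the kernel of the action of $N$ on $O_i$; then $K_i\le N$, and $K_i$ is normal in $H$ because for any $h\in H$ and $k\in K_i$, each $\alpha\in O_i$ satisfies $\alpha^{h^{-1}}\in O_i$, is fixed by $k$, and hence $\alpha^{h^{-1}kh}=\alpha$. Since $H$ is faithful on $O_i$ the non-trivial normal subgroup $N$ cannot lie in the kernel of $H$ on $O_i$, so $K_i\ne N$; by minimality of $N$ we get $K_i=1$.

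Next I would read off the required subgroups. Since $N\trianglelefteq H$ and $H$ is transitive on $O_i$, all $N$-orbits inside $O_i$ share a common length $n_i$ dividing $|O_i|$, and $n_i>1$ by faithfulness together with $N\ne 1$. Thus for any chosen $\alpha_i\in O_i$ the stabilizer $N_{\alpha_i}$ is a proper subgroup of $N$ of index $n_i>1$. The divisibility $n_i\mid |O_i|$, combined with the pairwise coprimality of the $|O_i|$, yields pairwise coprimality of the $n_i$, so $\mu(N)\ge t$.

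To finish, I would use that $N$, being a minimal normal subgroup of $H$, is characteristically simple. If $N$ is abelian it is an elementary abelian $p$-group, in which case every proper subgroup of $N$ has index a positive power of the single prime $p$, and no two such indices can be coprime; hence $\mu(N)\le 1$. If $N$ is non-abelian it is a direct product of isomorphic non-abelian simple groups, and Lemma~\ref{id1} gives $\mu(N)\le 2$. In either case $t\le 2$, proving the main assertion. For the ``moreover'' clause, if $\F H\ne 1$ then $Z(\F H)$ is a non-trivial abelian normal subgroup of $H$, so I would choose $N$ to be a minimal normal subgroup of $H$ contained in $Z(\F H)$; this forces the abelian case above, giving $\mu(N)\le 1$ and therefore $t\le 1$. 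The only slightly delicate point in this plan is verifying the $H$-normality of $K_i$; after that, everything is formal bookkeeping on top of Lemma~\ref{id1}.
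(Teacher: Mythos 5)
Your proposal is correct and follows essentially the same route as the paper: pass to a minimal normal subgroup $N$, produce $t$ proper subgroups of $N$ of pairwise coprime index (so $t\le\mu(N)$), and conclude via Lemma~\ref{id1} in the non-abelian case and the elementary abelian $p$-group observation otherwise. The only cosmetic difference is that the paper gets properness of $H_{\omega_i}\cap N$ directly from core-freeness of the point stabilizers, whereas you argue via faithfulness of $N$ on each $O_i$ and the common $N$-orbit length; these are equivalent.
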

\begin{proof}
For each $i\in \{1,\ldots,t\}$, let $\omega_i$ be an element of $O_i$ and set $H_i=H_{\omega_i}$. By hypothesis, $H_i$ is a proper core-free subgroup of $H$.
Let $N$ be a minimal normal subgroup of $H$ and set $N_i=H_i\cap N$. As $H_i$ is core-free in $H$, we have $N_i\subsetneq N$. Note that $|H_iN:H_i|=|N:(H_i\cap N)|=|N:N_i|$ and hence $|H : H_i |$ is a multiple of $|N : N_i |$. This shows that $\{N_i\}_{i=1,\ldots,t}$ is a family of proper subgroups of $N$ with $|N:N_i|$ relatively prime to $|N:N_j|$, for each two distinct elements $i$ and $j$ in $\{1,\ldots,t\}$. Hence $t\leq \mu(N)$. If $N$ is a $p$-group (for some prime $p$) then $\mu(N)=1$ and if $N$ is non-abelian then $\mu(N ) \leq 2$
by Lemma~\ref{id1}.
\end{proof}

\begin{proof}[Proof of Theorem~\ref{cor:3}]
Let $G$ be a non-regular finite transitive permutation group on $\Omega$ and let $\alpha$ be in $\Omega$. Set $H=G_\alpha$.   The result now follows from Lemma~\ref{apeman}.
\end{proof}

\begin{proof}[Proof of Theorem~\ref{cor:4}]
If $t=1$, then there is nothing to prove. So we may assume that $t\geq 2$. We claim that $K/k$ is a separable extension (and so, as $K/k$ is normal, a Galois extension). This is clear if $k$ has characteristic $0$. Suppose then that $k$ has characteristic $p>0$. Now, if for some $i\in \{1,\ldots,t\}$, the extension $k_i/k$ is separable, then $K/k$ is separable (being the normal closure of a separable extension). Therefore we may assume that $k_i/k$ is inseparable, for each $i\in \{1,\ldots,t\}$. This gives that $p$ divides $[k_i:k]$, for each $i\in \{1,\ldots,t\}$. As $t\geq 2$, we obtain a  contradiction and the claim is proved.

Let $H$ be the Galois group $\Gal(K/k)$ and set $H_i = \Gal(K/k_i )$, for $i\in \{1,\ldots,t\}$. Since the normal closure of $k_i$ is $K$, we obtain that $H_i$ is core-free in $H$. Therefore $H_1,\ldots,H_t$ is a family of core-free subgroups of $H$ of pairwise coprime index. Now apply Lemma~\ref{apeman} to obtain $t\leq 2$.
\end{proof}

\section{Coprime factorizations of non-abelian simple groups}\label{sectable}

Liebeck, Praeger and Saxl~\cite{LPS,LPS2} completely classified the maximal
factorizations of all finite almost simple groups. Tables~1--6 and
Theorem~D of \cite{LPS} determine all the triples $(G,A,B)$ 
where $G$ is a nonabelian simple group, and $A$ and $B$ are maximal
subgroups of $G$ with $G=AB$.

Now, if $A'$ and $B'$ are subgroups of $G$ with $|G:A'|$ relatively
prime to $|G:B'|$, then $G=A'B'$. In particular, $A'$ and $B'$ give
rise to a coprime factorization of $G$. 
Moreover, if $A$ (respectively
$B$) is a maximal subgroup of $G$ with $A'\subseteq A$ (respectively
$B'\subseteq B$), then $G=AB$ is a maximal coprime factorization.
Therefore, the list of all non-abelian simple groups 
admitting a \emph{coprime factorization} can be easily obtained with
some elementary 
arithmetic from~\cite{LPS}. Namely, for each triple $(G,A,B)$ in
Tables~1--6 and in Theorem~D of~\cite{LPS}, it suffices to check
whether $|G:A|$ is 
relatively prime to $|G:B|$. Table~\ref{table} in this paper gives all possible
maximal coprime factorizations $(G,A,B)$ and, in particular, the list
of all non-abelian simple groups admitting a coprime
factorization. The notation we use is standard and follows the 
notation in~\cite[Section~1.2]{LPS}.

\begin{table}[!h]
\begin{tabular}{|c|c|c|l|}\hline
$G$ &$A$ & $B$&Remark\\\hline
$\Alt(p^{\ell})$&$\Alt(p^\ell-1)$&max. imprimitive&$p$ prime, $\ell\geq 2$, $i\in
  \{1,\ldots,\ell-1\}$\\
$\Alt(p)$&$\Alt(p-1)$&max. primitive&$p$ prime\\
$\Alt(8)$&$\Alt(8)_x$&\textrm{AGL}(3,2)&$x\subseteq \{1,\ldots,8\},
  |x|\in \{1,2,3\}$\\
\hline  
$M_{11}$&$L_2(11)$&$M_{10}$&\\
$M_{11}$&$L_2(11)$&$M_{9}.2$&\\
$M_{23}$&$23:11$  &$M_{22}$&\\ 
$M_{23}$&$23:11$  &$M_{21}.2$&\\ 
$M_{23}$&$23:11$  &$2^4.\Alt(7)$&\\ 
$M_{24}$  &$M_{23}$&$2^6.3.\Sym(6)$&\\\hline
$L_4(q)$&$\mathrm{PSp}(4,q)$ &$P_i$&$i\in\{1,3\}$, $q$ odd, $q\not\equiv 1\mod 8$\\
$L_n(q)$&$\mathrm{PSp}(n,q)$ &$P_i$&$i\in\{1,n-1\}$, $n=2^r$, $r\geq 2$, $q$ even\\
$L_n(q)$&   ${^{\widehat{}}}\mathrm{GL}_{b^{r-1}}(q^b).b$            & $P_i$                  &$i\in\{1, n-1\}$, $n=b^r$, $r\geq 1$, $b$ prime, \\
 &                    &    &               $r=1$ if $b=2$ and $q\equiv 3\mod 4$,\\
&                   &                   &$b>2$ if $q\equiv 1\mod 4$ \\
$L_2(q)$ &$P_1$&$\Sym(4)$&$q\in \{7,23\}$\\
$L_2(q)$&$P_1$    &$\Alt(5)$&$q\in\{11,19,29,59\}$\\
$L_5(2)$&$P_i$     &$31:5$&$i\in\{2,3\}$\\\hline
$U_{2^r}(2^k)$&$N_1$&$P_{2^{r-1}}$&$r\geq 2$, $k\geq 1$\\
$U_4(2)$           & $3^3.\Sym(4)$&$P_2$&\\\hline
$\mathrm{PSp}_{2m}(q)$& $O_{2m}^{-}(q)$&$P_m$&$m$ \textrm{ odd and} $q$ \textrm{even}\\
$\mathrm{PSp}_4(3)$& $2^4.\mathrm{Alt}(5)$&$P_i$&$i\in \{1,2\}$\\\hline
$\Omega_{2m+1}(q)$&$N_1^-$&$P_m$&$q$ odd, $m\geq 3$ odd\\
$\Omega_{2m}^+(q)$&$N_1$&$P_i$&$i\in \{m-1,m\}$, $m\geq 5$ odd,\\
&&&  $q$ even or $q\equiv 3\mod 4$\\ 
$\Omega_7(3)$&$\mathrm{Sp}_6(2)$&$P_3$&\\
$\Omega_7(3)$            &$2^6.\Alt(7)$  &$P_3$&\\
$\mathrm{P}\Omega_8^+(3)$&$\Omega_8^+(2)$&$P_i$&$i\in\{1,3,4\}$\\\hline
\end{tabular}
\caption{Maximal coprime factorizations of a finite non-abelian simple
group $G$}\label{table}
\end{table}

\thebibliography{12}
\bibitem{Bad}R.~W.~Baddeley, Primitive permutation groups with a regular non-abelian normal subgroup, \textit{Proc. London Math. Soc (3)} \textbf{67} (1993), 547--595.

\bibitem{BadPr}R.~W.~Baddeley, C.~E.~Praeger,
On classifying all full factorisations and multiple-factorisations of the finite almost simple groups, \textit{J. Algebra} \textbf{204} (1998), 129--187.

\bibitem{Brown}K.~S.~Brown, {\em Cohomology of Groups}, Graduate texts in mathematics~\textbf{87}, Springer-Verlag, (1982).

\bibitem{magma}W.~Bosma, J.~Cannon, C.~Playoust, The Magma algebra system. I. The user language, \textit{J. Symbolic Comput.} \textbf{24} (1997), 235--265. 

\bibitem{PJC}P.~J.~Cameron, \textit{Permutation groups}, London
  Mathematical Society, Student Texts \textbf{45}, Cambridge
  University Press, 1999.

\bibitem{ATLAS}J.~H.~Conway, R.~T.~Curtis, S.~P.~Norton, R.~A.~Parker,
  R.~A.~Wilson, \textit{Atlas of finite groups}, Clarendon Press, Oxford, 1985.

\bibitem{DM}J.~D.~Dixon, B.~Mortimer, \textit{Permutation Groups},
  Graduate Texts in Mathematics \textbf{163}, Springer-Verlag New
  York, 1991.

\bibitem{DGPS2}S.~Dolfi, R.~Guralnick, C.~E.~Praeger, P.~Spiga, Finite
  primitive permutation groups have at most two coprime subdegrees, preprint.

\bibitem{FZ}E.~Fisman, Z.~Arad, A proof of Szep's conjecture on
  nonsimplicity of certain finite groups, 
\textit{J. Algebra} \textbf{108} (1987), no. 2, 340--354.

\bibitem{GAP4}
  The GAP~Group, \emph{GAP -- Groups, Algorithms, and Programming, 
  Version 4.4.12}; 
  2008,
  \verb+(http://www.gap-system.org)+.

\bibitem{Gow}R.~Gow, Products of two involutions in classical groups of characteristic $2$, \textit{J. Algebra} \textbf{71} (1981), no. 2, 583--591.

\bibitem{Bob}R.~Guralnick, Cyclic quotients of transitive groups, \textit{J. Algebra} \textbf{234} (2000), 507--532.

\bibitem{GT}R.~Guralnick, P.~Tiep,  First Cohomology Groups of Chevalley
Groups in Cross Characteristic, \textit{Annals Math.} \textbf{174} (2011), 543--559.

\bibitem{HarTur}B.~Hartley, A.~Turull, On characters of coprime
  operator groups and the Glauberman character correspondence, 
\textit{J. Reine Angew. Math.} \textbf{451} (1994), 175--219.

\bibitem{Klemm}M.~Klemm, \"{U}ber die Reduktion von
  Permutationsmoduln, \textit{Math. Z.} \textbf{143} (1975), 113--117. 

\bibitem{Isaacs3}M.~Isaacs, Degrees of sums in a separable
field extension, \textit{Proc. Amer. Math. Soc.} \textbf{25} (1970), 638--641. 


\bibitem{Isaacs}M.~Isaacs, \textit{Finite group theory}, Graduate
  Studies in Mathematics \textbf{92}, American Mathematical Society,
  Providence, Rhode Island, USA, 2008. 

\bibitem{LPS1}M.~W.~Liebeck, C.~E.~Praeger, J.~Saxl, On the O'Nan-Scott
  theorem for finite primitive permutation
  groups, \textit{J. Austral. Math. Soc. Ser. A} \textbf{44} (1988),
  389--396.   

\bibitem{LPS}M.~W.~Liebeck, C.~E.~Praeger, J.~Saxl, \textit{The maximal
  factorizations of the finite simple groups and their automorphism
  groups}, Memoirs of the American Mathematical Society, Volume
  \textbf{86}, Nr \textbf{432}, Providence, Rhode Island, USA, 1990. 

\bibitem{LPS2}M.~W.~Liebeck, C.~E.~Praeger, J.~Saxl, On factorizations
  of almost simple groups, \textit{J. Algebra} \textit{185} (1996), 409--419.  

\bibitem{Liv}D.~Livingstone, A permutation representation of the
  Janko group, \textit{J.~Algebra} \textbf{6} (1967), 43--55.

\bibitem{Mortimer}B.~Mortimer, The modular permutation representations
  of the known doubly transitive groups, \textit{Proc. London
    Math. Soc (3)} \textbf{41} (1980), 1--20.

\bibitem{GN1}G.~Navarro, \textit{Characters and blocks of finite
  groups}, London Mathematical Society, Lecture Notes Series
  \textbf{250}, Cambridge Univerisy Press, 1998.

\bibitem{GN}G.~Navarro, personal communication with the first author.

\bibitem{Piter}P.~M.~Neumann, \textit{Topics in group theory and computation}:
  proceedings of a Summer School held at University College, Galway
  under the auspices of the Royal Irish Academy from 16th to 21st
  August, 1973. Edited by Michael P.~J.~Curran. 

\bibitem{Pr}C.~E.~Praeger, Finite quasiprimitive graphs, in Surveys in
  combinatorics, \textit{London Mathematical Society Lecture Note Series}, vol. 24 (1997), 65--85.

\bibitem{Suzuki}M.~Suzuki, \textit{Group theory I}, Springer-Verlag,
  New York, 1982. 

\bibitem{Won}M.~J.~Wonenburger,
Transformations which are products of two involutions,
\textit{J. Math. Mech.} \textbf{16} (1966), 327--338.

\bibitem{Yuster}T.~Yuster, Orbit sizes under automorphism actions in
  finite groups, \textit{J. Algebra} \textbf{82} (1983), 342--352.  
\end{document}